\DeclareFontFamily{OML}{cyr}{} \DeclareFontShape{OML}{cyr}{m}{n}{
  <5> <6> <7> <8> <9> gen * wncyr
  <10> <10.95> <12> <14.4> <17.28> <20.74> <24.88> wncyr10
  }{}
\DeclareSymbolFont{rusletters}{OML}{cyr}{m}{n}
\DeclareSymbolFontAlphabet{\rusmath}{rusletters}
\DeclareMathSymbol\re{\rusmath}{rusletters}{"03}
\newtheorem{theorem}{Theorem}[section]
\newtheorem{proposition}[theorem]{Proposition}
\newtheorem{lemma}[theorem]{Lemma}
\newtheorem{corollary}[theorem]{Corollary}
\newtheorem{remark}[theorem]{Remark}
\newcommand*{\Smbl}{\mathop{\rm Smbl}\nolimits}
\newcommand*{\Orb}{\mathop{\rm Orb}\nolimits}
\newcommand*{\G}{\mathop{\rm G\!}\nolimits} 
\newcommand*{\E}{\EuScript E} 
\newcommand*{\Y}{\EuScript Y} 
\newcommand*{\A}{\EuScript A} 
\newcommand*{\K}{\EuScript K} 
\newcommand*{\R}{\mathbb R} 
\newcommand*{\g}{\mathfrak g} 
\newcommand*{\eq}{y''=a^3(x,y)y'\,^3+a^2(x,y)y'\,^2+a^1(x,y)y'+a^0(x,y)}
\newcommand{\kch}{\mathbin{\rule{5pt}{0.5pt}\rule{0.5pt}{6pt}}\,} 
\title[Differential invariants]{Differential invariants of 2-order ODEs, I}
\author[V.A. Yumaguzhin]{Valeriy A. Yumaguzhin}
\date{29 November 2007}
\address{Program Systems Institute of RAS, Pereslavl'-Zalesskiy, 152020, Russia}
\email{yuma@diffiety.botik.ru}
\keywords{2-nd order ordinary differential equation, point transformation, equivalence problem, differential invariant}
\subjclass{53A55, 53C10, 53C15, 34A30, 34A26, 34C20, 58F35}
\begin{document}
\begin{abstract} In this paper, we investigate the action of pseudogroup of all point transformations on the natural bundle of equations
  $$
    \eq\,.
  $$
We construct differential invariants of this action and solve the equivalence problem for some classes of these equations in particular for generic equations.
\end{abstract}
\maketitle
%
\section{Introduction}
This paper is devoted to differential invariants and the equivalence problem 
of ordinary differential equations of the form
\begin{equation}\label{eq}
 \eq\,.
\end{equation}

There are different approaches to construct  differential invariants of these equations, 
see R. Liouville \cite{Lvll}, S. Lie \cite{Li1,Li2}, A. Tresse \cite{Trss}, E. Cartan \cite{Crtn}, G. Thomsen \cite{Thmsn}, and R.B. Gardner \cite{Grdnr}.
 
In \cite{Yum}, we presented an approach to this problem differing from above mentioned ones. In this paper, we state in detail this approach, construct tensor and scalar differential invariants in this way, and solve the equivalence problem for some classes of equations \eqref{eq}, in particular, for generic equations.

Briefly, our approach is as follows.
Every equation $\E$ of form \eqref{eq} can be considered as a geometric structure. To this end, we identify the equation $\E$ with the section
$$
  S_{\E}:(x,y)\mapsto(\,x,y,\,a^0(x,y),\,a^1(x,y),\,a^2(x,y),\,a^3(x,y)\,)
$$
of the product bundle $\pi:\R^2\times\R^4\longrightarrow\R^2$. Thus the set of all equations \eqref{eq} is identified with the set of all sections of $\pi$. It is well known, see \cite{Arnld}, that every point transformation of variables $x$ and $y$ transforms every equation \eqref{eq} to equation of the same form\footnote{2nd--order ODEs admit contact transformations. It is known, see \cite{Chrn}, that every two 2nd--order ODEs are locally contact equivalent. By this reason, we investigate the equivalence problem of  equations \eqref{eq} w.r.t. point transformations.}. It follows that every point transformation $f$ of the base of $\pi$ generates the transformation of sections of $\pi$. This means that $f$ can be lifted in the natural way to the diffeomorphism $f^{(0)}$ of the total space of $\pi$. Thus the bundle $\pi$ of equations \eqref{eq} is a natural bundle. Therefore equation \eqref{eq} considered as a section of $\pi$, is a geometric structure, see \cite{ALV}. 
By $\pi_k:J^k\pi\to\R^2$ denote the bundle of $k$--jets of sections of $\pi$, $k=1,2,\ldots$. Every lifted diffeomorphism $f^{(0)}$ is lifted in the natural way to the diffeomorphism $f^{(k)}$ of $J^k\pi$. 
The lifting of point transformations generates the natural lifting of every vector field $X$ in the base of $\pi$ to the vector field $X^{(k)}$ in $J^k\pi$. Suppose a lifted vector field $X^{(k)}$ passes through a point $\theta_k\in J^k\pi$. Then the value $X^{(k)}_{\theta_k}$ of this field at $\theta_k$ is defined by the $k+2$--jet $j_p^{k+2}X$ of the field $X$ at the point $p=\pi_k(\theta_k)$.
Let $\theta_{k+1}\in J^{k+1}\pi$. Then there exists a section $S$ of $\pi$ such that $\theta_{k+1}=j_p^{k+1}S$, where $p=\pi_{k+1}(\theta_{k+1})$. The section $S$ generates the section $j_kS$ of the bundle $\pi_k$ by the formula $j_kS:p\mapsto j_p^kS$. It is clear that $\theta_{k+1}$ is identified with the tangent space to the image of $j_kS$ at the point $\theta_k=j_p^kS$. We denote this tangent space by $\K_{\theta_{k+1}}$. Now we can introduce the following vector space of $k+2$--jets at $p$ of vector fields in the base passing through $p$\,:  
$$
  \A_{\theta_{k+1}}=\{\,j_p^{k+2}X\,|\,X^{(k)}_{\theta_k}\in\K_{\theta_{k+1}}\,\}\,.
$$
The spaces $\A_{\theta_{k+1}}$, $k=0,1,2,\ldots$, possess nontrivial properties. These properties allow us to construct in the natural way some geometric objects $\omega_{\theta_{k+1}}$ on the tangent space to the base at the point $p=\pi_{k+1}(\theta_{k+1})$. As a result, we obtain fields of these objects on $J^{k+1}\pi$
$$
  \theta_{k+1}\longmapsto\omega_{\theta_{k+1}}\,.
$$        
These fields are differential invariants of the considered equations w.r.t. point transformations. 

The pseudogroup of all point transformations of the base acts by the lifted diffeomorpisms on every $J^k\pi$. As a result, every $J^k\pi$ is divided into orbits of this action. The bundles $J^0\pi$ and $J^1\pi$ are orbits of this action. The bundle $J^2\pi$ is the union of two orbits: $\Orb_2^0$ and $\Orb_2^2$. First one is an orbit of codimension 0, the second one has codimension 2 and consists of 2--jets of sections $S_{\E}$ such that the equation $\E$ can be reduced to the linear form by a point transformation, see \cite{GYum, Yum}. 
The bundle $J^3\pi$ is the union of four orbits: an orbit $\Orb_3^0$ of codimension $0$, an orbit $\Orb_3^1$ of codimension 1, an orbit $\Orb_3^2$ of codimension 2, and the orbit of codimension 6 that is the inverse image of $\Orb_2^2$ over the natural projection $J^3\pi\to J^2\pi$. 

In this paper, we construct differential invariants and solve the equivalence problem for equations $\E$ satisfying the condition $j_3S_{\E}\subset\Orb_3^0$.
\smallskip

All manifolds and maps are smooth in this work. By $j_p^kf$ denote the $k$--jet of the map $f$ at the point $p$, $k=0,1,2,\ldots,\infty$, by $\R$ denote the field of real numbers, and by $\R^n$ denote the $n$--dimensional arithmetic space. We assume summation over repeated indexes in all formulas.

\section{The bundle of equations}
%
\subsection{Liftings of point transformations}
Consider the product bundle 
$$
  \pi:\R^2\times\R^4\longrightarrow\R^2,\quad\pi:(x^1,x^2,u^1,\ldots,u^4)\mapsto (x^1,x^2),
$$ 
where $x^1,x^2$ are the standard coordinates on the base of $\pi$ and $u^1$, $u^2$, $u^3$, $u^4$ are 
the standard coordinates on the fiber of $\pi$.

Let $\E$ be an arbitrary equation \eqref{eq}. We identify $\E$ with the section $S_{\E}$ of $\pi$ defined by the formula
$$
  S_{\E}(p)= \bigl(\,p,\,a^0(p),\,a^1(p),\,a^2(p),\,a^3(p)\,\bigr),
$$
where $p=(x^1,x^2)$. Clearly, this identification is a bijection between the set of all equations \eqref{eq} and the set of all sections of $\pi$.

Recall that a {\it point transformation of} $\R^2$ is a diffeomorphism of some open subset of $\R^2$ to $\R^2$. 

Every point transformation $f$ of $\R^2$ generates the transformation of $\E$ to the equation $\tilde\E$ of the same form, see \cite{Arnld}, 
$$  
  \tilde y''=\tilde a^3(\tilde x,\tilde y)\tilde y'\,^3
  +\tilde a^2(\tilde x,\tilde y)\tilde y'\,^2
  +\tilde a^1(\tilde x,\tilde y)\tilde y'
  +\tilde a^0(\tilde x,\tilde y)\,.
$$
The coefficients of $\tilde\E$ are expressed in terms of the coefficients of $\E$ and the $2$-jets of the inverse
transformation $f^{-1}$:
\begin{equation}\label{CffTrnsfrm}
 \tilde a^i(\tilde p)=\Phi^i\bigl(\,a^0(f^{-1}(\tilde p)),\ldots,a^3(f^{-1}(\tilde p)),\,
 j^2_{\tilde p}f^{-1}\,\bigr),\quad i=0,1,2,3.
\end{equation}
It follows that the equations
$$
  \tilde p=f(p),\quad \tilde u^i=\Phi^i\bigl(\,u^1,\ldots,u^4,\,j^2_{f(p)}f^{-1}\,\bigr),\quad i=1,2,3,4.
$$
define the diffeomorphism $f^{(0)}$ of the total space of $\pi$. It is easy to see that if $U$ is domain of $f$, then $f^{(0)}$ is defined on $\pi^{-1}(U')$, where $U'$ is the everywhere dense open subset of $U$. This diffeomorphism $f^{(0)}$ is called the {\it lifting of $f$ to the bundle $\pi$}. Obviously, the diagram
$$
  \begin{CD}
    E        @>f^{(0)}>> E\\
    @V\pi VV             @VV\pi V\\
    \R       @>>f>       \R
  \end{CD}
$$
is commutative (in the domain $\pi^{-1}(U')$ of $f^{(0)}$). 

Now equations \eqref{CffTrnsfrm} is represented in the terms of the transformation of the corresponding sections in the following way
$$
    S_{\tilde\E}=f^{(0)}\circ S_{\E}\circ f^{-1}.
$$

By $j_p^kS$ denote the the $k$--jet at $p$ of the section $S$ of $\pi$, $k=0,1,2,\ldots,\infty$. By
$$
  \pi_k:J^k\pi\longrightarrow\R^2,\quad\pi_k:j_p^kS\mapsto p\,,
$$
denote the bundle of all $k$--jets of sections of $\pi$. By $x^1$, $x^2$, $u^i_{\sigma}$, $i=1,\ldots,4$, $0\leq |\sigma|\leq k$, we denote the standard coordinates on $J^k\pi$, here $\sigma$ is the multi-index 
$\{j_1\ldots j_r\}$, $|\sigma|=r$, $j_1,\ldots,j_r=1,2$. By $\sigma j$ we denote the multi-index
$\{j_1\ldots j_r j\}$.
The natural projection
$$
  \pi_{k,\;r}:J^k\pi\longrightarrow J^r\pi\,,\quad\infty\geq k>r,
$$
is defined by $\pi_{k,\;r}(\,j_p^kS\,)=j_p^rS$. By $J_p^k\pi$ denote the fiber of the bundle $\pi_k$ over the point $p$, that is $J_p^k\pi=\pi_k^{-1}(p)$.
Every section $S$ of $\pi$ generates the section $j_kS$ of the
bundle $\pi_k$ by the formula
$$
  j_kS: p\;\mapsto\;j_p^kS.
$$

Every point transformation $f$ of the base of $\pi$ is lifted to the diffeomorphism $f^{(k)}$ of $J^k\pi$ by the formula
\begin{equation}\label{LftTr}
  f^{(k)}(\,j^k_pS\,)=j^k_{f(p)}\bigl(\,f^{(0)}\circ S\circ f^{-1}\,\bigr)\,.
\end{equation}
This diffeomorphism $f^{(k)}$ is called the {\it lifting of $f$ to the bundle $\pi_k$}
Obviously, for any $\infty\geq l>m$, the diagram
$$
  \begin{CD}
    J^l\pi         @>f^{(l)}>> J^l\pi\\
    @V\pi_{l,\;m}VV             @VV\pi_{l,\;m} V\\
    J^m\pi         @>>f^{(m)}> J^m\pi
  \end{CD}
$$
is commutative. Suppose $f$ and $g$ are point transformations of the base of $\pi$, then obviously,
$$
  (f\circ g)^{(k)}=f^{(k)}\circ g^{(k)},\;\;k=0,1,\ldots
$$
By $\Gamma$ we denote the pseudogroup of all point transformations of the base of $\pi$.  The pseudogroup $\Gamma$ acts on every $J^k\pi$ by the lifted transformations.

\subsection{Liftings of vector fields}
Let $X$ be a vector field in the base of $\pi$ and let $f_t$ be its flow. Then the flow $f_t^{(k)}$ in $J^k\pi$ defines the vector field $X^{(k)}$ in $J^k\pi$, which is called {\it the lifting of $X$ to $J^k\pi$}.

It follows from the definition:
$$
 (\pi_{l,\;m})_*\bigl(\,X^{(l)}\,\bigr)=X^{(m)}\,,\;\;\infty\geq l>m\geq -1\,,
$$
where $\pi_{l,\;-1}=\pi_l$ and $X^{(-1)}=X$, and
$$
 (f^{(k)})_*(X^{(k)})=(\,f_*(X)\,)^{(k)},\quad k=0,1,\ldots\,,
$$
where $f$ is an arbitrary point transformation of the base of $\pi$. 
\begin{proposition}\label{LieAlgIsm} The map $X\longmapsto X^{(k)}$ is a Lie algebra homomorphism of the algebra of all 
  vector fields in the base of $\pi$ to the algebra of all vector fields in $J^k\pi$.
\end{proposition}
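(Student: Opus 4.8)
The plan is to show that the assignment $X \mapsto X^{(k)}$ respects the two operations that define a Lie algebra homomorphism: $\R$-linearity (indeed linearity over constants) and compatibility with the bracket, $[X,Y]^{(k)} = [X^{(k)}, Y^{(k)}]$. Linearity is immediate from the definition via flows, since scaling a vector field scales its flow's time parameter and adding two vector fields whose flows commute infinitesimally corresponds to adding liftings; more cleanly, one notes that $X^{(k)}$ is defined pointwise as a jet-theoretic prolongation and that the prolongation formulas for the coefficients of $X^{(k)}$ in the standard coordinates $x^1,x^2,u^i_\sigma$ are linear in the partial derivatives of the components of $X$. So the substance of the proposition is the bracket identity.

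For the bracket, I would use the flow characterization together with the functoriality of the lifting already recorded in the excerpt. Recall that for a point transformation $f$ we have $(f^{(k)})_*(X^{(k)}) = (f_*X)^{(k)}$. Apply this with $f = f_t$, the flow of $Y$: then $(f_t^{(k)})_*(X^{(k)}) = ((f_t)_* X)^{(k)}$. Differentiating both sides at $t=0$: the left side gives $-[Y^{(k)}, X^{(k)}]$ (the Lie derivative of $X^{(k)}$ along $Y^{(k)}$, using that $t \mapsto f_t^{(k)}$ is the flow of $Y^{(k)}$), while the right side gives $(-[Y,X])^{(k)} = [X,Y]^{(k)}$ by the already-established linearity of the lifting. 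Hence $[X,Y]^{(k)} = [X^{(k)}, Y^{(k)}]$.

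The two facts this argument leans on are: (i) that $t \mapsto f_t^{(k)}$ really is the flow of the vector field $X^{(k)}$ when $f_t$ is the flow of $X$ — but this is exactly the definition of $X^{(k)}$ given in the preceding subsection, together with the group law $(f_s \circ f_t)^{(k)} = f_s^{(k)} \circ f_t^{(k)}$ which guarantees $\{f_t^{(k)}\}$ is a one-parameter group; and (ii) that differentiating a $t$-dependent family of the form $(\phi_t)_* Z$ at $t=0$, where $\{\phi_t\}$ is the flow of $W$, yields $-\mathcal{L}_W Z = -[W,Z]$, which is the standard formula for the Lie derivative of a vector field. I expect the main obstacle to be purely technical rather than conceptual: the lifted maps $f^{(k)}$ are only defined over the dense open set $\pi_k^{-1}(U')$ where $U'$ depends on $f$, so one must check that the flow $f_t^{(k)}$ is defined on a neighborhood of a given point $\theta_k$ for $t$ in a neighborhood of $0$, so that the differentiation at $t=0$ makes sense. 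This follows because, for fixed $\theta_k$ with $p = \pi_k(\theta_k)$, the domains $U_t'$ of $f_t$ shrink continuously to a full neighborhood of $p$ as $t \to 0$; equivalently, one can argue locally in coordinates using the explicit prolongation formulas for $X^{(k)}$, which are smooth (indeed polynomial in the jet coordinates $u^i_\sigma$ and in the derivatives of the components of $X$) and hence define an honest vector field on all of $J^k\pi$, whose flow is then automatically the prolonged flow by uniqueness of integral curves. Either way, once the domain issue is dispensed with, the identity drops out of functoriality and linearity as above.
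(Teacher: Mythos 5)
Your proof is correct and follows essentially the same route as the paper's: both derive $[X,Y]^{(k)}=[X^{(k)},Y^{(k)}]$ from the functoriality of the lifting under composition of point transformations together with the flow description of the Lie bracket. The only difference is packaging: you differentiate the recorded identity $(f_t^{(k)})_*(X^{(k)})=\bigl((f_t)_*X\bigr)^{(k)}$ at $t=0$, whereas the paper carries out the corresponding double-limit computation in $s$ and $t$ directly from $(f\circ g)^{(k)}=f^{(k)}\circ g^{(k)}$.
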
 
\begin{proof} See subsection \ref{SctnLieAlgIsm} of Appendix.
\end{proof} 

Recall the formulas describing lifted vector fields in the terms of the standard coordinates of $J^k\pi$, see \cite{KLV}, \cite{KV}.
Let $S$ be a section of $\pi$ defined in the domain of $X$, $p$ be a point of this domain, and $\theta_1=j^1_pS$. Then the vector-function $\psi_X$ defined by the formula
$$
  \psi_X(\theta_1)
  =\begin{pmatrix}
     \psi^1_X(\theta_1)\\
     \cdots\\
     \psi^4_X(\theta_1)\,.
  \end{pmatrix}
  =\frac{d}{dt}(\,f_t^{(0)}\circ S\circ
  f_t^{-1}\,)\Bigr|_{t=0}(p)
$$
is the {\it deformation velocity} of the section $S$ at the point $p$ under the action of the flow of $X$. Suppose
$$
  X=X^1\frac{\partial}{\partial x^1}
  +X^2\frac{\partial}{\partial x^2}\quad\text{and}\quad
  \theta_1=(p, u^i, u^i_j).
$$
Then 
\begin{equation}\label{DfrmtnVlst1}
  \psi_X(\theta_1)=
  \begin{pmatrix}
    -u^1_1X^1-u^1_2X^2\\
    - 2u^1X^1_1 + u^1X^2_2 - u^2X^2_1\\
    + X^2_{11}\vspace{.1in}\\
    -u^2_1X^1-u^2_2X^2\\
    - 3u^1X^1_2 - u^2X^1_1 - 2u^3X^2_1\\
    - X^1_{11} + 2X^2_{12}\vspace{.1in}\\
    -u^3_1X^1-u^3_2X^2\\
    - 2u^2X^1_2 - u^3X^2_2 - 3u^4X^2_1\\
    - 2X^1_{12} + X^2_{22}\vspace{.1in}\\
    -u^4_1X^1-u^4_2X^2\\
    -  u^3X^1_2 + u^4X^1_1 - 2u^4X^2_2\\
    - X^1_{22}
  \end{pmatrix},
\end{equation}
where $X^i_j=\displaystyle\frac{\partial X^i}{\partial x^j}(p)$ and $X^i_{j_1j_2}=\displaystyle\frac{\partial^2X^i}{\partial x^{j_1}\partial x^{j_2}}(p)$.
The vector field $X^{(\infty)}$ is described by the formula
$$
  X^{(\infty)}=X^1D_1+X^2D_2+\re_{\psi_X}, 
$$
where
\begin{equation}\label{Dj}
  D_j=\frac{\partial}{\partial x^j}+\sum_{|\sigma|=0}^{\infty}\sum_{i=1}^{4}u^i_{\sigma j}
  \frac{\partial}{\partial u^i_{\sigma}}\,,\quad j=1,2\,,
\end{equation}  
is the operator of total derivative w.r.t. $x^j$ and  
\begin{equation}\label{Re}
  \re_{\psi_X}=\sum_{|\sigma|=0}^{\infty}\sum_{i=1}^{4}D_{\sigma}\bigl(\,\psi^i_X\,\bigr)
  \frac{\partial}{\partial u^i_{\sigma}},\quad
  D_{\sigma} =D_{j_1}\circ\ldots\circ D_{j_r}.
\end{equation}  
The vector field $X^{(k)}$ is described by the formula
\begin{equation}\label{kPrlng}
  X^{(k)}=(\pi_{\infty,\;k})_*(X^{(\infty)})=X^1D_1^k+X^2D_2^k+\re_{\psi_X}^k,
\end{equation}
where
\begin{gather}
  D_j^k=\frac{\partial}{\partial x^j}
  +\sum_{|\sigma|=0}^k\sum_{i=1}^{4}
  u^i_{\sigma j}
  \frac{\partial}{\partial u^i_{\sigma}}\,,\quad j=1,2\,,\label{D(k)j}\\ 
  \re_{\psi_X}^k=\sum_{|\sigma|=0}^k
  \sum_{i=1}^{4}D_{\sigma}\bigl(\,\psi^i_X\,\bigr)\frac{\partial}{\partial u^i_{\sigma}}\,.\notag
\end{gather}

The following important statement is obvious.
\begin{proposition}\label{VlLftVctFld} Let $\theta_k\in J^k\pi$, $p=\pi_k(\theta_k)$, and $X^{(k)}$ be a lifted vector field passing through $\theta_k$. Then the value $X^{(k)}_{\theta_k}$ of $X^{(k)}$ at the point $\theta_k$ is defined by the $(2+k)$--jet $j_p^{2+k}X$ of the vector field $X$ at the point $p$.
\end{proposition}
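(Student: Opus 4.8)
The plan is to read the statement directly off the coordinate formula \eqref{kPrlng} for $X^{(k)}$, tracking at each step how many $x$–derivatives of the components $X^1,X^2$ of $X$ enter. I would first recall that $X^{(k)}$ is a genuine vector field on $J^k\pi$ — being the infinitesimal generator of the flow $f_t^{(k)}$ of diffeomorphisms of $J^k\pi$ — so that its value at $\theta_k$ is well defined and may be computed from
$$
  X^{(k)}=X^1D_1^k+X^2D_2^k+\re_{\psi_X}^k .
$$
Fix $\theta_k$, put $p=\pi_k(\theta_k)$, and let $\theta_1=\pi_{k,1}(\theta_k)$.

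The two total-derivative summands are immediate: the operators $D_1^k,D_2^k$ of \eqref{D(k)j} have coefficients built only from the jet coordinates, so their values at $\theta_k$ do not involve $X$ at all, while the scalar factors $X^1(p),X^2(p)$ depend only on $j^0_pX$. The substance is the evolutionary term $\re_{\psi_X}^k=\sum_{|\sigma|\le k}\sum_iD_\sigma(\psi^i_X)\,\partial/\partial u^i_\sigma$, and here the crucial input is \eqref{DfrmtnVlst1}: each component $\psi^i_X$ is a fixed polynomial in the fibre coordinates $u^a,u^a_b$ and in the numbers $X^a_b=\partial X^a/\partial x^b(p)$ and $X^a_{b_1b_2}=\partial^2X^a/\partial x^{b_1}\partial x^{b_2}(p)$; so, as a function of $\theta_1$ (equivalently, on the fibre $J^1_p\pi$), $\psi^i_X$ depends on $X$ only through $j^2_pX$. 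Applying a single total derivative $D_j$, see \eqref{Dj}, either differentiates a coefficient in $x^j$ — raising the order of the $x$–derivatives of the $X^a$ that appear by exactly one — or differentiates it in a fibre coordinate, which brings in no derivative of $X$; hence for $|\sigma|=r$ the function $D_\sigma(\psi^i_X)$ is determined, at points over $p$, by $j^{2+r}_pX$ together with the jet coordinates. Since $r\le k$ throughout the sum, every coefficient function of $X^{(k)}$ is, over $p$, determined by $j^{2+k}_pX$; evaluating these coefficients at $\theta_k$ and assembling the tangent vector therefore expresses $X^{(k)}_{\theta_k}$ in terms of $\theta_k$ and $j^{2+k}_pX$ alone.

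As the remark preceding the statement suggests, there is no real obstacle here; the single point needing care is the derivative count in the previous paragraph — that each total derivative raises the differentiation order of the $X^a$ by one and by no more — which is visible from the shape \eqref{Dj} of $D_j$. A coordinate-free variant of the argument is also available: by \eqref{LftTr}, $f_t^{(k)}(\theta_k)=j^k_{f_t(p)}\bigl(f_t^{(0)}\circ S\circ f_t^{-1}\bigr)$ for any section $S$ with $j^k_pS=\theta_k$, and since $f_t^{(0)}$ is constructed from $j^2 f_t^{-1}$ through \eqref{CffTrnsfrm}, the right-hand side depends only on $\theta_k$ and $j^{k+2}_pf_t$; differentiating at $t=0$ then gives the assertion.
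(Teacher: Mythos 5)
Your argument is correct and is exactly what the paper has in mind: the paper states this proposition as ``obvious'' immediately after the coordinate formulas \eqref{kPrlng}, \eqref{D(k)j} and \eqref{DfrmtnVlst1}, and your derivative count (each $\psi^i_X$ involves $j^2_pX$, and each total derivative $D_j$ raises the order of the $x$--derivatives of $X^a$ by exactly one) is the precise justification of that claim. Both the coordinate computation and the coordinate-free variant via \eqref{LftTr} are sound.
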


\section{Isotropy algebras and orbits}
%
\subsection{Jets of vector fields} 
In this subsection, we recall necessary notions concerning jets of vector fields, prolongations of subspaces, and Spencer's complexes 
, see \cite{BrnshtnRznfld}, and \cite{GllmnStrnbrg1}.

By $W_p$ we denote the Lie algebra of $\infty$--jets at $p\in\R^2$ of all vector fields defined in $\R^2$ in neighborhoods of $p$. Recall that the structure of Lie algebra on $W_p$ is defined by the following formulas
\begin{gather*}
  \lambda j^{\infty}_pX=j^{\infty}_p(\lambda X)\,,\quad
  j^{\infty}_pX+j^{\infty}_pY=j^{\infty}_p(X+Y)\,,\\
  \bigl[\,j^{\infty}_pX,j^{\infty}_pY\,\bigr]=j^{\infty}_p[X,Y]\quad
  \forall\,\lambda\in\R\,,\;\forall\,j^{\infty}_pX,\,j^{\infty}_pY\in W_p\,.
\end{gather*}
By $L_p^k\,,\;k=-1,0,1,2,\ldots$, we denote the subalgebra of $W_p$ defined by 
$$
  L_p^k=\bigl\{\,j^{\infty}_pX\in W_n\,\bigl|\,j^k_pX=0\,\bigr\}\,,\;k\geq 0\,,
  \quad L_p^{-1}=W_p\,.
$$ 
Obviously, $W_p/L_p^k$ is the vector space of all $k$-jets at $p$ of all vector fields passing through $p$. In particular, $W_p/L_p^0$ is the tangent space $T_p$ to $\R^2$ at $p$. We have the natural filtration 
\begin{equation}\label{Fltrtn}
  W_p=L_p^{-1}\supset L_p^0\supset L_p^1\supset\ldots\supset L_p^k\supset L_p^{k+1}\supset\ldots\,.
\end{equation}
By $\rho_{i,j}$, $i>j\geq 0$, we denote the natural projection 
$$
  \rho_{i,j}:W_p/L_p^i\to W_p/L_p^j\,,\quad\rho_{i,j}:j_p^iX\mapsto j_p^jX.
$$ 
It is easy to prove that 
$$
  [\,L_p^i\,,\;L_p^j\,]\,=\,L_p^{i+j}\,,\quad  i,j=-1,0,1,2,\ldots\,.
$$ 
It follows that the bracket operation on $W_p$ generates the Lie algebra structure on the vector space $L_p^0/L_p^k$
\begin{equation}\label{brkt0}
  [\,\cdot\,,\,\cdot\,]:\,L_p^0/L_p^k\times L_p^0/L_p^k\to L_p^0/L_p^k 
\end{equation}  
and generates the following maps\,: 
\begin{align}
  [\,\cdot\,,\,\cdot\,]:\,&W_p/L_p^k\times W_p/L_p^k\to W_p/L_p^{k-1}\,,\label{brckt1}\\
  [\,\cdot\,,\,\cdot\,]:\,&T_p\times L_p^k/L_p^{k+1}\to L_p^{k-1}/L_p^k\,.\label{brckt2}
\end{align}
The last map generates the isomorphism 
\begin{equation}\label{Ismrph}
  L_p^k/L_p^{k+1}\cong T_p\otimes S^k(T_p^*)\,.
\end{equation}
Let $g^k$ be a subspace of $L_p^{k-1}/L_p^k$. The subspace $(g^k)^{(1)}\subset L_p^k/L_p^{k+1}$ is defined by 
$$
  (g^k)^{(1)}=\bigl\{\,X\in L_p^k/L_p^{k+1}\,\bigl|\,[\,v\,,\,X\,]\in g^k\;\;\forall\;v\in T_p\,\bigr\}
$$ 
and is called {\it the 1-st prolongation of $g^k$}.
Assume that the sequence of subspaces $g^1\,,\;g^2\,,\;\ldots\,,\;g^i\,,\;\ldots$ satisfies to the property 
$[\,T_p\,,\;g^{i+1}\,]\subset g^i$. Then for every $g^i$, we have the Spencer's complex
\begin{equation}\label{SpncrCmplx}
  0\to g^i\xrightarrow{\partial_{i,0}}g^{i-1}\otimes T_p^*
  \xrightarrow{\partial_{i-1,1}}g^{i-2}\otimes\wedge^2 T_p^*
  \xrightarrow{\partial_{i-2,2}} 0\,,
\end{equation}
where the operators $\partial_{k,l} : g^k\otimes\wedge^l T_p^*\to
g^{k-1}\otimes\wedge^{l+1} T_p^*$ are defined in the following
way: every element $\xi\in g^k\otimes\wedge^l T_p^*$ can be
considered as an exterior form on $T_p$ with values in $g^k$, then
\begin{equation}\label{SpncrOpr}
  (\,\partial_{k,l}(\xi)\,)(v_1,\ldots,v_{l+1})
  =\sum_{i=1}^{l+1}(-1)^{i+1}[\,v_i\,,\;\xi(v_1,\ldots,\hat
  v_i,\ldots,v_{l+1})\,]\,.
\end{equation}

\subsection{Isotropy algebras}\label{SctnIstrAlg}
Let $\theta_k\in J^k\pi$ and $p=\pi_k(\theta_k)$. By $G_{\theta_k}$ we denote the {\it isotropy group} of $\theta_k$, 
$$
  G_{\theta_k}=\bigl\{\;j^{2+k}_pf\;\bigl|\;f\in\Gamma\,,\;
  f^{(k)}(\theta_k)=\theta_k\;\bigr\}.
$$
By $\g_{\theta_k}$ we denote the Lie algebra of $G_{\theta_k}$. It can be considered as a subalgebra of $L^0_p/L^{2+k}_p$,
\begin{equation}\label{IstrpAlgk}
  \g_{\theta_k}=\bigl\{\;j^{2+k}_pX\in L^0_p/L_p^{2+k}\;\bigl|\; X^{(k)}_{\theta_k}=0\;\bigr\}
\end{equation}
The algebra $\g_{\theta_k}$ is called the {\it isotropy algebra} of $\theta_k$.
From this definition and \eqref{kPrlng}, we get
\begin{proposition}\label{IstrpAlgkPr} Let $j^{2+k}_pX=(p,0,X^i_j,\ldots,X^i_{j_1\ldots j_{2+k}})$ in the standard coordinates. Then $j^{2+k}_pX\in\g_{\theta_k}$ iff $(0,X^i_j,\ldots,X^i_{j_1\ldots j_{2+k}})$ is a solution of the system of linear homogeneous algebraic equations
$$
  \bigl(\,D_{\sigma}(\,\psi^i_{X}\,)\,\bigr)(\theta_k)=0\,,\quad i=1,2,3,4\,,\;\; 0\leq|\sigma|\leq k\,.
$$
\end{proposition}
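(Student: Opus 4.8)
The plan is to read the claimed equivalence directly off the coordinate formula~\eqref{kPrlng} for the lifted field $X^{(k)}$ together with the description~\eqref{IstrpAlgk} of the isotropy algebra; after the right substitution the statement is essentially immediate. By~\eqref{IstrpAlgk}, since the jet $j^{2+k}_pX$ under consideration already lies in $L^0_p/L^{2+k}_p$, its membership in $\g_{\theta_k}$ is equivalent to the single condition $X^{(k)}_{\theta_k}=0$. So it remains to show that $X^{(k)}_{\theta_k}=0$ holds if and only if $\bigl(D_\sigma(\psi^i_X)\bigr)(\theta_k)=0$ for all $i=1,2,3,4$ and all $\sigma$ with $0\le|\sigma|\le k$.

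To do that I would first note that the hypothesis $j^{2+k}_pX=(p,0,X^i_j,\dots,X^i_{j_1\dots j_{2+k}})$ means exactly $X^1(p)=X^2(p)=0$. Substituting this into~\eqref{kPrlng} and expanding $X^{(k)}_{\theta_k}$ in the coordinate basis $\partial/\partial x^1,\ \partial/\partial x^2,\ \partial/\partial u^i_\sigma$ ($1\le i\le4$, $0\le|\sigma|\le k$) of $T_{\theta_k}J^k\pi$, one finds that the $\partial/\partial x^j$--components vanish (they equal $X^j(p)$), while for each $i$ and each $\sigma$ with $|\sigma|\le k$ the $\partial/\partial u^i_\sigma$--component is the value at $\theta_k$ of $X^1u^i_{\sigma1}+X^2u^i_{\sigma2}+D_\sigma(\psi^i_X)$, which collapses to $\bigl(D_\sigma(\psi^i_X)\bigr)(\theta_k)$ because $X^1$ and $X^2$ vanish at $p$. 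The only step deserving a remark is the top layer $|\sigma|=k$, where $D_\sigma(\psi^i_X)$ a priori involves order-$(k{+}1)$ coordinates: formula~\eqref{DfrmtnVlst1} shows $\psi^i_X$ is affine in the first-order coordinates with $\partial\psi^i_X/\partial u^l_m=-X^m\delta^l_i$, and iterating $D_j$ shows the entire dependence of $D_\sigma(\psi^i_X)$ on those top coordinates is carried by a factor $X^1$ or $X^2$. Since these vanish at $p$, the value $\bigl(D_\sigma(\psi^i_X)\bigr)(\theta_k)$ is well defined (this is precisely the phenomenon of Proposition~\ref{VlLftVctFld}), and the same cancellation makes $X^1u^i_{\sigma1}+X^2u^i_{\sigma2}+D_\sigma(\psi^i_X)$ descend to a function on $J^k\pi$.

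With this computation in place, $X^{(k)}_{\theta_k}$ has coordinates $\bigl(D_\sigma(\psi^i_X)\bigr)(\theta_k)$ in the $\partial/\partial u^i_\sigma$ directions and zero in the $\partial/\partial x^j$ directions; since the vectors $\partial/\partial x^j|_{\theta_k}$ and $\partial/\partial u^i_\sigma|_{\theta_k}$ form a basis of $T_{\theta_k}J^k\pi$, the vanishing of $X^{(k)}_{\theta_k}$ is equivalent to the simultaneous vanishing of all $\bigl(D_\sigma(\psi^i_X)\bigr)(\theta_k)$, which together with the first paragraph is the assertion. I expect the only genuine obstacle to be the small amount of care around the $|\sigma|=k$ layer just indicated; the remainder is a direct substitution using nothing beyond~\eqref{kPrlng},~\eqref{IstrpAlgk} and~\eqref{DfrmtnVlst1}.
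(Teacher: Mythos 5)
Your argument is correct and is exactly the verification the paper leaves implicit: the proposition is stated there as an immediate consequence of \eqref{IstrpAlgk} and \eqref{kPrlng}, with no written proof. Your substitution of $X^1(p)=X^2(p)=0$ into \eqref{kPrlng}, and your remark that the apparent dependence of the $|\sigma|=k$ components on order-$(k{+}1)$ coordinates is carried by factors of $X^1,X^2$ and hence vanishes, supply precisely the missing details.
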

The natural filtration \eqref{Fltrtn} generates the natural filtration of $\g_{\theta_k}$ 
$$
  \g_{\theta_k}=\g^1_{\theta_k}\supset\g^2_{\theta_k}\supset\ldots\supset\g^{2+k}_{\theta_k}\,,
$$
where 
$$
  \g^i_{\theta_k}=\g_{\theta_k}\cap L_p^{i-1}/L_p^{2+k},\quad i=1,2,\ldots,2+k.
$$ 
This filtration generates the graduate space 
$$
  \G\g_{\theta_k}= g_{\theta_k}^1\oplus g_{\theta_k}^2\oplus\ldots\oplus g_{\theta_k}^{k+2},
$$
where
$$
  g^i_{\theta_k}=\g^i_{\theta_k}/\g^{i+1}_{\theta_k},\; i=1,2,\ldots,k+1,\quad 
  g^{k+2}_{\theta_k}=\g^{k+2}_{\theta_k}.
$$ 
\subsubsection{Algebras $\g_{\theta_0}$}
Let $\theta_0\in J^0\pi$ and $p=\pi(\theta_0)$. The algebra $\g_{\theta_0}$ is a subalgebra of $L_p^0/L_p^2$. 
Suppose $j^2_pX=(p,0,X^i_j,X^i_{j_1j_2})\in L_p^0/L_p^2$ and $\theta_0=(p, u^1,\ldots, u^4)$ in the standard coordinates. Then from Proposition \ref{IstrpAlgkPr}, we get that  $\g_{\theta_0}$ is described by the system
\begin{equation}\label{IstrAlg0}
  \begin{aligned}
    &- 2u^1X^1_1 + u^1X^2_2 - u^2X^2_1 + X^2_{11}=0\\
    &- 3u^1X^1_2 - u^2X^1_1 - 2u^3X^2_1- X^1_{11} + 2X^2_{12}=0\\
    &- 2u^2X^1_2 - u^3X^2_2 - 3u^4X^2_1- 2X^1_{12} + X^2_{22}=0\\
    &-  u^3X^1_2 + u^4X^1_1 - 2u^4X^2_2- X^1_{22}=0\,.
  \end{aligned}
\end{equation} 
From this system, we obtain the natural filtration of $\g_{\theta_0}$ and the corresponding graduate space $\G\g_{\theta_0}$:
\begin{equation}\label{IstrAlg01}
  \g_{\theta_0}\supset g^2,\quad\G\g_{\theta_0}=L^0_p/L^1_p\oplus g^2\,,
\end{equation}
where  $g^2=g_{\theta_0}^2$ is independent of the point $\theta_0$ and is defined by the system
\begin{equation}\label{g2}
  X^2_{11}=0,\quad X^1_{11} - 2X^2_{12}=0,\quad 2X^1_{12} - X^2_{22}=0,\quad X^1_{22}=0\,.
\end{equation}
From the last system, we get 
\begin{align}
  \dim g^2&=2\label{Dim_g2}\,,\\
  (g^2)^{(1)}&=\{0\}\label{FstPrlng}\,.
\end{align}
Taking into account isomorphism \eqref{Ismrph}, we obtain that the tensors 
\begin{equation}\label{Gnrtrs_g2}
  \begin{aligned}
    e_1&=2\frac{\partial}{\partial x^1}\otimes (dx^1\odot dx^1)
      +\frac{\partial}{\partial x^2}\otimes (dx^1\odot dx^2)\,,\\
    e_2&=2\frac{\partial}{\partial x^2}\otimes (dx^2\odot dx^2)
     +\frac{\partial}{\partial x^1}\otimes (dx^1\odot dx^2)
  \end{aligned}
\end{equation}
form the base of the vector space $g^2$.

\subsubsection{Algebras $\g_{\theta_1}$}
Let $\theta_1\in J^1\pi$, $\theta_0=\pi_{1,0}(\theta_1)$, and $p=\pi_1(\theta_1)$. The algebra $\g_{\theta_1}$ is a subalgebra of $L_p^0/L_p^3$. It follows from Proposition \ref{IstrpAlgkPr} that  $\g_{\theta_1}$ is described by the system of linear homogeneous algebraic equations
\begin{gather*}
    \psi^i_{X}(\theta_0)=0\,,\quad D_1(\,\psi^i_{X}\,)(\theta_1)=0\,,\quad            
    D_2(\,\psi^i_{X}\,)(\theta_1)=0\,,\\
     i=1,2,3,4\,.
\end{gather*}
From this system, we obtain the natural filtration of $\g_{\theta_1}$ and the corresponding graduate space $\G\g_{\theta_1}$:
$$
  \g_{\theta_1}\supset\g_{\theta_1}^2\supset\{0\}\,,\quad\G\g_{\theta_1}=L^0_p/L^1_p\oplus g^2\oplus\{0\}\,,
$$
Thus the projection
\begin{equation}\label{FstPrlng1}
  \rho_{3,2}\bigl|_{\g_{\theta_1}}:\g_{\theta_1}\longrightarrow\g_{\theta_0}
\end{equation}
is an isomorphism.

\subsubsection{Algebras $\g_{\theta_2}$}
Let $\theta_2\in J^2\pi$ and $p=\pi_2(\theta_2)$. The algebra $\g_{\theta_2}$ is a subalgebra of $L_p^0/L_p^4$. 
Suppose $j^4_pX=(p,0,X^i_j,\ldots,X^i_{j_1\ldots j_4})\in L_p^0/L_p^4$ and $\theta_2=(p, u^i, u^i_j, u^i_{j_1j_2})$ in the standard coordinates. Applying computer algebra, we reduce the system of equations describing the algebra $\g_{\theta_2}$, see Proposition \ref{IstrpAlgkPr}, to a step-form. As a result, we obtain the natural filtration of   $\g_{\theta_2}$ and the corresponding graduate space $\G\g_{\theta_2}$:
\begin{equation}\label{GrIstrpAlg2}
  \g_{\theta_2}\supset\g_{\theta_2}^2\supset\{0\}\supset\{0\}\,,\quad
  \G\g_{\theta_2}=g^1_{\theta_2}\oplus g^2\oplus\{0\}\oplus\{0\}\,,
\end{equation}
where subalgebra $g^1_{\theta_2}\subset L_p^0/L_p^1$ is defined by the system of equations
\begin{equation}\label{g1}
 \begin{aligned}
  2F^1\cdot X^1_1+F^2\cdot X^2_1+F^1\cdot X^2_2 &=0\\
  F^2\cdot X^1_1+F^1\cdot X^1_2+2F^2\cdot X^2_2 &=0\,,
 \end{aligned}
\end{equation}
and
\begin{equation}\label{F1F2}
 \begin{aligned}
  F^1 &= 3u^1_{22}-2u^2_{12}+u^3_{11}\\
      &\phantom{=}+3u^4u^1_1-3u^3u^1_2+2u^2u^2_2-u^2u^3_1-3u^1u^3_2+6u^1u^4_1\,,\\
  F^2 &= u^2_{22}-2u^3_{12}+3u^4_{11}\\
      &\phantom{=} -3u^1u^4_2+3u^2u^4_1-2u^3u^3_1+u^3u^2_2+3u^4u^2_1-6u^4u^1_2\,.
 \end{aligned}
\end{equation}
From \eqref{GrIstrpAlg2} and \eqref{g1}, we get 
\begin{proposition}
 \begin{enumerate} 
  \item $\dim\g_{\theta_2}=4$ iff $\bigl(\,F^1(\theta_2),\,F^2(\theta_2)\,\bigr)\neq 0$.
  \item $\dim\g_{\theta_2}=6$ iff $F^1(\theta_2)=0$ and $F^2(\theta_2)=0$.
 \end{enumerate} 
\end{proposition}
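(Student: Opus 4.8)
The plan is to read off $\dim\g_{\theta_2}$ from the graded object in \eqref{GrIstrpAlg2} and thereby reduce the whole statement to a rank computation for the two linear equations \eqref{g1}. Since a filtered vector space and its associated graded have the same total dimension, \eqref{GrIstrpAlg2} gives
$$
  \dim\g_{\theta_2}=\dim g^1_{\theta_2}+\dim g^2+0+0=\dim g^1_{\theta_2}+2,
$$
where I use $\dim g^2=2$ from \eqref{Dim_g2}. Hence it is enough to prove that $\dim g^1_{\theta_2}=2$ exactly when $\bigl(F^1(\theta_2),F^2(\theta_2)\bigr)\neq 0$, and $\dim g^1_{\theta_2}=4$ otherwise.

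Next I would use that $g^1_{\theta_2}\subset L_p^0/L_p^1$ is, by construction, the solution space of the homogeneous system \eqref{g1} in the four unknowns $X^1_1,X^1_2,X^2_1,X^2_2$ that coordinatize $L_p^0/L_p^1$. Writing $F^1=F^1(\theta_2)$, $F^2=F^2(\theta_2)$ for the explicit functions of $\theta_2$ in \eqref{F1F2}, the coefficient matrix of \eqref{g1} is
$$
  M=\begin{pmatrix} 2F^1 & 0 & F^2 & F^1\\[2pt] F^2 & F^1 & 0 & 2F^2\end{pmatrix},
$$
so that $\dim g^1_{\theta_2}=4-\mathrm{rank}\,M$. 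The proposition thus reduces to the dichotomy $\mathrm{rank}\,M=0\iff F^1=F^2=0$, and $\mathrm{rank}\,M=2$ in all remaining cases.

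The only point requiring care is this dichotomy, and in particular ruling out $\mathrm{rank}\,M=1$. If $F^1=F^2=0$ then $M=0$ and the rank is $0$. If $(F^1,F^2)\neq 0$, I would note first that neither row of $M$ can vanish (a row of $M$ is zero only when $F^1=F^2=0$), and then that the two rows cannot be proportional: from $\text{Row}_1=\lambda\,\text{Row}_2$ the second and third entries force $\lambda F^1=0$ and $F^2=0$, whence $F^1=0$ as well, a contradiction, and symmetrically for $\text{Row}_2=\mu\,\text{Row}_1$; therefore $\mathrm{rank}\,M=2$. Combining this with the displayed identity gives $\dim\g_{\theta_2}=6$ in the first case and $\dim\g_{\theta_2}=4$ in the second; since the two cases are mutually exclusive and exhaustive and yield distinct values, both ``iff'' assertions follow. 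Beyond this short rank argument I do not anticipate any genuine obstacle.
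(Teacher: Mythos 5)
Your proposal is correct and follows essentially the same route as the paper: the paper simply asserts the proposition ``From \eqref{GrIstrpAlg2} and \eqref{g1}'', i.e.\ from the graded decomposition $\G\g_{\theta_2}=g^1_{\theta_2}\oplus g^2\oplus\{0\}\oplus\{0\}$ together with the defining system of $g^1_{\theta_2}$, which is exactly your reduction $\dim\g_{\theta_2}=\dim g^1_{\theta_2}+2$. Your explicit rank dichotomy for the coefficient matrix of \eqref{g1} is the detail the paper leaves implicit, and it checks out.
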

The conditions $F^1=0$ and $F^2=0$ can be considered as conditions for the coefficients of equation \eqref{eq}. 
The following statement is well-known, see \cite{Lvll}, \cite{Trss}, \cite{Crtn}, \cite{Thmsn}, \cite{GrssmThmpsnWlkns}, \cite{GYum}, and \cite{Yum}. 
\begin{proposition}\label{LnrstnCndtnPr} The conditions $F^1=0$ and $F^2=0$ are necessary and sufficient to exists a point transformation reducing equation \eqref{eq} to the linear form.
\end{proposition}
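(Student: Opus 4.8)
This statement is S. Lie's linearizability criterion, and I would derive it from the isotropy-algebra computation just completed, treating necessity and sufficiency separately. \emph{Necessity.} A short substitution into \eqref{F1F2} of the coefficients of an arbitrary linear equation (so $a^{3}\equiv a^{2}\equiv 0$, $a^{1}$ a function of $x$ alone, $a^{0}$ affine in $y$, whence along the corresponding section $u^3\equiv u^4\equiv 0$, $u^2_2\equiv u^2_{12}\equiv 0$, $u^1_{22}\equiv 0$) shows that $F^{1}\equiv F^{2}\equiv 0$ along $S_{\E}$. Next I would note that the locus $\Sigma=\{\theta_{2}\in J^2\pi\mid F^{1}(\theta_{2})=F^{2}(\theta_{2})=0\}$ is $\Gamma$-invariant: by the preceding proposition $\Sigma=\{\theta_{2}\mid\dim\g_{\theta_2}=6\}$, and the isotropy algebras at two points of one $\Gamma$-orbit are conjugate, so $\dim\g_{\theta_2}$ is constant on orbits and $\Sigma$ is a union of orbits. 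Hence, if a point transformation carries $\E$ to a linear equation $\tilde\E$, then the values of $j_2 S_{\E}$ lie in the $\Gamma$-orbit of the values of $j_2 S_{\tilde\E}$; since the latter lie in $\Sigma$ and $\Sigma$ is $\Gamma$-invariant, the values of $j_2 S_{\E}$ lie in $\Sigma$, i.e. $F^{1}\equiv F^{2}\equiv 0$ for the coefficients of $\E$.

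\emph{Sufficiency.} Conversely, assume $F^{1}\equiv F^{2}\equiv 0$ for the coefficients of \eqref{eq}; equivalently $j_2 S_{\E}(\R^2)\subset\Sigma$, and, by the orbit picture recalled in the Introduction, $\Sigma$ is exactly the single codimension-$2$ orbit $\Orb_2^2$ of $J^2\pi$, which contains the $2$-jets of the section of $y''=0$ (see \cite{GYum,Yum}). It remains to integrate this pointwise coincidence into one point transformation. I would argue that no higher obstruction arises: by the Introduction the preimage of $\Orb_2^2$ in $J^3\pi$ is again a single orbit (the one of codimension $6$), and, by \cite{GYum,Yum}, so are its preimages in every $J^k\pi$; thus the equivalence problem for a section taking values in $\Orb_2^2$ is of constant type along the whole tower, and the integration step of Cartan's equivalence method (a formal-integrability argument on the prolonged system) yields a point transformation identifying $S_{\E}$ with the section of $y''=0$, hence reducing $\E$ to linear form. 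Equivalently, one may invoke Lie's explicit construction of the linearizing change of variables, whose defining overdetermined PDE system is compatible precisely when $F^{1}=F^{2}=0$; see \cite{Lvll,Trss,Crtn,Thmsn}.

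The step I expect to be the genuine obstacle is this last one in the sufficiency direction. The dimension count for $\g_{\theta_2}$ together with the orbit structure delivers, at each point, a $2$-jet of a linearizing transformation essentially for free; promoting this to an honest point transformation requires showing that all higher-order compatibility conditions are automatically met — that the equivalence problem prolongs without new branching — and this is exactly the content supplied by the classical references cited above (or by a Cartan--K\"ahler / formal-integrability argument on the prolongation of the defining system).
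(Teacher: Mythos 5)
The paper offers no proof of Proposition \ref{LnrstnCndtnPr} at all: it is declared well known and referred to \cite{Lvll}, \cite{Trss}, \cite{Crtn}, \cite{Thmsn}, \cite{GrssmThmpsnWlkns}, \cite{GYum}, \cite{Yum}. Measured against that, your proposal is strictly more informative. The necessity half is complete and correct: with $u^3=u^4=0$, $u^2=b(x)$ and $u^1=c(x)y+d(x)$ every term of \eqref{F1F2} vanishes, and the locus $\{F^1=F^2=0\}$ is $\Gamma$-invariant either by your isotropy-dimension argument (the dimension of $\g_{\theta_2}$ is constant on orbits and equals $6$ exactly on this locus) or, equivalently, because $(F^1,F^2)$ turn out to be the components of the invariant form $\omega^2$ constructed in the next section. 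The sufficiency half correctly isolates the real difficulty --- integrating the pointwise orbit coincidence into an actual transformation --- but at that point you fall back on the same classical citations the paper uses, so your argument is no more self-contained than the paper's own treatment; it is, however, an accurate description of what those references do.

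If you want to close the sufficiency direction internally, the machinery is already in the paper: take $\tilde\E$ to be $y''=0$ and apply Theorem \ref{FntPDE} to the prolonged system $\Y(\E,\tilde\E)^{(1)}$. The symbol computation given later ($\Smbl_y\Y=g^2$ and $(g^2)^{(1)}=0$) shows the symbol of $\Y^{(1)}$ is already zero, so the only thing to verify is surjectivity of $\Y^{(2)}\to\Y^{(1)}$, and this is precisely where the hypothesis $F^1=F^2=0$ enters: it guarantees that all $2$-jets of $S_{\E}$ and of $S_{\tilde\E}$ lie in the single orbit $\Orb_2^2$ of Theorem \ref{OrbtThrm}, so every $3$-jet of transformation matching the $1$-jets extends to a $4$-jet matching the $2$-jets. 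That transitivity statement (fiberwise over $J^1\pi$) is the one step you should make explicit rather than cite; the rest of your outline is sound.
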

From this proposition, we get
\begin{corollary} Let $\E$ be equation \eqref{eq}. Then it can be reduced to the linear form by a point transformation   
 iff the isotropy algebra of every 2--jet of the section $S_{\E}$ is 6--dimensional.
\end{corollary}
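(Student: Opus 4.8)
The plan is to read off the corollary by combining the two propositions that immediately precede it. The key observation is that $F^1$ and $F^2$ of \eqref{F1F2} are functions on $J^2\pi$, and that the conditions ``$F^1=0$ and $F^2=0$'' appearing in Proposition~\ref{LnrstnCndtnPr} mean exactly the vanishing of the pull-backs of $F^1$ and $F^2$ along the section $j_2S_{\E}\colon p\mapsto j_p^2S_{\E}$ of $\pi_2$ over the whole domain of $\E$.

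First I would fix an arbitrary point $p$ in the domain of $\E$ and set $\theta_2=j_p^2S_{\E}\in J^2\pi$. By the proposition stated just before Proposition~\ref{LnrstnCndtnPr}, $\dim\g_{\theta_2}=6$ if and only if $F^1(\theta_2)=0$ and $F^2(\theta_2)=0$ (and otherwise $\dim\g_{\theta_2}=4$). Therefore the assertion ``$\g_{\theta_2}$ is $6$-dimensional for every $2$-jet $\theta_2$ of $S_{\E}$'' is equivalent to ``$F^1(j_p^2S_{\E})=0$ and $F^2(j_p^2S_{\E})=0$ for all $p$''. Substituting $u^i=a^i$, $u^i_j=\partial a^i/\partial x^j$, $u^i_{j_1j_2}=\partial^2a^i/\partial x^{j_1}\partial x^{j_2}$ into \eqref{F1F2}, this is precisely the condition ``$F^1=0$ and $F^2=0$'' expressed through the coefficients of \eqref{eq}. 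Now I would apply Proposition~\ref{LnrstnCndtnPr}, which says that this same condition is necessary and sufficient for the existence of a point transformation carrying \eqref{eq} into the linear form; chaining the two equivalences yields the corollary. As a consistency check, one then also sees that the $2$-jets of a linearizable $\E$ are exactly those lying in the codimension-$2$ orbit $\Orb_2^2$, on which $\dim\g_{\theta_2}=6$.

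I do not expect a genuine obstacle here: the statement is a direct logical combination of the two propositions, and the fact that point transformations are only locally (densely) defined is harmless because the corollary is a local statement. The one step deserving a moment's attention is the identification in the first paragraph --- namely that the coordinate expressions \eqref{F1F2} for $F^1$ and $F^2$ on $J^2\pi$, restricted along $j_2S_{\E}$, reproduce the classical Liouville--Tresse linearizability quantities referred to in Proposition~\ref{LnrstnCndtnPr}; this is settled by comparing formulas, both sides being explicit polynomial expressions in $a^0,\dots,a^3$ and their derivatives up to order two.
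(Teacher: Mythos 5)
Your proposal is correct and follows exactly the route the paper intends: the corollary is obtained by chaining the dimension dichotomy $\dim\g_{\theta_2}=6\Leftrightarrow F^1(\theta_2)=F^2(\theta_2)=0$ with Proposition~\ref{LnrstnCndtnPr}, which is precisely what the paper's one-line derivation (``From this proposition, we get'') amounts to. Your extra care in identifying the restriction of $F^1,F^2$ along $j_2S_{\E}$ with the classical linearizability conditions is a reasonable elaboration of the same argument, not a different approach.
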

From \eqref{g1} we get the system of equations defining the 1-st prolongation $(g_{\theta_2}^1)^{(1)}$ of the algebra $g_{\theta_2}^1$
$$
 \begin{aligned}
     &2F^1\cdot X^1_{11}+0\cdot X^1_{12}+0\cdot X^1_{22}+F^2\cdot X^2_{11}+F^1\cdot X^2_{12}+0\cdot X^2_{22}=0,\\
     &0\cdot X^1_{11}+2F^1\cdot X^1_{12}+0\cdot X^1_{22}+0\cdot X^2_{11}+F^2\cdot X^2_{12}+F^1\cdot X^2_{22}=0,\\
     &F^2\cdot X^1_{11}+F^1\cdot X^1_{12}+0\cdot X^1_{22}+0\cdot X^2_{11}+2F^2\cdot X^2_{12}+0\cdot X^2_{22}=0,\\
     &0\cdot X^1_{11}+F^2\cdot X^1_{12}+F^1\cdot X^1_{22}+0\cdot X^2_{11}+0\cdot X^2_{12}+2F^2\cdot X^2_{22}=0.
 \end{aligned}
$$
It is easy to prove now that
\begin{equation}\label{Dim_g1}
  \dim (g_{\theta_2}^1)^{(1)}=2.
\end{equation}

\subsubsection{Algebras $\g_{\theta_3}$}
Let $\theta_3\in J^3\pi$, $p=\pi_3(\theta_3)$, and $\theta_3=(p, u^i, u^i_j,\ldots, u^i_{j_1j_2j_3})$ in the standard coordinates. Applying computer algebra, we reduce the system of equations describing the algebra $\g_{\theta_3}$, see Proposition \ref{IstrpAlgkPr}, to a step-form. From the obtained system, we get
\begin{proposition}\label{AlgIstr3} $\dim\g_{\theta_3}=0$ iff $F^3(\theta_3)\neq 0$, where 
 \begin{equation}\label{F3}
  \begin{aligned}
   F^3&=F^2(F^1D_1F^2-F^2D_1F^1)-F^1(F^1D_2F^2-F^2D_2F^1)\\
      &\phantom{=}+(F^1)^3u^4-(F^1)^2F^2u^3+F^1(F^2)^2u^2-(F^2)^3u^1\,.
  \end{aligned}
 \end{equation}
\end{proposition}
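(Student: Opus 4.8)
The plan is to follow the same computer-algebra strategy already used for $\g_{\theta_2}$ and for the step-form reduction announced for $\g_{\theta_3}$. By Proposition~\ref{IstrpAlgkPr}, an element $j^5_pX=(p,0,X^i_j,\ldots,X^i_{j_1\ldots j_5})$ lies in $\g_{\theta_3}$ iff the vector of Taylor coefficients $(X^i_j,\ldots,X^i_{j_1\ldots j_5})$ solves the linear homogeneous system $\bigl(D_\sigma(\psi^i_X)\bigr)(\theta_3)=0$ for $i=1,\dots,4$ and $0\le|\sigma|\le 3$, i.e. $4(1+2+3+4)=40$ equations in the $40$ unknowns $X^i_j,X^i_{j_1j_2},X^i_{j_1j_2j_3},X^i_{j_1\dots j_4},X^i_{j_1\dots j_5}$ (counting $2,4,6,8,10$ components in degrees $1,\dots,5$). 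So $\dim\g_{\theta_3}=0$ is equivalent to this $40\times 40$ matrix $M(\theta_3)$ being nonsingular; the claim is that $\det M(\theta_3)$ vanishes exactly on the locus $F^3(\theta_3)=0$, so the content is that, modulo a nowhere-zero factor, $\det M$ equals a power of $F^3$.

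The key steps, in order. First I would reuse the known filtration data: from \eqref{GrIstrpAlg2} we already have $\g_{\theta_3}^2\subset\g_{\theta_2}^2=g^2$ with $\dim g^2=2$ and $(g^2)^{(1)}=\{0\}$ by \eqref{Dim_g2}--\eqref{FstPrlng}, so automatically $\g_{\theta_3}^3=\{0\}$ and hence $\g_{\theta_3}^j=\{0\}$ for $j\ge 3$; thus the only possibly-nonzero graded pieces are $g^1_{\theta_3}$ (sitting in $L^0_p/L^1_p$, a $2$-dimensional target) and $g^2_{\theta_3}\subset g^2$. Second, I would observe that $\g_{\theta_3}$ is a subalgebra of $\g_{\theta_2}$ for the underlying $\theta_2=\pi_{3,2}(\theta_3)$, so the constraints at jet-level $\le 3$ inherit equations \eqref{g1}; imposing additionally the level-$3$ equations $D_{\sigma}(\psi^i_X)(\theta_3)=0$ with $|\sigma|=3$ and then doing the Gauss elimination shows that the remaining $2$ equations on the degree-$1$ part, beyond \eqref{g1}, have coefficient matrix whose entries involve the total derivatives $D_jF^1, D_jF^2$ together with $u^1,\dots,u^4$; the nonvanishing of the relevant $2\times2$ (or the combined system) determinant is precisely $F^3\ne 0$. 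Third, in the complementary case: if $F^1(\theta_2)=F^2(\theta_2)=0$ then $\dim\g_{\theta_2}=6$ and one checks directly from \eqref{F3} that $F^3=0$ as well, so the statement holds vacuously there; hence one may assume $(F^1,F^2)\ne 0$, where $g^1_{\theta_2}$ is already $0$-dimensional as a space but $g^1_{\theta_3}\subset g^1_{\theta_2}$ need not be, and it is the extra level-$3$ data that forces $g^1_{\theta_3}=0$ exactly when $F^3\ne0$. Finally, assembling: $\dim\g_{\theta_3}=\dim g^1_{\theta_3}+\dim g^2_{\theta_3}$, and the same elimination shows $g^2_{\theta_3}=0$ once the degree-$1$ part is killed (because the degree-$2$ equations then become the full-rank system already exhibited, cf.\ the prolongation computation yielding \eqref{Dim_g1} and \eqref{FstPrlng}), so $\dim\g_{\theta_3}=0\iff g^1_{\theta_3}=0\iff F^3(\theta_3)\ne0$.

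The main obstacle is the middle step: verifying, after the Gaussian reduction of the $40\times40$ system to step-form, that the scalar obstruction to solvability collapses to exactly the combination \eqref{F3} and not merely to something cutting out the same zero locus — i.e.\ getting the precise polynomial identity, including that the cofactor is nowhere zero, rather than just a generic-rank statement. This is the part genuinely delegated to computer algebra; by hand one would at best organize it by first using \eqref{g1} to eliminate two of the $X^i_j$, then expressing the level-$2$ and level-$3$ equations in the reduced variables, and tracking how $F^1D_jF^2-F^2D_jF^1$ and the cubic-in-$u$ term in \eqref{F3} emerge from the total-derivative operators $D_1,D_2$ acting through \eqref{F1F2}. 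I would also double-check the invariance claim implicitly needed for consistency — that $\dim\g_{\theta_3}$ is a $\Gamma$-invariant of $\theta_3$, which is immediate since $f^{(3)}$ conjugates isotropy algebras — so that the vanishing locus of $F^3$ is well-defined as a subset of $J^3\pi$, consistent with the orbit decomposition described in the introduction.
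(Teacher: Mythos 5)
Your overall strategy is the same as the paper's: the paper's entire proof consists of the sentence ``applying computer algebra, we reduce the system of equations describing $\g_{\theta_3}$ (Proposition \ref{IstrpAlgkPr}) to a step-form,'' and your proposal is exactly that reduction, organized by the filtration of $\g_{\theta_3}$. The scaffolding you add --- that $g^{j}_{\theta_3}=\{0\}$ for $j\geq 3$ because $(g^2)^{(1)}=\{0\}$, so only $g^1_{\theta_3}$ and $g^2_{\theta_3}$ can survive, and that the genuinely new content is the rank of the order-$3$ block of the linear system --- is correct and consistent with \eqref{GrIstrpAlg2}.

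There are, however, two concrete slips. First, you assert that when $(F^1,F^2)\neq 0$ ``$g^1_{\theta_2}$ is already $0$-dimensional,'' which contradicts both the paper (system \eqref{g1} is two independent equations on the four unknowns $X^i_j$, so $\dim g^1_{\theta_2}=2$ and $\dim\g_{\theta_2}=2+2=4$) and your own next clause ``but $g^1_{\theta_3}\subset g^1_{\theta_2}$ need not be'' --- a subspace of $\{0\}$ is $\{0\}$. Presumably you meant $\dim g^1_{\theta_2}=2$; the role of the order-$3$ equations is to cut this $2$-dimensional space, together with the $2$-dimensional $g^2_{\theta_3}\subset(g^1_{\theta_2})^{(1)}$ (cf. \eqref{Dim_g1}), down to zero precisely when $F^3\neq 0$. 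Second, the case $F^1=F^2=0$ is not ``vacuous'': there $F^3=0$, so the ``only if'' direction of the proposition obliges you to exhibit a nonzero element of $\g_{\theta_3}$, i.e.\ to check that the order-$3$ equations do not annihilate all of the $6$-dimensional $\g_{\theta_2}$. This is true (such $\theta_3$ lie over $\Orb_2^2$, the linearizable locus, whose equations carry the $8$-dimensional $\mathfrak{sl}(3,\R)$-type point symmetry algebra), but it is a claim to be verified, not a vacuity. A minor arithmetic point: your parenthetical count ``$2,4,6,8,10$ components in degrees $1,\dots,5$'' sums to $30$, not to the $40$ you correctly use (the counts are $4,6,8,10,12$). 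None of this changes the fact that the decisive identity --- that the step-form obstruction is exactly the polynomial \eqref{F3} and not merely something with the same zero locus --- is, in your write-up as in the paper, delegated to machine computation.
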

The function $F^3$ is a coefficient of some differential invariant of the action $\Gamma$ on $J^3\pi$. Bellow, we will construct this invariant. First, it was obtained in a different way by R. Liouville in \cite{Lvll}. 

\subsection{Orbits} 
In this section we describe some orbits of the actions of the pseu\-do\-group $\Gamma$ on the bundles $J^k\pi$, $k=0,1,2,3$.

By $\Orb(\theta_k)$ we denote the orbit of the action of $\Gamma$ on $J^k\pi$ passing through $\theta_k\in J^k\pi$. It is clear that $\Gamma$ acts transitively on the base of $\pi$. Hence $\Orb(\theta_k)$ can be  reconstructed by the intersection $\Orb(\theta_k)\cap J_p^k\pi$, where $J_p^k\pi$ is the fiber of $\pi_k$ over an arbitrary point $p$ of the base. Let $\Gamma_p$ be the subgroup of $\Gamma$ consisting of all transformations preserving $p$. The subgroup $\Gamma_p$ acts on the fiber $J_p^k\pi$ and $\Orb(\theta_k)\cap J_p^k\pi$ is an orbit of this action. Taking into account the previous descriptions of the algebras $\g_{\theta_0}$, $\g_{\theta_1}$, $\g_{\theta_2}$, and $\g_{\theta_3}$, we can prove now the following theorem
\begin{theorem}\label{OrbtThrm}
 \begin{enumerate}
  \item $J^k\pi$, $k=0,1$, is an orbit of the action of $\Gamma$,
  \item $J^2\pi$ is the union of two orbits of the action of $\Gamma$, $\Orb_2^0$ and $\Orb_2^2$.
   \begin{enumerate} 
    \item $\Orb_2^0$ is a generic orbit, which is described by the inequality
     $$
      (\,F^1,\;F^2\,)\neq 0\,,
     $$  
     where $F^1$ and $F^2$ are defined by \eqref{F1F2}.
      \item $\Orb_2^2$ is a degenerate orbit of codimension $2$, which is described as a submanifold of $J^2\pi$ by the            equations:
       $$
        F^1=0,\quad F^2=0\,.
       $$
   \end{enumerate}
  \item $J^3\pi$ is a union of some orbits of the action of $\Gamma$. One of these orbits $\Orb_3^0$ is a generic orbit,    which is described by the inequality
   $$
    F^3\neq 0\,,
   $$
   where $F^3$ is defined by \eqref{F3}.
 \end{enumerate}
\end{theorem}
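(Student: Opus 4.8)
The plan is to read off the entire orbit structure from the dimensions of the isotropy algebras computed in the previous subsections, via the standard relation between isotropy and orbit codimension. Fix $\theta_k\in J^k\pi$ and $p=\pi_k(\theta_k)$. By Proposition \ref{VlLftVctFld} the value $X^{(k)}_{\theta_k}$ depends only on $j^{2+k}_pX$, so the infinitesimal action of $\Gamma$ at $\theta_k$ is the linear map
$$
  ev_{\theta_k}:W_p/L_p^{2+k}\longrightarrow T_{\theta_k}J^k\pi,\qquad j^{2+k}_pX\longmapsto X^{(k)}_{\theta_k}.
$$
Its image is $T_{\theta_k}\Orb(\theta_k)$ (the orbit is an immersed submanifold whose tangent space is spanned by the lifted fields, which form a Lie algebra by Proposition \ref{LieAlgIsm}), and its kernel is exactly $\g_{\theta_k}$: if $X^{(k)}_{\theta_k}=0$ then $(\pi_k)_*X^{(k)}_{\theta_k}=X(p)=0$, hence $j^{2+k}_pX\in L^0_p/L^{2+k}_p$, and then $j^{2+k}_pX\in\g_{\theta_k}$ by \eqref{IstrpAlgk}. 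Since $\dim W_p/L_p^{2+k}=(k+3)(k+4)$ and $\dim J^k\pi=2+2(k+1)(k+2)$, rank--nullity gives
$$
  \codim\Orb(\theta_k)=\dim J^k\pi-\bigl(\dim W_p/L_p^{2+k}-\dim\g_{\theta_k}\bigr)=k^2-k-6+\dim\g_{\theta_k}.
$$
When this vanishes $ev_{\theta_k}$ is surjective, so $\Orb(\theta_k)$ is open in $J^k\pi$.

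\textbf{Step 2: inserting the known dimensions.} By \eqref{IstrAlg01}, \eqref{Ismrph}, \eqref{Dim_g2} one has $\dim\g_{\theta_0}=\dim(L^0_p/L^1_p)+\dim g^2=4+2=6$, and \eqref{FstPrlng1} gives $\dim\g_{\theta_1}=\dim\g_{\theta_0}=6$; hence $\codim\Orb(\theta_k)=0$ for $k=0,1$. As computed above, $\dim\g_{\theta_2}=4$ on $\{(F^1,F^2)\neq 0\}$ and $\dim\g_{\theta_2}=6$ on $\{F^1=F^2=0\}$, so $\codim\Orb(\theta_2)$ is $0$ on the first set and $2$ on the second. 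Finally Proposition \ref{AlgIstr3} gives $\dim\g_{\theta_3}=0$ on $\{F^3\neq 0\}$, so $\codim\Orb(\theta_3)=0$ there.

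\textbf{Step 3: assembling the orbits for $k\le 2$.} A connected manifold that is a disjoint union of open orbits is a single orbit. Since $J^0\pi$ and $J^1\pi$ are connected and, by Step 2, unions of open orbits, each is a single orbit, which is assertion (1). For (2)(a), $U=\{(F^1,F^2)\neq 0\}$ is $\Gamma$--invariant (its complement is the linearizability locus, invariant by Proposition \ref{LnrstnCndtnPr}) and, by Step 2, a union of open orbits; it is connected because $u^3_{11}$ occurs in $F^1$ with coefficient $1$ and not in $F^2$, while $u^4_{11}$ occurs in $F^2$ with coefficient $3$ and not in $F^1$, so $dF^1\wedge dF^2\neq 0$ everywhere, $\{F^1=F^2=0\}$ is a closed submanifold of codimension $2$ of the connected manifold $J^2\pi$, and hence $U$ is connected; thus $U=\Orb_2^0$. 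For (2)(b), $\Orb_2^2=\{F^1=F^2=0\}$ is $\Gamma$--invariant and, obtained by solving those two equations for $u^3_{11}$ and $u^4_{11}$, is a graph, hence diffeomorphic to a Euclidean space and connected; by Step 2 it is a union of orbits of codimension $2$ in $J^2\pi$, i.e. of codimension $0$ in $\Orb_2^2$, so it is a single orbit of codimension $2$.

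\textbf{Step 4: the case $k=3$, and the main obstacle.} The set $\{F^3\neq 0\}$ is open, nonempty, $\Gamma$--invariant (it coincides with $\{\dim\g_{\theta_3}=0\}$), and by Step 2 a union of open orbits; what remains --- and is the genuinely delicate point --- is that it is connected. On the fibre of $\pi_{3,2}$ over a point of the connected set $\Orb_2^0$, $F^3$ restricts to a non-constant affine function of the third-order coordinates, so its non-vanishing locus there is a union of two half-spaces, and one must show these are joined globally. The cleanest route is to use that $F^3$ is the coefficient of a relative differential invariant of odd weight (part of the invariant constructed later in the paper; cf. \cite{Lvll}), so that an orientation--reversing point transformation sends $F^3$ to $-F^3$ times a positive factor and interchanges the two sign components. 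Granting this, $\{F^3\neq 0\}=\Orb_3^0$ is a single open orbit, giving (3). In sum, the dimension count of Steps 1--2 carries the theorem, and the only labour beyond the isotropy--algebra computations already performed is the connectedness argument concentrated in Step 4.
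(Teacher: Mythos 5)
Your proposal is correct and follows exactly the route the paper intends: the paper gives no written proof beyond the remark that the theorem follows from "the previous descriptions of the algebras $\g_{\theta_0},\dots,\g_{\theta_3}$", and your Steps 1--2 are precisely that dimension count (your formula $\codim\Orb(\theta_k)=k^2-k-6+\dim\g_{\theta_k}$ reproduces the paper's $N_k$ column). The connectedness arguments of Steps 3--4 — in particular the observation that $\{F^3>0\}$ and $\{F^3<0\}$ are exchanged by an orientation-reversing transformation because $F^3$ is the coefficient of the odd-weight relative invariant $\nu=F^3(dx^1\wedge dx^2)^5$ — are details the paper leaves entirely implicit, and they are supplied correctly here.
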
 

\section{Spaces ${\bf \A_{\theta_{k+1}}}$}
In this section, we introduce a vector space $\A_{\theta_{k+1}}$ which is a basic notion of our approach to construct differential invariants.

Let $\theta_{k+1}\in J^{k+1}\pi$, $p=\pi_{k+1}(\theta_{k+1})$, and $S$ be a section of $\pi$ such that $j^{k+1}_pS=\theta_{k+1}$. Then $\theta_{k+1}$ is identified with the tangent space to the image of the section $j_kS$ at the point $\theta_k=j_p^kS$. We denote this tangent space by $\K_{\theta_{k+1}}$. Obviously, in the standard coordinates,
$$
  \K_{\theta_{k+1}}=\langle\,D_1^k\bigl|_{\theta_k},\,D_2^k\bigl|_{\theta_k}\,\rangle,
$$   
where  $D_1^k$ and $D_2^k$ are the operators of total derivatives w.r.t. $x^1$ and $x^2$ respectively, see formula \eqref{D(k)j}. 

Now we can introduce the vector space $\A_{\theta_{k+1}}$,
\begin{equation}\label{IstrpSpc}
  \A_{\theta_{k+1}}=\bigl\{\;j_p^{2+k}X\in W_p/L_p^{2+k}\;\bigr|
  \;X^{(k)}_{\theta_k}\in\K_{\theta_{k+1}}\;\bigr\}\,.
\end{equation}

From this definition and \eqref{kPrlng}, we get
\begin{proposition}\label{IstrpSpcDscrb} Let $j^{2+k}_pX=(p,X^i,X^i_j,\ldots,X^i_{j_1\ldots j_{2+k}})$ in the standard     coordinates. Then $j^{2+k}_pX\in\A_{\theta_{k+1}}$ iff $(X^i,X^i_j,\ldots,X^i_{j_1\ldots j_{2+k}})$ is a solution of   
  the system of linear homogeneous algebraic equations 
  $$
    \bigl(\,D_{\sigma}(\,\psi^i_{X}\,)\,\bigr)(\theta_{k+1})=0\,,\quad i=1,2,3,4\,,\;\; 0\leq|\sigma|\leq k\,.
  $$
\end{proposition}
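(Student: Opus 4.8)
The plan is to mirror the proof of Proposition \ref{IstrpAlgkPr}, the only new point being that the condition ``$X^{(k)}_{\theta_k}=0$'' occurring there is here weakened to ``$X^{(k)}_{\theta_k}\in\K_{\theta_{k+1}}$'', so that one must kill not all of $X^{(k)}_{\theta_k}$ but only its component transverse to $\K_{\theta_{k+1}}$. First I would fix a section $S$ of $\pi$ with $j^{k+1}_pS=\theta_{k+1}$, so that $\theta_k=j^k_pS$, and record the direct sum decomposition
$$
  T_{\theta_k}J^k\pi=\K_{\theta_{k+1}}\oplus V_{\theta_k},
$$
where $V_{\theta_k}$ is the tangent space to the fibre $J_p^k\pi$, spanned by the vectors $\partial/\partial u^i_\sigma\big|_{\theta_k}$ with $i=1,\ldots,4$ and $0\le|\sigma|\le k$. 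This is genuinely a direct sum: $(\pi_k)_*$ annihilates $V_{\theta_k}$ and maps $\K_{\theta_{k+1}}=\langle D^k_1\big|_{\theta_k},\,D^k_2\big|_{\theta_k}\rangle$ isomorphically onto $T_p\R^2$ (surjectivity is clear from \eqref{D(k)j}, and both spaces are two-dimensional), so $\K_{\theta_{k+1}}\cap V_{\theta_k}=0$.

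Next I would feed formula \eqref{kPrlng} into this decomposition. Writing $X=X^1\partial/\partial x^1+X^2\partial/\partial x^2$, evaluating $X^{(k)}=X^1D^k_1+X^2D^k_2+\re_{\psi_X}^k$ at $\theta_k$ gives
$$
  X^{(k)}_{\theta_k}=X^1(p)\,D^k_1\big|_{\theta_k}+X^2(p)\,D^k_2\big|_{\theta_k}+\re_{\psi_X}^k\big|_{\theta_k},
$$
and, since $\re_{\psi_X}^k$ has by definition the coefficient $D_\sigma(\psi^i_X)$ along $\partial/\partial u^i_\sigma$ for each $|\sigma|\le k$,
$$
  \re_{\psi_X}^k\big|_{\theta_k}=\sum_{|\sigma|=0}^k\sum_{i=1}^4\bigl(D_\sigma\psi^i_X\bigr)(\theta_{k+1})\,\frac{\partial}{\partial u^i_\sigma}\Big|_{\theta_k},
$$
which lies in $V_{\theta_k}$, whereas the first two terms lie in $\K_{\theta_{k+1}}$ by its definition. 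By uniqueness of the decomposition, these are precisely the $\K_{\theta_{k+1}}$- and $V_{\theta_k}$-components of $X^{(k)}_{\theta_k}$; hence $X^{(k)}_{\theta_k}\in\K_{\theta_{k+1}}$ if and only if $\re_{\psi_X}^k\big|_{\theta_k}=0$, and, the vectors $\partial/\partial u^i_\sigma\big|_{\theta_k}$ being linearly independent, this is in turn equivalent to $\bigl(D_\sigma\psi^i_X\bigr)(\theta_{k+1})=0$ for all $i=1,2,3,4$ and $0\le|\sigma|\le k$; by \eqref{IstrpSpc} this is exactly the assertion. It only remains to observe that these equations are linear and homogeneous in $(X^i,X^i_j,\ldots,X^i_{j_1\ldots j_{2+k}})$: by \eqref{DfrmtnVlst1} each $\psi^i_X$ is linear in the $2$-jet coordinates of $X$, each $D_\sigma$ is a linear operator, and, by Proposition \ref{VlLftVctFld}, the value $\bigl(D_\sigma\psi^i_X\bigr)(\theta_{k+1})$ with $|\sigma|\le k$ depends only on $j^{2+k}_pX$, so linearity propagates.

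The one delicate point — and the reason the evaluation here is at $\theta_{k+1}$ rather than at $\theta_k$, in contrast with Proposition \ref{IstrpAlgkPr} — is the bookkeeping of jet orders. For $|\sigma|=k$ the coefficient $D_\sigma\psi^i_X$ of $\re_{\psi_X}^k$ genuinely involves the order-$(k+1)$ coordinates, with coefficients $\pm X^1(p),\pm X^2(p)$ which in the present situation do \emph{not} drop out, since in \eqref{IstrpSpc} the field $X$ is not required to vanish at $p$. Thus each of $X^1D^k_1+X^2D^k_2$ and $\re_{\psi_X}^k$ separately involves those order-$(k+1)$ coordinates, while their sum assembles into the well-defined vector field $X^{(k)}$ on $J^k\pi$; this cancellation is already contained in the derivation of \eqref{kPrlng}, and it is what guarantees that evaluating along any lift $\theta_{k+1}$ of $\theta_k$ reproduces $X^{(k)}_{\theta_k}$ correctly. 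I expect this to be the only real obstacle, and it is one of careful exposition rather than of substance: the algebraic content of the statement is immediate from \eqref{kPrlng}.
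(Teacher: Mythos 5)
Your argument is correct and is essentially the paper's own (implicit) proof: the author states the proposition as an immediate consequence of the definition \eqref{IstrpSpc} and formula \eqref{kPrlng}, and your splitting $T_{\theta_k}J^k\pi=\K_{\theta_{k+1}}\oplus V_{\theta_k}$ with the identification of the $\K$- and $V$-components of $X^{(k)}_{\theta_k}$ is exactly the bookkeeping that makes this immediate. Your closing remark on why the evaluation must be at $\theta_{k+1}$ (the order-$(k+1)$ coordinates entering $X^1D^k_1+X^2D^k_2$ and $\re^k_{\psi_X}$ separately but cancelling in their sum) is accurate and worth having spelled out.
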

It follows from definition \eqref{IstrpAlgk} of the isotropy algebra $\g_{\theta_k}$ that 
$$
  \g_{\theta_k}\subset\A_{\theta_{k+1}}\;\;
  \forall\;\theta_{k+1}\in\pi_{k+1,k}^{-1}(\theta_k)\;\;
  \forall\;\theta_k\in J^k\pi\,.
$$
From the definition of $\A_{\theta_{k+1}}$, we get that
$$
  \rho_{k+2,k+1}(\A_{\theta_{k+1}})\subset \A_{\theta_k}\,.
$$

Let $f$ be a point transformation of the base of $\pi$ and let $p$ be a point of the domain of $f$. The tangent map 
$f_*:T_p\to T_{f(p)}$ generates the map 
$$
  j_p^{k+3}f:W_p/L_p^{k+2}\longrightarrow W_{f(p)}/L_{f(p)}^{k+2}\,,\quad
  j_p^{k+3}f:j_p^{k+2}X\mapsto j_{f(p)}^{k+2}\bigl(f_*(X)\bigr)\,.
$$ 
\begin{proposition}\label{TrnsfrmIstrpSpc} Let $\theta_{k+1}$ be a point of the domain of $f^{(k+1)}$. Then 
  $$
   j_p^{k+3}f(\A_{\theta_{k+1}})=\A_{f^{(k+1)}(\theta_{k+1})}\,.
  $$         
\end{proposition}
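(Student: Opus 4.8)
## Proof Proposal for Proposition \ref{TrnsfrmIstrpSpc}

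The plan is to reduce the statement to two ingredients: the naturality of the lifting of vector fields, already recorded in the subsection on liftings of vector fields in the form $(f^{(k)})_*(X^{(k)})=(f_*(X))^{(k)}$, and the corresponding behaviour of the subspaces $\K_{\theta_{k+1}}$ under the lifted diffeomorphism $f^{(k)}$. Granting these, membership in $\A_{\theta_{k+1}}$ is transported by $f^{(k)}$ to membership in $\A_{f^{(k+1)}(\theta_{k+1})}$, and the opposite inclusion follows by applying the same argument to $f^{-1}$, using that $j^{k+3}$ is functorial, so that $j_{f(p)}^{k+3}(f^{-1})=(j_p^{k+3}f)^{-1}$ on $W_{f(p)}/L_{f(p)}^{k+2}$.

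First I would establish the key identity
$$
  (f^{(k)})_*\bigl(\K_{\theta_{k+1}}\bigr)=\K_{f^{(k+1)}(\theta_{k+1})}\,.
$$
Choose a section $S$ of $\pi$, defined near $p$, with $j_p^{k+1}S=\theta_{k+1}$, and put $\tilde S=f^{(0)}\circ S\circ f^{-1}$, a section defined near $f(p)$. From the defining formula \eqref{LftTr} for $f^{(k)}$ one reads off that $f^{(k)}\circ(j_kS)\circ f^{-1}=j_k\tilde S$ as sections of $\pi_k$; in particular $f^{(k)}$ carries $\im(j_kS)$ onto $\im(j_k\tilde S)$ and carries $\theta_k=j_p^kS$ to $j_{f(p)}^k\tilde S=\pi_{k+1,k}\bigl(f^{(k+1)}(\theta_{k+1})\bigr)$. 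Hence $(f^{(k)})_*$ maps the tangent space to $\im(j_kS)$ at $\theta_k$, which is $\K_{\theta_{k+1}}$ by definition, isomorphically onto the tangent space to $\im(j_k\tilde S)$ at $j_{f(p)}^k\tilde S$, which is $\K_{f^{(k+1)}(\theta_{k+1})}$.

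With this in hand the proposition follows quickly. Let $j_p^{k+2}X\in\A_{\theta_{k+1}}$, so that $X^{(k)}_{\theta_k}\in\K_{\theta_{k+1}}$. Applying $(f^{(k)})_*$ and the identity above, $(f^{(k)})_*\bigl(X^{(k)}_{\theta_k}\bigr)\in\K_{f^{(k+1)}(\theta_{k+1})}$. But $(f^{(k)})_*(X^{(k)})=(f_*(X))^{(k)}$, so the left-hand side is precisely the value of the lifted field $(f_*(X))^{(k)}$ at the point $f^{(k)}(\theta_k)$, and therefore $j_{f(p)}^{k+2}(f_*(X))\in\A_{f^{(k+1)}(\theta_{k+1})}$. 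Since $j_{f(p)}^{k+2}(f_*(X))=j_p^{k+3}f\bigl(j_p^{k+2}X\bigr)$ by the definition of $j_p^{k+3}f$, this gives $j_p^{k+3}f(\A_{\theta_{k+1}})\subseteq\A_{f^{(k+1)}(\theta_{k+1})}$. Running the same argument with $f$ replaced by $f^{-1}$ and $\theta_{k+1}$ replaced by $f^{(k+1)}(\theta_{k+1})$ — using $(f^{-1})^{(k+1)}\bigl(f^{(k+1)}(\theta_{k+1})\bigr)=\theta_{k+1}$ and $j_{f(p)}^{k+3}(f^{-1})=(j_p^{k+3}f)^{-1}$ — yields the reverse inclusion, hence equality.

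The only step requiring genuine care is the $\K$-identity: one must verify that $f^{(k)}$ truly conjugates the prolonged section $j_kS$ into $j_k\tilde S$, so that it maps the image of one onto the image of the other, and one must keep track of the shifting base point $p\mapsto f(p)$ and of the domains on which $f^{(0)}$, $f^{(k)}$, $f^{(k+1)}$ and the sections involved are defined. The hypothesis that $\theta_{k+1}$ lies in the domain of $f^{(k+1)}$ guarantees that all of these domains are nonempty neighbourhoods of the relevant points, so the germ-level computations are legitimate. Everything else is a formal consequence of naturality and of the functoriality of jets.
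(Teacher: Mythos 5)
Your proof is correct and follows essentially the same route as the paper: both arguments reduce the statement to the naturality identity $(f^{(k)})_*(X^{(k)})=(f_*(X))^{(k)}$ together with the fact that $f^{(k)}_*(\K_{\theta_{k+1}})=\K_{f^{(k+1)}(\theta_{k+1})}$. You are in fact slightly more complete than the paper, which asserts the $\K$-identity as ``clear'' and proves only the inclusion $j_p^{k+3}f(\A_{\theta_{k+1}})\subset\A_{f^{(k+1)}(\theta_{k+1})}$, whereas you justify the $\K$-identity via $f^{(k)}\circ(j_kS)\circ f^{-1}=j_k\tilde S$ and obtain the reverse inclusion by applying the argument to $f^{-1}$.
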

\begin{proof} Let $X$ be a vector field in the base of $\pi$ and let $\varphi_t$ be the flow of $X$. Then the condition 
$j_p^{k+2}X\in\A_{\theta_{k+1}}$ means that 
$X^{(k)}_{\theta_k}=d/dt\bigl(\varphi_t^{(k)}(\theta_k)\bigr)\bigl|_{t=0}\in\K_{\theta_{k+1}}$.
It follows that
$$
  \frac{d}{dt}(f\circ\varphi_t\circ f^{-1})^{(k)}\bigl(f^{(k)}(\theta_k)\bigr)\bigl|_{t=0}=
  \frac{d}{dt}f^{(k)}\bigl(\varphi_t^{(k)}(\theta_k)\bigr)\bigl|_{t=0}
  =f^{(k)}_*(X^{(k)}_{\theta_k})\,.
$$
It is clear that $f^{(k)}_*(\K_{\theta_{k+1}})=\K_{f^{(k+1)}(\theta_{k+1})}$ for every point $\theta_{k+1}$ of the domain of $f^{(k+1)}$. Therefore $f^{(k)}_*(X^{(k)}_{\theta_k})\in\A_{f^{(k+1)}(\theta_{k+1})} $
Thus $j_p^{k+3}f(j_p^{k+2}X)\in\A_{f^{(k+1)}(\theta_{k+1})}$. 
\end{proof}
Consider the restriction of the bilinear map $[\,\cdot\,,\,\cdot\,]:W_p/L_p^{k+2}\times W_p/L_p^{k+2}\to W_p/L_p^{k+1}$ defined by \eqref{brckt1} to $\A_{\theta_{k+1}}\times\A_{\theta_{k+1}}$.
\begin{proposition}\label{BrktIstrpSpc} 
  $$
    [\,\A_{\theta_{k+1}}\,,\,\A_{\theta_{k+1}}\,]\subset\A_{\theta_k}\,.
  $$
\end{proposition}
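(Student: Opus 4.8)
The plan is to reduce the claim to a statement about the lifted vector fields $X^{(k)}$ at the point $\theta_k$ and to use the fact, recorded in Proposition~\ref{LieAlgIsm}, that $X\mapsto X^{(k)}$ is a Lie algebra homomorphism. Let $j_p^{k+2}X$ and $j_p^{k+2}Y$ lie in $\A_{\theta_{k+1}}$. By the very definition \eqref{IstrpSpc} this means $X^{(k)}_{\theta_k}\in\K_{\theta_{k+1}}$ and $Y^{(k)}_{\theta_k}\in\K_{\theta_{k+1}}$. I must show that $j_p^{k+1}[X,Y]\in\A_{\theta_k}$, i.e. that $[X,Y]^{(k-1)}_{\theta_{k-1}}\in\K_{\theta_k}$, where $\theta_{k-1}=\pi_{k,k-1}(\theta_k)$ and $\K_{\theta_k}=\langle D_1^{k-1}|_{\theta_{k-1}},\,D_2^{k-1}|_{\theta_{k-1}}\rangle$.

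First I would observe that $\K_{\theta_{k+1}}$, being the tangent space at $\theta_k$ to the image of the section $j_kS$ (for any section $S$ with $j_p^{k+1}S=\theta_{k+1}$), is an \emph{integral plane} of the Cartan distribution on $J^k\pi$; concretely it is spanned by the total-derivative operators $D_1^k|_{\theta_k}$, $D_2^k|_{\theta_k}$. The key point is that these operators commute: $[D_1^k,D_2^k]$ vanishes modulo terms that drop one jet-order, and more precisely the submanifold $\mathrm{im}(j_kS)$ is an integral manifold, so the distribution $\theta_k\mapsto\K$ restricted along this image is involutive in the relevant sense. Hence, using Proposition~\ref{LieAlgIsm} to write $[X,Y]^{(k)}=[X^{(k)},Y^{(k)}]$, and evaluating the bracket of two vector fields each of which is tangent to $\mathrm{im}(j_kS)$ along that submanifold, one gets that $[X^{(k)},Y^{(k)}]_{\theta_k}$ is again tangent to $\mathrm{im}(j_kS)$, i.e. lies in $\K_{\theta_{k+1}}$. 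Projecting by $(\pi_{k,k-1})_*$ and using $(\pi_{k,k-1})_*(X^{(k)})=X^{(k-1)}$ together with $(\pi_{k,k-1})_*(\K_{\theta_{k+1}})\subset\K_{\theta_k}$ then yields $[X,Y]^{(k-1)}_{\theta_{k-1}}\in\K_{\theta_k}$, which is exactly $j_p^{k+1}[X,Y]\in\A_{\theta_k}$.

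There is a subtlety I should be careful about: $X^{(k)}_{\theta_k}$ being tangent to $\mathrm{im}(j_kS)$ at the single point $\theta_k$ does not by itself say $X^{(k)}$ is tangent to that submanifold in a neighborhood, so the naive "bracket of tangent fields is tangent" argument needs adjusting. The clean way around this is to work one order up: since $j_p^{k+2}X\in\A_{\theta_{k+1}}$ only constrains the value at $\theta_k$, I instead compute in coordinates. Write $X^{(k)}=X^1D_1^k+X^2D_2^k+\re^k_{\psi_X}$ from \eqref{kPrlng}; the condition $X^{(k)}_{\theta_k}\in\K_{\theta_{k+1}}=\langle D_1^k,D_2^k\rangle|_{\theta_k}$ forces the vertical part $\re^k_{\psi_X}$ to vanish at $\theta_k$, i.e. $(D_\sigma\psi^i_X)(\theta_{k})=0$ for $|\sigma|\le k-1$ — wait, more precisely for the orders that detect verticality, which is exactly the defining system of $\A_{\theta_{k+1}}$ in Proposition~\ref{IstrpSpcDscrb}. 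Then $[X^{(k)},Y^{(k)}]=[X^1D_1^k+X^2D_2^k+\re^k_{\psi_X},\,Y^1D_1^k+Y^2D_2^k+\re^k_{\psi_Y}]$ expands into: a horizontal piece (a combination of $D_1^k,D_2^k$, automatically in $\K$), cross terms of the form $D_i^k(\psi^j_Y)\partial/\partial u^j_\sigma$-type expressions evaluated against the horizontal generators, and $[\re^k_{\psi_X},\re^k_{\psi_Y}]$. At $\theta_k$ the vertical contributions to the $(k-1)$-jet level are governed by $D_\sigma\psi$ with $|\sigma|\le k-1$, and these all vanish because $j_p^{k+2}X,\,j_p^{k+2}Y\in\A_{\theta_{k+1}}$; the leftover top-order term lands in $L_p^{k+1}/L_p^{k}$-worth of slack, i.e. it is killed when one projects down to $W_p/L_p^{k+1}$. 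This bookkeeping — tracking exactly which $D_\sigma\psi$ survive after taking the bracket and then projecting by $\rho_{k+2,k+1}$ — is the main obstacle, but it is a finite, mechanical computation once organized by jet-order, and it parallels the filtration estimate $[L_p^i,L_p^j]=L_p^{i+j}$ together with the observation $\rho_{k+2,k+1}(\A_{\theta_{k+1}})\subset\A_{\theta_k}$ already noted above. Combining these gives the inclusion $[\A_{\theta_{k+1}},\A_{\theta_{k+1}}]\subset\A_{\theta_k}$.
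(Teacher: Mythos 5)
Your second, coordinate-based argument is essentially the paper's own proof: it likewise writes $X^{(\infty)}=X^rD_r+\re_{\psi_X}$, uses $[D_j,\re_{\psi}]=0$ and $[\re_{\psi_X},\re_{\psi_Y}]=\re_{\{\psi_X,\psi_Y\}}$, and kills the vertical part of $[X,Y]^{(k-1)}_{\theta_{k-1}}$ via the defining equations $D_\sigma(\psi^i_X)(\theta_{k+1})=0$, $0\le|\sigma|\le k$, so the horizontal remainder $(X^jY^i_j-Y^jX^i_j)D^{k-1}_i$ lies in $\K_{\theta_k}$. The bookkeeping you leave open (and your slightly off bound $|\sigma|\le k-1$) is settled exactly as in the paper by noting that $\{\psi_X,\psi_Y\}$ involves only $\psi$ and its first total derivatives, so $D_\sigma\{\psi_X,\psi_Y\}$ for $|\sigma|\le k-1$ is a sum of terms each containing a factor $D_\tau\psi^{i'}_X$ or $D_\tau\psi^{i'}_Y$ with $|\tau|\le k$, all vanishing at $\theta_{k+1}$.
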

\begin{proof} Suppose $j^{2+k}_pX,j^{2+k}_pY\in\A_{\theta_{k+1}}$. Then
$$
  [\,j^{2+k}_pX,\,j^{2+k}_pY\,]=j^{2+k-1}_p[\,X\,,Y\,]\,.
$$ 
It is obvious that 
$$
  j^{2+k-1}_p[\,X\,,Y\,]\in\A_{\theta_k}\quad\text{iff}\quad [\,X,\,Y\,]^{(k-1)}_{\theta_{k-1}}\in\K_{\theta_k}\,,
$$ 
where $\theta_k=\pi_{k+1,k}(\theta_{k+1})$ and $\theta_{k-1}=\pi_{k,k-1}(\theta_k)$. Suppose
$$
  X = X^1\frac{\partial}{\partial x^1} + X^2\frac{\partial}{\partial x^2}\,,\quad 
  Y = Y^1\frac{\partial}{\partial x^1} + Y^2\frac{\partial}{\partial x^2}\,.
$$ 
Then 
\begin{multline*}
    [\,X,\,Y\,]^{(k-1)}_{\theta_{k-1}}
    =(\pi_{\infty,k-1})_*\bigl([\,X,\,
    Y\,]^{(\infty)}_{\theta_{\infty}}\bigr)
    =(\pi_{\infty,k-1})_*\bigl([\,X^{(\infty)},\,
    Y^{(\infty)}\,]_{\theta_{\infty}}\bigr)\\
    =(\pi_{\infty,k-1})_*\bigl([\,X^rD_r+\re_{\psi(X)},\;
    Y^1D_1+ Y^2D_2+\re_{\psi(Y)}\,]_{\theta_{\infty}}\bigr)\,,
\end{multline*}
where $\theta_{\infty}\in(\pi_{\infty, k+1})^{-1}(p)$. Taking into account the well known relations, see \cite{KLV},
$$
 [\,D_1,\,D_2\,]=[\,D_j,\,\re_{\psi}\,]=0,\;j=1,2,\quad[\,\re_{\phi},\,\re_{\psi}\,]=\re_{\{\phi,\psi\}}\,,
$$
where $\{\phi,\psi\}=\re_{\phi}(\psi)-\re_{\psi}(\phi)$, we get
   \begin{multline*}
    [\,X,\,Y\,]^{(k-1)}_{\theta_{k-1}}
    =(\pi_{\infty,k-1})_*
    \bigl(\,(X^jY^i_j-Y^jX^i_j)D_i+[\,\re_{\psi(X)},\,\re_{\psi(Y)}\,]
    \,\bigr)\bigl|_{\theta_{k-1}}\\
    =\bigl(\,(X^jY^i_j-Y^jX^i_j)D_i^{k-1}+\re^{k-1}_{\{\psi(X),\psi(Y)\}}\,\bigr)\bigl|_{\theta_{k-1}}\,.
  \end{multline*}
From \eqref{DfrmtnVlst1} and \eqref{Re}, we get
\begin{multline*}
  \{\psi(X),\psi(Y)\}^i
  =\psi^{i'}(X)\frac{\partial\psi^i(Y)}{\partial u^{i'}}
   +D_j(\psi^{i'}(X))\frac{\partial\psi^i(Y)}{\partial u^{i'}_j}\\
   -\psi^{i'}(Y)\frac{\partial\psi^i(X)}{\partial u^{i'}}
   -D_j(\psi^{i'}(Y))\frac{\partial\psi^i(X)}{\partial
   u^{i'}_j}\,.
\end{multline*}
From Proposition \ref{IstrpSpcDscrb}, we have  
\begin{gather*}
  \bigl(D_{\sigma}(\,\psi^i_{X}\,)\bigr)(\theta_{k+1})=0\quad\text{and}\quad
  \bigl(D_{\sigma}(\,\psi^i_{Y}\,)\bigr)(\theta_{k+1})=0\\
  i=1,2,3,4\,,\quad 0\leq|\sigma|\leq k\,.
\end{gather*}   
It follows that $\re^{(k-1)}_{\{\psi(X),\psi(Y)\}}\bigl|_{\theta_{k-1}}=0$. Hence, 
$$
 [\,X,\,Y\,]^{(k-1)}_{\theta_{k-1}}=\bigl((X^jY^i_j-Y^jX^i_j)D_i^{k-1}\bigr)\bigl|_{\theta_{k-1}}\,.
$$ 
This means that $[\,j^{2+k}_pX,\,j^{2+k}_pY\,]\in\A_{\theta_k}$.
\end{proof}

\subsection{Horizontal subspaces}
Let $\theta_{k+1}\in J^{k+1}\pi$, $\theta_k=\pi_{k+1,k}(\theta_{k+1})$, and $p=\pi_{k+1}(\theta_{k+1})$. A 2--dimensional subspace $H\subset\A_{\theta_{k+1}}$ is called {\it horisontal} if the natural projection
$$
  \rho_{k+2,0}\bigl|_H:H\longrightarrow T_p,\quad\rho_{k+2,0}: j_p^{k+2}X\mapsto X_p,
$$
is an isomorphism. Let $H$ be a horizontal subspace of $\A_{\theta_{k+1}}$, then 
$$
  \A_{\theta_{k+1}}=H\oplus\g_{\theta_k}\,.
$$

Every two horizontal subspaces $H,\tilde H\subset\A_{\theta_{k+1}}$ define the linear map
$$
  f_{H,\tilde H}:T_p\to\g_{\theta_k}\,,\quad
  f_{H,\tilde H}: X\mapsto (\rho_{k+2,0}|_H)^{-1}(X)
  -(\rho_{k+2,0}|_{\tilde H})^{-1}(X)\,.
$$
On the other hand, let $H\subset\A_{\theta_{k+1}}$ be a horizontal subspace and let $f:T_p\to\g_{\theta_k}$ be a linear map. Then there exists a unique horizontal subspace $\tilde H\subset\A_{\theta_{k+1}}$ such that $f=f_{H,\tilde H}$. This subspace is spanned by the $k+2$--jets $(\rho_{k+2,0}|_H)^{-1}(X)-f(X)$, $X\in T_p$.

Every horizontal subspace $H\subset\A_{\theta_{k+1}}$ generates the 2--form $\omega_H$ on $T_p$ with values in $\A_{\theta_k}$ 
\begin{equation}\label{OmgH}
  \omega_H(X_p, Y_p)=[\,(\rho_{k+2,0}\bigl|_H)^{-1}(X_p),\,(\rho_{k+2,0}\bigl|_H)^{-1}(Y_p)\,]\,,\quad
  \forall\,X_p, Y_p\in T_p.
\end{equation}

From Proposition \ref{TrnsfrmIstrpSpc} we obviously get the following
\begin{proposition} Let $f$ be a point transformation of the base of $\pi$ and let $\theta_{k+1}$ be a point of the  
  domain of the lifted transformation $f^{(k+1)}$. Then
  \begin{enumerate}
  \item \label{TrnsfrmHS} If $H$ is a horizontal subspace of $\A_{\theta_{k+1}}$, then $j_p^{k+3}f(H)$ is a   
    horizontal subspace of $\A_{f^{(k+1)}(\theta_{k+1})}$.
  \item \label{TrnsfrmOmgH}  
    $j_{f(p)}^{k+2}f\bigl(\omega_H(X_p, Y_p)\bigr)=\omega_{j_p^{k+3}f(H)}\bigl(f_*(X_p), f_*(Y_p)\bigr)
     \,,\quad\forall\,X_p, Y_p\in T_p$.
  \end{enumerate}           
\end{proposition}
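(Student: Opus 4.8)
The statement asserts two equivariance properties of the construction $H \mapsto \omega_H$; both follow by unwinding definitions and applying Proposition~\ref{TrnsfrmIstrpSpc}. The plan is to treat part~\eqref{TrnsfrmHS} first, then derive part~\eqref{TrnsfrmOmgH} from it together with the compatibility of $j_p^{k+3}f$ with the bracket \eqref{brckt1}.

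For part~\eqref{TrnsfrmHS}: let $H \subset \A_{\theta_{k+1}}$ be horizontal. By Proposition~\ref{TrnsfrmIstrpSpc}, $j_p^{k+3}f(\A_{\theta_{k+1}}) = \A_{f^{(k+1)}(\theta_{k+1})}$, so $j_p^{k+3}f(H)$ is a $2$-dimensional subspace of $\A_{f^{(k+1)}(\theta_{k+1})}$ — using that $j_p^{k+3}f$ is a linear isomorphism onto its target, which is clear since it is induced by the isomorphism $f_* : T_p \to T_{f(p)}$. It remains to check that $\rho_{k+2,0}$ restricted to $j_p^{k+3}f(H)$ is an isomorphism onto $T_{f(p)}$. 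The key point is the commutation relation $\rho_{k+2,0} \circ j_p^{k+3}f = f_* \circ \rho_{k+2,0}$, which holds because $j_p^{k+3}f : j_p^{k+2}X \mapsto j_{f(p)}^{k+2}(f_*(X))$ acts on the $0$-jet part simply as $X_p \mapsto f_*(X_p)$. Since $\rho_{k+2,0}|_H$ and $f_*$ are both isomorphisms, so is $\rho_{k+2,0}|_{j_p^{k+3}f(H)} = f_* \circ (\rho_{k+2,0}|_H) \circ (j_p^{k+3}f|_H)^{-1}$.

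For part~\eqref{TrnsfrmOmgH}: write $\eta = \rho_{k+2,0}|_H$ and $\eta' = \rho_{k+2,0}|_{j_p^{k+3}f(H)}$. From the commutation relation just established, $(\eta')^{-1}(f_*(X_p)) = j_p^{k+3}f\bigl(\eta^{-1}(X_p)\bigr)$ for all $X_p \in T_p$, and similarly for $Y_p$. Now apply the definition \eqref{OmgH} at $f^{(k+1)}(\theta_{k+1})$:
$$
  \omega_{j_p^{k+3}f(H)}(f_*(X_p), f_*(Y_p))
  = [\,j_p^{k+3}f(\eta^{-1}(X_p)),\, j_p^{k+3}f(\eta^{-1}(Y_p))\,]\,.
$$
Since $j_p^{k+3}f$ is induced by $f_*$ and the bracket is natural, $j_p^{k+3}f$ intertwines the maps \eqref{brckt1} at $p$ and $f(p)$; that is, $j_{f(p)}^{k+2}f\bigl([\,a,b\,]\bigr) = [\,j_p^{k+3}f(a),\, j_p^{k+3}f(b)\,]$ for $a,b \in W_p/L_p^{k+2}$, where on the left $j_{f(p)}^{k+2}f$ denotes the induced map $W_p/L_p^{k+1} \to W_{f(p)}/L_{f(p)}^{k+1}$. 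Hence the right-hand side equals $j_{f(p)}^{k+2}f\bigl([\,\eta^{-1}(X_p),\,\eta^{-1}(Y_p)\,]\bigr) = j_{f(p)}^{k+2}f\bigl(\omega_H(X_p,Y_p)\bigr)$, which is the claim.

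The only genuine content is the naturality of the bracket \eqref{brckt1} under $j_p^{k+3}f$, i.e.\ that push-forward of jets of vector fields by a diffeomorphism commutes with the Lie bracket modulo the filtration — this reduces to the classical fact $f_*[X,Y] = [f_*X, f_*Y]$ for vector fields, passed to jets; everything else is bookkeeping with the projections $\rho$. I expect no real obstacle: the main care needed is to keep straight the two slightly different maps both suggestively written $j_p^{k+3}f$ and $j_{f(p)}^{k+2}f$ (one on $W_p/L_p^{k+2}$, one on $W_p/L_p^{k+1}$) and to confirm they are compatible with $\rho_{k+2,k+1}$, so that the bracket identity lands in the correct space $\A_{f^{(k+1)}(\theta_k)} = \A_{f^{(k)}(\theta_k)}$.
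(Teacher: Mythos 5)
Your proof is correct and follows the same route the paper intends: the paper derives this proposition as an immediate ("obvious") consequence of Proposition~\ref{TrnsfrmIstrpSpc}, and your argument simply fills in the omitted bookkeeping, namely the commutation $\rho_{k+2,0}\circ j_p^{k+3}f=f_*\circ\rho_{k+2,0}$ for part~(1) and the naturality of the bracket under push-forward of jets (reducing to $f_*[X,Y]=[f_*X,f_*Y]$) for part~(2). Nothing further is needed.
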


%

\section{Differential invariants on $J^2\pi$}
%

\subsection{Horizontal subspaces of ${\bf\A_{\theta_2}}$}
Let  $\theta_2\in J^2\pi$, $\theta_1=\pi_{2,1}(\theta_2)$ and $p=\pi_2(\theta_2)$. 
Consider the space $\A_{\theta_2}$. It is a subspace of the space $W_p/L_p^3$. From the system of equations describing the space $\A_{\theta_2}$, see Proposition \ref{IstrpSpcDscrb}, we obtain the natural filtration of $\A_{\theta_2}$ and the corresponding graduate space $\G\A_{\theta_2}$:
$$
  \A_{\theta_2}\supset\g_{\theta_1}\supset\g_{\theta_1}^2\supset\{0\}\,,\quad
  \G\A_{\theta_2}=T_p\oplus L_p^0/L_p^1\oplus g^2\oplus\{0\},
$$
where $g^2$ is described by \eqref{g2}.

\begin{proposition} There are horizontal subspaces $H\subset\A_{\theta_2}$ satisfying the condition
  \begin{equation}\label{HS1}
    \rho_{2,1}\bigl(\,\bigl[\,j_p^3X\,,\,j_p^3Y\,\bigr]\,\bigr)=0\quad
    \forall\,j_p^3X,j_p^3Y\in H\,.
  \end{equation}
\end{proposition}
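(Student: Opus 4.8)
The plan is to exploit the bracket estimate from Proposition \ref{BrktIstrpSpc}, namely $[\,\A_{\theta_2},\A_{\theta_2}\,]\subset\A_{\theta_1}$, together with the explicit filtration $\A_{\theta_1}\supset\g_{\theta_1}\supset\g_{\theta_1}^2\supset\{0\}$ and the isomorphism $\rho_{3,2}|_{\g_{\theta_1}}:\g_{\theta_1}\to\g_{\theta_0}$ from \eqref{FstPrlng1}. First I would start from an arbitrary horizontal subspace $H_0\subset\A_{\theta_2}$, which exists because the filtration of $\A_{\theta_2}$ has $T_p$ as its top graded piece (so $\rho_{3,0}|_{\A_{\theta_2}}$ is onto $T_p$ and one can split). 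For such $H_0$ the form $\omega_{H_0}$ of \eqref{OmgH} takes values in $\A_{\theta_1}$, and what must be killed is its image under $\rho_{2,1}$, i.e. the $T_p$-component of $\omega_{H_0}$ in $\G\A_{\theta_1}=T_p\oplus L_p^0/L_p^1\oplus g^2\oplus\{0\}$. Call this $T_p$-valued $2$-form $c_{H_0}\in T_p\otimes\wedge^2 T_p^*$.

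Next I would compute how $c_H$ changes when $H$ is replaced by another horizontal subspace $\tilde H$. By the correspondence in the subsection on horizontal subspaces, $\tilde H$ is obtained from $H_0$ by a linear map $\varphi=f_{H_0,\tilde H}:T_p\to\g_{\theta_1}$, i.e. $\tilde X=X-\varphi(X)$ for $X\in H_0$ (identifying $T_p$ with $H_0$ via $\rho_{3,0}$). Expanding $[\tilde X,\tilde Y]=[X,Y]-[X,\varphi(Y)]-[\varphi(X),Y]+[\varphi(X),\varphi(Y)]$ and projecting by $\rho_{2,1}$: the last term lands in $[\g_{\theta_1},\g_{\theta_1}]\subset\g_{\theta_0}$-level stuff which projects into $L_p^0/L_p^1\oplus\cdots$, contributing nothing to the $T_p$-component; the cross terms $[X,\varphi(Y)]$ with $\varphi(Y)\in\g_{\theta_1}\subset L_p^0/L_p^3$ have, via \eqref{brckt2}, a $T_p$-component governed by the $L_p^0/L_p^1$-part of $\varphi(Y)$ bracketed with $X\in T_p$. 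Writing $\bar\varphi=\rho_{3,1}\circ\varphi:T_p\to L_p^0/L_p^1=T_p\otimes T_p^*=\mathfrak{gl}(T_p)$, one gets that the $T_p$-component transforms by $c_{\tilde H}(X,Y)=c_{H_0}(X,Y)-\bar\varphi(X)(Y)+\bar\varphi(Y)(X)$, which is exactly $c_{H_0}-\partial(\bar\varphi)$ for the Spencer-type operator $\partial:\mathfrak{gl}(T_p)=T_p\otimes T_p^*\to T_p\otimes\wedge^2 T_p^*$ (antisymmetrization, cf. \eqref{SpncrOpr}). Since $\rho_{3,1}|_{\g_{\theta_1}}$ lands onto all of $L_p^0/L_p^1$ (the graded piece $g^1_{\theta_1}$ is the full $L_p^0/L_p^1$, as recorded in the graded space $\G\g_{\theta_1}=L^0_p/L^1_p\oplus g^2\oplus\{0\}$), the map $\bar\varphi$ ranges over all of $\mathfrak{gl}(T_p)$, so the achievable corrections to $c_{H_0}$ are exactly the image of $\partial$.

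The remaining point is purely linear algebra on a $2$-dimensional $T_p$: one must check that $\partial:T_p\otimes T_p^*\to T_p\otimes\wedge^2 T_p^*$ is surjective. Here $\dim(T_p\otimes T_p^*)=4$ and $\dim(T_p\otimes\wedge^2 T_p^*)=2$, and the kernel of $\partial$ is $(L_p^0)$-prolongation-type data: concretely $\ker\partial\cong S^2T_p^*\otimes T_p\cap(\cdots)$, in any case at least $2$-dimensional (it contains the symmetric tensors), so by rank-nullity $\partial$ is onto. Therefore one can choose $\bar\varphi$ with $\partial\bar\varphi=c_{H_0}$, lift it to $\varphi:T_p\to\g_{\theta_1}$, and the corresponding $\tilde H$ satisfies $c_{\tilde H}=0$, i.e. \eqref{HS1}. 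I expect the main obstacle to be bookkeeping the graded/filtered pieces correctly — in particular verifying that the only surviving $T_p$-component in $\rho_{2,1}[\tilde X,\tilde Y]$ is the antisymmetrized $\bar\varphi$ term and that all the $\g_{\theta_1}$-valued contributions from $[\varphi(X),\varphi(Y)]$ and from the higher-order ($L_p^1/L_p^3$) parts of $\varphi$ project to zero in the $T_p$-slot; this is where the relations $[L_p^i,L_p^j]\subset L_p^{i+j}$ and the explicit shape of \eqref{brckt1}--\eqref{brckt2} do the work, and it should be checked carefully rather than asserted.
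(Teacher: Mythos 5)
There is a genuine gap: you have conflated the projection $\rho_{2,1}$ with the projection $\rho_{2,0}$. Condition \eqref{HS1} asks that the full $1$--jet $\rho_{2,1}\bigl([\,j_p^3X,\,j_p^3Y\,]\bigr)=j_p^1[X,Y]$ vanish, and in the graded picture $W_p/L_p^1\cong T_p\oplus L_p^0/L_p^1$ this $1$--jet has \emph{two} components: the value $[X,Y]_p\in T_p$ (which is $\rho_{2,0}$ of the bracket) and a first--order part in $L_p^0/L_p^1$. Your argument --- correcting an arbitrary horizontal $H_0$ by a map $\varphi:T_p\to\g_{\theta_1}$ whose $\rho_{3,1}$--image sweeps out all of $L_p^0/L_p^1\otimes T_p^*$, and using surjectivity of $\partial_{1,1}:L_p^0/L_p^1\otimes T_p^*\to T_p\otimes\wedge^2T_p^*$ --- kills only the $T_p$--component. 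It therefore proves only the intermediate condition $\rho_{2,0}\bigl([\,j_p^3X,\,j_p^3Y\,]\bigr)=0$, which is exactly the first step of the paper's proof; the $L_p^0/L_p^1$--component of the bracket is untouched and is in general nonzero at that stage.

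The missing second step cannot be absorbed into the first: once the $T_p$--component is killed, one may only perturb $H$ further by maps $f:T_p\to\g_{\theta_1}^2$ (so that $\rho_{3,1}\circ f=0$ and the already--achieved vanishing is preserved), and the paper then uses $\rho_{3,2}\circ f\in g^2\otimes T_p^*$ together with the exactness of the complex $0=(g^2)^{(1)}\to g^2\otimes T_p^*\xrightarrow{\partial_{2,1}} L_p^0/L_p^1\otimes\wedge^2T_p^*\to 0$ (here $\partial_{2,1}$ is in fact an isomorphism, since $\dim(g^2\otimes T_p^*)=4=\dim(L_p^0/L_p^1\otimes\wedge^2T_p^*)$ and $(g^2)^{(1)}=0$ by \eqref{FstPrlng}) to kill the remaining $L_p^0/L_p^1$--component. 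Without this step the proposition is not proved. A minor additional slip: you write the operator $\partial$ on $T_p\otimes T_p^*\cong\mathfrak{gl}(T_p)$ of dimension $4$, but its domain is $L_p^0/L_p^1\otimes T_p^*$ of dimension $8$; the surjectivity you need is still true, but the dimension count as stated is not the relevant one.
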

\begin{proof} First step. Let us prove that there are horizontal subspaces $H\subset\A_{\theta_2}$ satisfying the condition
 \begin{equation}\label{HS0}
  \rho_{2,0}\bigl([\,j_p^3X\,,\,j_p^3Y\,]\bigr)=0\quad\forall\,j_p^3X, j_p^3Y\in H
 \end{equation}
Let $H$ be an arbitrary horizontal subspace of $\A_{\theta_2}$. Then the formula 
$$
  t_H(X_p,Y_p)=\rho_{2,0}\bigl(\omega_H(X_p,Y_p)\bigr)\,,\quad\forall\,X_p, Y_p\in T_p\,,
$$ 
defines the tensor $t_H\in T_p\otimes(\wedge^2T_p^*)$. Let $f:T_p\to\g_{\theta_1}$ be a linear map and let $\tilde H$ be a unique horizontal subspace of $\A_{\theta_2}$ such that $f_{H,\tilde H}=f$. Then for the corresponding tensor $t_{\tilde H}$, we have 
\begin{multline*}
  t_{\tilde H}(X_p,Y_p)=\rho_{2,0}\bigl([\,j_p^3X-f(X_p)\,,\,j_p^3Y-f(Y_p)\,]\bigr)\\
  =t_H(X_p,Y_p)-\rho_{2,0}\bigl([\,j_p^3X\,,\,f(Y_p)\,]-[\,j_p^3Y\,,\,f(X_p)\,]\bigr)\\
  =t_H(X_p,Y_p)-\partial_{1,1}(f')(X_p, Y_p)\,,
\end{multline*}
where $f'=\rho_{3,1}\circ f\in L_p^0/L_p^1\otimes T_p^*$ and the operator 
$\partial_{1,1}:L_p^0/L_p^1\otimes T_p^*\to T_p\otimes\wedge^2T_p^*$ is defined by \eqref{SpncrOpr} 
$$
  \partial_{1,1}(f')(X_p, Y_p)=[\,X_p,\,f'(Y_p)\,]-[\,Y_p,\,f'(X_p)\,]\,,\quad\forall\,X_p, Y_p\in T_p\,.
$$ 
It remains to prove that the linear map $f$ can be chosen such that $t_{\tilde H}=0$. To this end consider 
the spaces $L_p^0/L_p^1$ and $L_p^1/L_p^2$. They satisfy \eqref{brckt2}. Therefore we have complex \eqref{SpncrCmplx} constructed for these spaces 
$$
  0\rightarrow L_p^1/L_p^2\xrightarrow{\partial_{2,0}} L_p^0/L_p^1\otimes T_p^*
  \xrightarrow{\partial_{1,1}} T_p\otimes\wedge^2T_p^*
  \rightarrow 0\,.
$$
It is easy to check that this complex is exact. It follows that the linear map $f$ can be chosen such that $t_{\tilde H}=0$. From $g^1_{\theta_1}=L_p^0/L_p^1$ and $L_p^1/L_p^2\neq\{0\}$, we get that there are many horizontal subspaces of $\A_{\theta_2}$ satisfying \eqref{HS0}. 

In the standard coordinates, an arbitrary horizontal subspace $H\subset\A_{\theta_2}$ has the form 
$H=\bigl\{\;j^3_0X=(\,X^i,\;h^i_{j,r}X^r,\;h^i_{j_1j_2,r}X^r,\;f^i_{j_1j_2j_3,r}X^r\,)\;\bigr\}$.
Obviously, $H$ satisfies \eqref{HS0} iff 
$$
 h^i_{j,r}=h^i_{r,j}\quad\forall\,i,j,r\,.
$$

Second step. Let $H$ be an arbitrary horizontal subspace of $\A_{\theta_2}$ satisfying \eqref{HS0}, let $f:T_p\to\g_{\theta_1}^2$ be a linear map, and let $\tilde H$ be a unique horizontal subspace of $\A_{\theta_2}$ satisfying the condition $f_{H,\tilde H}=f$. Then obviously,
\begin{equation}\label{HS0Prj}
  \rho_{3,1}(\,\tilde H\,) =\rho_{3,1}(\,H\,)\,.
\end{equation}
On the other hand, if $\tilde H$ is an arbitrary horizontal subspace of $\A_{\theta_2}$ satisfying \eqref{HS0Prj}, then $f_{H,\tilde H}\in \g_{\theta_1}^2\otimes T_p^*$. It is clear now that there are many horizontal subspaces $\tilde H\subset\A_{\theta_2}$ satisfying \eqref{HS0Prj}. All these subspaces satisfy \eqref{HS0}. Let us prove that there exists a unique subspace $\tilde H$ satisfying \eqref{HS1} among horizontal subspaces satisfying \eqref{HS0Prj}. 
Taking into account \eqref{HS0} and \eqref{IstrAlg01}, we define the tensor $t_H\in L_p^0/L_p^1\otimes(\wedge^2T_p^*)$ by the formula 
$$
  t_H(X_p,Y_p)=\rho_{2,1}\bigl(\omega_H(X_p, Y_p)\bigr)\,.
$$
Let $\tilde H$ be a horizontal subspace satisfying \eqref{HS0Prj}. Then
\begin{multline*}
 t_{\tilde H}(X_p,Y_p)=t_H(X_p,Y_p)-\rho_{2,1}\bigl([\,j_p^3X\,,\,f(Y_p)\,]-[\,j_p^3Y\,,\,f(X_p)\,]\bigr)\\
  =t_H(X_p,Y_p)-\partial_{2,1}(f')(X_p, Y_p)\,,
\end{multline*}
where $f'=\rho_{3,2}\circ f_{H,\tilde H}\in g^2\otimes T_p^*$ and the operator 
$\partial_{2,1}:g^2\otimes T_p^*\to L_p^0/L_p^1\otimes\wedge^2T_p^*$ is defined by \eqref{SpncrOpr} 
$$
  \partial_{2,1}(f')(X_p, Y_p)=[\,X_p,\,f'(Y_p)\,]-[\,Y_p,\,f'(X_p)\,]\,,\quad\forall\,X_p, Y_p\in T_p\,.
$$ 
Now, from the exactness of the following complex \eqref{SpncrCmplx}
$$
 0 = (g^2)^{(1)}\xrightarrow{\partial_{3,0}} g^2\otimes T_p^*\xrightarrow{\partial_{2,1}} L_p^0/L_p^1\otimes
 \wedge^2T_p^*\rightarrow 0\,,
$$
we obtain that there exists a unique horizontal subspace $\tilde H\subset\A_{\theta_2}$ satisfying \eqref{HS0Prj} and \eqref{HS1}.

Thus we proved that there are many horizontal subspaces  of $\A_{\theta_2}$ satisfying \eqref{HS1}.
\end{proof}
The following obvious statement is important to construct differential invariants.
\begin{proposition}\label{TrnsfrmSHS} Suppose $f$ is a point transformation of the base of $\pi$, $\theta_2$ is a point of the domain of $f^{(2)}$, and $H$ is a horizontal subspace of $\A_{\theta_2}$ satisfying \eqref{HS1}. Then the horizontal subspace $j_p^4f(H)$ of $\A_{f^{(2)}(\theta_2)}$ satisfies \eqref{HS1} too.
\end{proposition}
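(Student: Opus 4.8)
The plan is to obtain this as a formal consequence of the equivariance of the whole construction, specialized to $k=1$ (so that $\theta_{k+1}=\theta_2$, $\theta_k=\theta_1$, $k+2=3$, $k+3=4$). By Proposition~\ref{TrnsfrmIstrpSpc} we have $j_p^4f(\A_{\theta_2})=\A_{f^{(2)}(\theta_2)}$; by the fact that point transformations carry horizontal subspaces to horizontal subspaces (item~\ref{TrnsfrmHS}), $j_p^4f(H)$ is a horizontal subspace of $\A_{f^{(2)}(\theta_2)}$; and by the equivariance of $\omega_H$ (item~\ref{TrnsfrmOmgH}),
\[
  j_p^3f\bigl(\omega_H(X_p,Y_p)\bigr)=\omega_{j_p^4f(H)}\bigl(f_*(X_p),f_*(Y_p)\bigr),\qquad\forall\,X_p,Y_p\in T_p .
\]
Hence the only thing left to verify is that $j_p^4f(H)$ again satisfies \eqref{HS1}.

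First I would restate \eqref{HS1} in terms of $\omega_H$: since $(\rho_{3,0}|_H)^{-1}$ maps $T_p$ bijectively onto $H$, the definition \eqref{OmgH} of $\omega_H$ shows that \eqref{HS1} for $H$ is equivalent to $\rho_{2,1}\bigl(\omega_H(X_p,Y_p)\bigr)=0$ for all $X_p,Y_p\in T_p$, and the same equivalence holds for $j_p^4f(H)$ at the point $f(p)$. Next I would record the one point that requires a word of justification: the induced map $j_p^3f:W_p/L_p^2\to W_{f(p)}/L_{f(p)}^2$ respects the natural filtration \eqref{Fltrtn} (a vector field vanishing to order $i$ at $p$ is carried by $f_*$ to one vanishing to order $i$ at $f(p)$), so the square with vertical arrows $\rho_{2,1}$ and horizontal arrows $j_p^3f$ and $j_p^2f$ commutes; indeed both composites send $j_p^2X$ to $j_{f(p)}^1(f_*X)$.

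Finally I would assemble these. Fix $X_p,Y_p\in T_p$. Since $H$ satisfies \eqref{HS1}, the restatement gives $\rho_{2,1}(\omega_H(X_p,Y_p))=0$, so applying $\rho_{2,1}$ to the equivariance relation and using the commuting square,
\[
  \rho_{2,1}\bigl(\omega_{j_p^4f(H)}(f_*X_p,f_*Y_p)\bigr)
  =\rho_{2,1}\bigl(j_p^3f(\omega_H(X_p,Y_p))\bigr)
  =j_p^2f\bigl(\rho_{2,1}(\omega_H(X_p,Y_p))\bigr)=0 .
\]
As $f_*:T_p\to T_{f(p)}$ is an isomorphism, the pair $(f_*X_p,f_*Y_p)$ runs over all of $T_{f(p)}\times T_{f(p)}$, hence $\rho_{2,1}\bigl(\omega_{j_p^4f(H)}(\cdot,\cdot)\bigr)=0$ identically on $T_{f(p)}$, which by the restatement is precisely condition \eqref{HS1} for $j_p^4f(H)$. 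I do not expect a genuine obstacle here: the statement is essentially a corollary of Proposition~\ref{TrnsfrmIstrpSpc} and the transformation proposition that follows it, and the only step worth a sentence is the compatibility of the filtration projections $\rho$ with the action of $f$ on jets of vector fields — which, as noted, is immediate from the fact that $f_*$ preserves vanishing order.
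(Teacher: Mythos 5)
Your argument is correct: the paper offers no proof at all for this proposition (it is declared ``obvious''), and your derivation from Proposition~\ref{TrnsfrmIstrpSpc}, the equivariance of $\omega_H$, and the compatibility of the projections $\rho_{i,j}$ with the jet maps induced by $f$ is exactly the natural formalization of what the author is taking for granted. No gaps; the one step you flag --- that $f_*$ preserves order of vanishing, so the induced maps commute with $\rho_{2,1}$ --- is indeed the only point needing a word.
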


\subsection{The obstruction to linearization}
Let $H$ be an arbitrary horizontal subspace of $\A_{\theta_2}$ satisfying \eqref{HS1}, let $\omega_H$ be its 2--form  defined by \eqref{OmgH}, and let $p=\pi_2(\theta_2)$ Then, obviously, 
$$
  \omega_{H}\in g^2\otimes(\wedge^2T_p^*)\,.
$$
\begin{theorem}\label{ThOmgIndH} The 2--form $\omega_{H}$ is independent of the choice of a horizontal subspace $H\subset\A_{\theta_2}$ satisfying \eqref{HS1}.
\end{theorem}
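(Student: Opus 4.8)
The plan is to fix two horizontal subspaces $H,\tilde H\subset\A_{\theta_2}$, both satisfying \eqref{HS1}, to express $\omega_{\tilde H}-\omega_H$ through the linear map $f=f_{H,\tilde H}\colon T_p\to\g_{\theta_1}$, and to show component by component, with respect to the filtration $\A_{\theta_1}\subset W_p/L_p^2$, that this difference is zero. First I would record the difference formula: writing $\bar X=(\rho_{3,0}|_H)^{-1}(X_p)\in H$ and using the bilinearity and antisymmetry of the bracket \eqref{brckt1} exactly as in the proof that \eqref{HS1}-horizontal subspaces exist,
$$
  \omega_{\tilde H}(X_p,Y_p)-\omega_H(X_p,Y_p)
  =-\bigl[\,\bar X,f(Y_p)\,\bigr]+\bigl[\,\bar Y,f(X_p)\,\bigr]+\bigl[\,f(X_p),f(Y_p)\,\bigr]\,.
$$
By the paragraph preceding the theorem, $\omega_H$ and $\omega_{\tilde H}$ take values in $g^2\subset L_p^1/L_p^2$; hence the left-hand side, regarded as an element of $W_p/L_p^2$, has vanishing $T_p$- and $L_p^0/L_p^1$-components, and it suffices to evaluate the $L_p^1/L_p^2$-component of the right-hand side.

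Next I would collect the relations that are available. Put $f'=\rho_{3,1}\circ f\in L_p^0/L_p^1\otimes T_p^*$ and write $\bar X=X_p+h_H(X_p)+k_H(X_p)$ in graded notation. Condition \eqref{HS1} implies \eqref{HS0} for both $H$ and $\tilde H$, so the $T_p$-components $\rho_{2,0}\circ\omega_H$ and $\rho_{2,0}\circ\omega_{\tilde H}$ vanish; since $\rho_{2,0}\bigl(\omega_{\tilde H}-\omega_H\bigr)=-\partial_{1,1}(f')$ (read off the displayed formula), this gives $\partial_{1,1}(f')=0$, and by exactness of the Spencer complex $0\to L_p^1/L_p^2\xrightarrow{\partial_{2,0}}L_p^0/L_p^1\otimes T_p^*\xrightarrow{\partial_{1,1}}T_p\otimes\wedge^2T_p^*\to0$ we may write $f'=\partial_{2,0}(h)$, i.e.\ $f'(v)=[\,v,h\,]$ for some $h\in L_p^1/L_p^2$. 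Moreover the $L_p^0/L_p^1$-component of the right-hand side of the displayed formula vanishes automatically (it is a component of $\omega_{\tilde H}-\omega_H$), and writing it out gives an identity computing $\partial_{2,1}$ of the $g^2$-part $f_2$ of $f$ in terms of $f'$ and $h_H$; finally, the vanishing of the $L_p^0/L_p^1$-components of $\omega_H$ and $\omega_{\tilde H}$ themselves (this is again \eqref{HS1}) gives two identities $(\star_H)$, $(\star_{\tilde H})$ relating $k_H,h_H$ and $k_{\tilde H},h_{\tilde H}$, where $h_{\tilde H}=h_H-f'$ and $k_{\tilde H}=k_H-f_2$.

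It remains to compute the $L_p^1/L_p^2$-component of $-[\bar X,f(Y_p)]+[\bar Y,f(X_p)]+[f(X_p),f(Y_p)]$. Substituting the identity for $\partial_{2,1}(f_2)$, the identities $(\star_H)$ and $(\star_{\tilde H})$, and $f'=\partial_{2,0}(h)$, and using the Jacobi identity in $W_p/L_p^2$, this component should collapse to $\partial_{2,1}$ of some element of $g^2\otimes T_p^*$; since $(g^2)^{(1)}=\{0\}$ by \eqref{FstPrlng}, the operator $\partial_{2,1}\colon g^2\otimes T_p^*\to L_p^0/L_p^1\otimes\wedge^2T_p^*$ is injective, so that element is $0$ and the component vanishes, which gives $\omega_{\tilde H}=\omega_H$. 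I expect this last step --- arranging the several bilinear identities so that they telescope --- to be the main obstacle; it is precisely here that the two-dimensionality of $g^2$ and, above all, the triviality of its first prolongation are used. As a check, the sub-case $\rho_{3,1}(\tilde H)=\rho_{3,1}(H)$ is immediate: then $f'=0$, so $f=f_2\in g^2\otimes T_p^*$, the $L_p^0/L_p^1$-component of the difference equals $-\partial_{2,1}(f_2)$, which must vanish, hence $f_2=0$ and $\tilde H=H$; so all the content is carried by the directions with $f'\neq0$.
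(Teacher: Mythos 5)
Your setup is sound as far as it goes: the difference formula
$\omega_{\tilde H}-\omega_H=-[\bar X,f(Y_p)]+[\bar Y,f(X_p)]+[f(X_p),f(Y_p)]$
is correct, and your first reduction --- that $\partial_{1,1}(f')=0$, hence $f'=\partial_{2,0}(h)$ for some $h\in L_p^1/L_p^2$, i.e.\ the degree-one part of $f$ is symmetric --- is exactly the content of condition \eqref{HS0} and agrees with the paper. Your sub-case ($\rho_{3,1}(\tilde H)=\rho_{3,1}(H)$ forces $\tilde H=H$) is also correct and is already contained in the paper's existence proposition. But the core of the theorem is precisely the case you defer, and the mechanism you propose for it does not work. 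The component you must kill lives in $g^2\otimes\wedge^2T_p^*\subset L_p^1/L_p^2\otimes\wedge^2T_p^*$, whereas $\partial_{2,1}$ maps $g^2\otimes T_p^*$ into $L_p^0/L_p^1\otimes\wedge^2T_p^*$; these are different graded pieces, so the claim that the degree-two component ``collapses to $\partial_{2,1}$ of some element of $g^2\otimes T_p^*$'' is degree-inconsistent. The injectivity of $\partial_{2,1}$ on $g^2\otimes T_p^*$ (from $(g^2)^{(1)}=0$) has already been spent: it is what pins down the degree-two part of a subspace satisfying \eqref{HS1} once its degree-one part is fixed; it says nothing about how $\omega_H$ varies as the degree-one part $h_H$ varies.

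What is actually needed, and what the paper supplies, is a genuinely quadratic cancellation, not a linear Spencer-complex argument. Condition \eqref{HS1} forces the second-order components $h^i_{j_1j_2,k}$ of an \eqref{HS1}-subspace to be explicit quadratic expressions in the first-order components $h^i_{j,k}$ (system \eqref{SH1spc} in the appendix), and the difference $\omega_{\tilde H}-\omega_H$ is itself a quadratic expression in these $h$'s; one must substitute \eqref{SH1spc} and verify that the two independent components of the $g^2$-valued result (there are only two, since $\dim g^2=2$) cancel identically. The paper makes this computation tractable by first using transitivity of $\Gamma$ on $J^1\pi$ to place $\theta_2$ over the normalized point $0_1$, where a reference subspace $H$ with all $h^i_{j,k}=h^i_{j_1j_2,k}=0$ exists and satisfies \eqref{HS1}. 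Your plan never produces the quantitative relations \eqref{SH1spc}, and the Jacobi identity alone will not substitute for them; until that cancellation is exhibited (invariantly or in coordinates), the proof has a gap exactly where the theorem's content lies.
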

\begin{proof} See section \ref{PrfThOmgIndH} of Appendix.\end{proof} 
Put  
$$
  \omega_{\theta_2}=\omega_H\,,
$$  
where $H$ is an arbitrary horizontal subspace of $\A_{\theta_2}$ satisfying \eqref{HS1}. From theorem \ref{ThOmgIndH} we get that $\omega_{\theta_2}$ is well defined. Thus for every point $\theta_2\in J^2\pi$, we define in the natural way the 2--form $\omega_{\theta_2}$ on $T_p$ with values in $g^2$. This means that the following statement holds.
\begin{theorem}\label{DffInv} The field of tensors on $J^2\pi$ 
$$
 \omega^2:\theta_2\longmapsto\omega_{\theta_2}\,.
$$
is a differential invariant of the action of $\Gamma$ on the bundle $\pi$.
\end{theorem}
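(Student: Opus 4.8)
The plan is to obtain the statement as a direct corollary of the equivariance facts already established, with no further computation. Recall that, for $\theta_2\in J^2\pi$ with $p=\pi_2(\theta_2)$, we have set $\omega_{\theta_2}=\omega_H\in g^2\otimes(\wedge^2T_p^*)$, where $H$ is any horizontal subspace of $\A_{\theta_2}$ satisfying \eqref{HS1}, and Theorem~\ref{ThOmgIndH} guarantees this is independent of the choice of $H$. To say that $\omega^2:\theta_2\mapsto\omega_{\theta_2}$ is a differential invariant of the action of $\Gamma$ means precisely that the assignment is $\Gamma$--equivariant: for every point transformation $f$ of the base of $\pi$ and every $\theta_2$ in the domain of $f^{(2)}$,
$$
  \omega_{f^{(2)}(\theta_2)}\bigl(f_*(X_p),f_*(Y_p)\bigr)=j_p^{3}f\bigl(\omega_{\theta_2}(X_p,Y_p)\bigr)\,,\qquad\forall\,X_p,Y_p\in T_p\,,
$$
where $j_p^{3}f\colon W_p/L_p^2\to W_{f(p)}/L_{f(p)}^2$ is the map induced by $f_*$ and, restricted to $g^2\subset L_p^1/L_p^2\cong T_p\otimes S^2(T_p^*)$, is the natural action of $f_*$ on the tensor space, cf.\ \eqref{Ismrph}. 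Everything needed for this identity is already in place, so the proof will just assemble the pieces.

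First I would fix such an $f$ and $\theta_2$, and choose a horizontal subspace $H\subset\A_{\theta_2}$ satisfying \eqref{HS1}; such subspaces exist by the proposition asserting the existence of horizontal subspaces of $\A_{\theta_2}$ satisfying \eqref{HS1} (stated just before Proposition~\ref{TrnsfrmSHS}). By Proposition~\ref{TrnsfrmSHS}, the subspace $j_p^{4}f(H)\subset\A_{f^{(2)}(\theta_2)}$ is again horizontal and satisfies \eqref{HS1}; hence, applying the definition of $\omega_{\theta_2}$ and Theorem~\ref{ThOmgIndH} at the point $f^{(2)}(\theta_2)$, one gets $\omega_{f^{(2)}(\theta_2)}=\omega_{j_p^{4}f(H)}$. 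Next I would invoke item~\ref{TrnsfrmOmgH} of the proposition following formula~\eqref{OmgH}, specialised to $k=1$, which states exactly that $j_p^{3}f\bigl(\omega_H(X_p,Y_p)\bigr)=\omega_{j_p^{4}f(H)}\bigl(f_*(X_p),f_*(Y_p)\bigr)$. Combining these two facts together with $\omega_{\theta_2}=\omega_H$ yields the displayed equivariance identity.

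The only point that needs a brief remark is that $j_p^{3}f$ carries $g^2\otimes(\wedge^2T_p^*)$ into $g^2\otimes(\wedge^2T_{f(p)}^*)$, so that the identity is genuinely one between $g^2$--valued $2$--forms; this is automatic, since the right-hand side equals $\omega_{f^{(2)}(\theta_2)}(f_*X_p,f_*Y_p)$, which is of this type by Theorem~\ref{ThOmgIndH}, and it may also be seen directly from the fact that $g^2$ is cut out by the point-transformation-invariant system \eqref{g2}, i.e.\ $g^2$ is a characteristic subspace of $L_p^1/L_p^2$. I do not expect any real obstacle here: the substantive work --- existence of horizontal subspaces satisfying \eqref{HS1}, the transformation law for $\omega_H$ in item~\ref{TrnsfrmOmgH}, and, above all, the independence statement of Theorem~\ref{ThOmgIndH} --- has already been done, and what is left is only the routine check that these assemble into the naturality of the field $\omega^2$.
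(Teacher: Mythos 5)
Your proposal is correct and follows essentially the same route as the paper: the paper derives Theorem \ref{DffInv} directly from the well--definedness of $\omega_{\theta_2}$ (Theorem \ref{ThOmgIndH}) together with the naturality of the construction, which is exactly the combination of Proposition \ref{TrnsfrmSHS} and item \ref{TrnsfrmOmgH} that you assemble. You merely spell out explicitly the equivariance identity that the paper leaves as an immediate consequence of the phrase ``defined in the natural way.''
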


We can consider $\omega^2$ as a horizontal differential 2--form on $J^2\pi$ with values in $g^2$:
$$
  \omega^2(X,Y)=\omega_{\theta_2}\bigl((\pi_2)_*(X),(\pi_2)_*(Y)\bigr)\,,
$$
where $X$ and $Y$ are tangent vectors to $J^2\pi$ at the point $\theta_2$.

In the standard coordinates, $\omega^2$ is expressed in following way, see section \ref{SctnOmg2Expr} of Appendix,
\begin{multline}\label{Obstrtn1}
  \omega^2=\Bigl(F^1\bigl(2\frac{\partial}{\partial x^1}
  \otimes (dx^1\odot dx^1)
  +\frac{\partial}{\partial x^2}\otimes (dx^1\odot dx^2)\bigr)\\
  +F^2\bigl(2\frac{\partial}{\partial x^2}\otimes (dx^2\odot dx^2)
  +\frac{\partial}{\partial x^1}\otimes
  (dx^1\odot dx^2)\bigr)\Bigr)\\
  \otimes\,(dx^1\wedge\,dx^2)\,,
\end{multline}
where $F^1$ and $F^2$ are defined by \eqref{F1F2}.

Let $\E$ be equation \eqref{eq} and let $S_{\E}$ be the corresponding section of $\pi$. By $\omega^2_{\E}$ we denote the restriction of $\omega^2$ to the image of the section $j_2S_{\E}$. From Proposition \ref{LnrstnCndtnPr}, we get the following statement.
\begin{theorem} The equation $\E$ can be reduced to the linear form by a point transformation iff
  $\omega^2_{\E}=0$.
\end{theorem}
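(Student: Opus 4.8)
The plan is to combine the formula \eqref{Obstrtn1} for the invariant $\omega^2$ in standard coordinates with Proposition \ref{LnrstnCndtnPr}. The key observation is that $\omega^2_{\E}$, by definition, is obtained by restricting $\omega^2$ to the image of $j_2S_{\E}$, which amounts to substituting the coordinate expressions $u^i = a^{i-1}(x,y)$, $u^i_j = \partial a^{i-1}/\partial x^j$, $u^i_{j_1j_2} = \partial^2 a^{i-1}/\partial x^{j_1}\partial x^{j_2}$ into the functions $F^1$ and $F^2$ appearing in \eqref{F1F2}. So the value of $\omega^2$ at the point $j^2_pS_{\E}$ is governed entirely by the two scalars $F^1(j^2_pS_{\E})$ and $F^2(j^2_pS_{\E})$.

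First I would note that, since the tensors $e_1 = 2\frac{\partial}{\partial x^1}\otimes(dx^1\odot dx^1) + \frac{\partial}{\partial x^2}\otimes(dx^1\odot dx^2)$ and $e_2 = 2\frac{\partial}{\partial x^2}\otimes(dx^2\odot dx^2) + \frac{\partial}{\partial x^1}\otimes(dx^1\odot dx^2)$ form a basis of $g^2$ by \eqref{Gnrtrs_g2}, the $g^2$-valued $2$-form $\omega^2$ in \eqref{Obstrtn1} can be rewritten as $\omega^2 = \bigl(F^1 e_1 + F^2 e_2\bigr)\otimes(dx^1\wedge dx^2)$. Because $e_1, e_2$ are linearly independent and $dx^1\wedge dx^2$ is nonzero, $\omega^2_{\E}$ vanishes identically on the base exactly when $F^1(j^2_pS_{\E}) = 0$ and $F^2(j^2_pS_{\E}) = 0$ for every $p$, i.e. exactly when the functions $F^1$ and $F^2$, viewed as functions of the coefficients $a^0,\dots,a^3$ and their derivatives via \eqref{F1F2}, vanish on the domain of $\E$.

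Then I would invoke Proposition \ref{LnrstnCndtnPr}, which states precisely that the conditions $F^1 = 0$ and $F^2 = 0$ are necessary and sufficient for the existence of a point transformation reducing $\E$ to linear form. Combining this with the previous paragraph yields the equivalence: $\E$ is linearizable by a point transformation if and only if $\omega^2_{\E} = 0$. One should also remark that the invariance of $\omega^2$ under $\Gamma$ (Theorem \ref{DffInv}) makes the statement coherent — the vanishing of $\omega^2_{\E}$ is a point-transformation-invariant condition, as it must be for a property preserved under point equivalence.

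There is essentially no hard step here; the statement is a direct corollary of the explicit formula \eqref{Obstrtn1} together with the already-cited Proposition \ref{LnrstnCndtnPr}. The only point requiring a word of care is the bookkeeping that the restriction of the horizontal form $\omega^2$ to $\operatorname{im}(j_2S_{\E})$ really does produce the functions $F^1, F^2$ evaluated along the $2$-jet prolongation of $S_{\E}$ — but this is immediate from the definition of $j_2S_{\E}$ and the fact that $\omega^2$ is $\pi_2$-horizontal, so its pullback under $j_2S_{\E}$ is just the substitution of the prolonged section's coordinates into \eqref{Obstrtn1}.
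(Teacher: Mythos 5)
Your argument is correct and is exactly the paper's proof: the theorem is stated there as an immediate consequence of the coordinate expression \eqref{Obstrtn1} (so that $\omega^2_{\E}=0$ iff $F^1=F^2=0$ along $j_2S_{\E}$) together with Proposition \ref{LnrstnCndtnPr}. Your extra remarks on the basis $e_1,e_2$ of $g^2$ and on the horizontality of $\omega^2$ just make explicit what the paper leaves implicit.
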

Thus the differential invariant $\omega^2$ is a unique obstruction to linearization of equations \eqref{eq} by point transformations.

\subsection{Derived invariants}
Applying operations of tensor algebra to the tensor $\omega_{\theta_2}$ on $T_p$, we can obtain in the natural way new  tensors on $T_p$. Indeed, applying the operation of contraction
$$
  T_p\otimes(T_p^*\odot T_p^*)\otimes(\wedge^2T_p^*)\longrightarrow T_p^*\otimes(\wedge^2T_p^*)\,,\quad
  (t^i_{jk,rs})\mapsto (t^m_{mk,rs})\,,
$$
to the tensor $(2/5)\omega_{\theta_2}$, we get the tensor 
$$
  \alpha_{\theta_2}=(F^1(\theta_2)dx^1+F^2(\theta_2)dx^2)\otimes(dx^1\wedge\;dx^2)\,.
$$
Thus the tensor field 
$$
  \alpha^2:\theta_2\longmapsto\alpha_{\theta_2}
$$
on $J^2\pi$ is a differential invariant of the action of $\Gamma$ on $\pi$. 

Taking into account that $\dim T_p=2$, we obtain that the contraction 
$$
  T_p\otimes(T_p^*\wedge T_p^*)\longrightarrow T_p^*\,,\quad (t^i_{jk})\mapsto(t^m_{mk})\,,
$$
is an isomorphism. Therefore the contraction
$$
 T_p\otimes(\wedge^2T_p^*)\otimes(\wedge^2T_p^*)\longrightarrow T_p^*\otimes(\wedge^2T_p^*)\,,\quad
  (t^i_{r_1s_1,r_2s_2})\mapsto (t^m_{ms_1,r_2s_2})\,.
$$
is isomorphism also. It is easy to check that the pseudovector of weight 2 
\begin{equation}\label{beta2}
  \beta_{\theta_2}=(F^2(\theta_2)\frac{\partial}{\partial x^1}
  -F^1(\theta_2)\frac{\partial}{\partial x^2})\otimes(dx^1\wedge\;dx^2)^2
\end{equation}
is the inverse image of the tensor $(1/2)\alpha_{\theta_2}$ under this isomorphism. This means that $\beta_{\theta_2}$ is defined in the natural way. Thus the field of pseudovectors on $J^2\pi$ 
$$
  \beta^2:\theta_2\longmapsto\beta_{\theta_2}
$$  
is a differential invariant of the action of $\Gamma$ on $\pi$.

\section{Differential invariants in $J^3\pi$}\label{Inv3}
In this section, we construct differential invariants on $(\pi_{3,2})^{-1}(\Orb_2^0)$.

\subsection{Spaces $\A_{\theta_3}$}
Let  $\theta_3\in(\pi_{3,2})^{-1}(\Orb_2^0)$, $\theta_2=\pi_{3,2}(\theta_3)$ and $p=\pi_2(\theta_2)$. 
Consider the space $\A_{\theta_3}$. It is a subspace of the space $W_p/L_p^4$. From the system of equations describing the space $\A_{\theta_3}$, see Proposition \ref{IstrpSpcDscrb}, we obtain the natural filtration of $\A_{\theta_3}$ and the corresponding graduate space $\G\A_{\theta_3}$:
\begin{equation}\label{GrIstrpSpc3}
  \A_{\theta_3}\supset\g_{\theta_2}\supset\g_{\theta_2}^2\supset\{0\}\supset\{0\}\,,\quad
  \G\A_{\theta_3}=T_p\oplus g_{\theta_2}^1\oplus g^2\oplus\{0\}\oplus\{0\},
\end{equation}
where $g_{\theta_2}^1$ is described by \eqref{g1} and $g^2$ is described by \eqref{g2}. In addition, we obtain the following statement
\begin{proposition}
In the standard coordinates, components $X^i$ and $X^i_j$ of elements of $\A_{\theta_3}$ are connected by the       equations
     \begin{equation}\label{IstrSpc3}
      \begin{aligned}
       2F^1\cdot X^1_1+F^2\cdot X^2_1+F^1\cdot X^2_2
       &=-D_1F^1\cdot X^1-D_2F^1\cdot X^2\\
       F^2\cdot X^1_1+F^1\cdot X^1_2+2F^2\cdot X^2_2
       &=-D_1F^2\cdot X^1-D_2F^2\cdot X^2,
      \end{aligned}
     \end{equation}   
     where $F^1$ and $F^2$ are defined by \eqref{F1F2}
\end{proposition}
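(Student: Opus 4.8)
The plan is to read off \eqref{IstrSpc3} from the step-form reduction of the linear system defining $\A_{\theta_3}$, by the same mechanism already used for $\g_{\theta_2}$ and $\g_{\theta_3}$. By Proposition~\ref{IstrpSpcDscrb} (with $k=2$), a $4$-jet $j_p^4X=(p,X^i,X^i_j,X^i_{j_1j_2},X^i_{j_1j_2j_3},X^i_{j_1j_2j_3j_4})$ lies in $\A_{\theta_3}$ iff its $30$ components solve the homogeneous linear system
$$
  \bigl(D_\sigma\psi^i_X\bigr)(\theta_3)=0,\qquad i=1,\ldots,4,\quad 0\le|\sigma|\le 2,
$$
with $\psi_X$ given by \eqref{DfrmtnVlst1} and $D_\sigma$ by \eqref{Dj}. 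First I would bring this $24\times 30$ system to step-form, eliminating the highest-weight unknowns first ($X^i_{j_1j_2j_3j_4}$, then $X^i_{j_1j_2j_3}$, then $X^i_{j_1j_2}$), and isolate the rows that survive involving only $X^i$ and $X^i_j$. By the graded description \eqref{GrIstrpSpc3} of $\A_{\theta_3}$ over $\Orb_2^0$ there are exactly two such independent rows, and the claim is that they are \eqref{IstrSpc3}.

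The observation that organizes this computation is that substituting $X^i=0$ collapses the displayed system into the one defining $\g_{\theta_2}$ inside $L_p^0/L_p^4$ (Proposition~\ref{IstrpAlgkPr} with $k=2$), whose step-form is \eqref{GrIstrpAlg2}--\eqref{g1}: its surviving degree-$1$ relations are exactly \eqref{g1}, namely $2F^1X^1_1+F^2X^2_1+F^1X^2_2=0$ and $F^2X^1_1+F^1X^1_2+2F^2X^2_2=0$. Hence the two surviving rows for $\A_{\theta_3}$ must coincide with the left-hand sides of \eqref{g1} up to an inhomogeneous right-hand side linear in $X^1,X^2$, and the whole content of the proof is to identify that right-hand side as $-D_1F^\ell\cdot X^1-D_2F^\ell\cdot X^2$ for $\ell=1,2$. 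This is the natural answer: in \eqref{DfrmtnVlst1} the only part of each $\psi^i_X$ that is linear in the undifferentiated components is the transport term $-u^i_1X^1-u^i_2X^2$, so the $X^1,X^2$-dependence of any constraint on $X^i_j$ can enter only through total derivatives of the $u^i$'s, i.e. only through the operators $D_1,D_2$; since $F^1,F^2$ in \eqref{F1F2} are built from the $\theta_2$-coordinates $u^i,u^i_j,u^i_{j_1j_2}$, the corrections that accumulate after the two total differentiations and the eliminations are precisely $D_jF^\ell$. Equivalently, $F^1dx^1+F^2dx^2$ is, up to the volume factor, the relative invariant $\alpha^2$, and restoring the $X^i$-terms amounts to adding its transport along $X$, governed by $D_jF^\ell$.

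I expect the main obstacle to be the bookkeeping of the elimination rather than any conceptual point: one must check that exactly these two combinations of the $24$ equations survive with all $X^i_{j_1j_2}$ (and higher) eliminated, that they are independent on $\Orb_2^0$, and that the inhomogeneous part is precisely $-D_jF^\ell\cdot X^j$ with no extra $\theta_3$-dependent multiples of $X^i$. A convenient cross-check at each stage is the dimension count: on $\Orb_2^0$ one has $\dim\g_{\theta_2}=4$ and $\A_{\theta_3}=H\oplus\g_{\theta_2}$ for any horizontal subspace $H$, so $\dim\A_{\theta_3}=6$, matching $\G\A_{\theta_3}=T_p\oplus g^1_{\theta_2}\oplus g^2\oplus\{0\}\oplus\{0\}$; this forces exactly two surviving degree-$1$ relations and excludes any further coupling of $X^i$ to $X^i_j$. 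As an alternative to the brute-force reduction one could prolong the known description of $\A_{\theta_2}$ by one jet order, mirroring the passage from \eqref{IstrAlg0} to \eqref{g1} for the isotropy algebras, but the total-derivative terms $D_jF^\ell$ still have to be produced by an explicit computation.
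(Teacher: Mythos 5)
Your proposal follows the paper's own route: the paper obtains \eqref{IstrSpc3} exactly as you describe, by writing out the system $\bigl(D_{\sigma}(\psi^i_X)\bigr)(\theta_3)=0$, $0\le|\sigma|\le 2$, from Proposition \ref{IstrpSpcDscrb} and reducing it to step-form (by computer algebra), with the specialization $X^i=0$ recovering \eqref{g1} just as you note. The one step you defer to ``explicit computation'' --- that the inhomogeneous part is exactly $-D_jF^{\ell}\cdot X^j$ with no further $X^i$-terms --- is precisely the step the paper also delegates to machine reduction, and your closing observation that \eqref{IstrSpc3} is the vanishing at $p$ of the Lie derivative along $X$ of the relative invariant $(F^1dx^1+F^2dx^2)\otimes(dx^1\wedge dx^2)$ restricted to the section would in fact supply a clean, computation-free verification of that step.
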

\begin{proposition} There are horizontal subspaces $H\subset\A_{\theta_3}$ satisfying the condition
\begin{equation}\label{HS3}
  \rho_{3,1}\bigl(\,\bigl[\,j_p^4X\,,\,j_p^4Y\,\bigr]\,\bigr)=0\quad\forall\,j_p^4X, j_p^4Y\in H\,.
\end{equation}
\end{proposition}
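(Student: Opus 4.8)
The plan is to mimic the two-step argument used in the proof of the analogous statement for $\A_{\theta_2}$ (the proposition producing horizontal subspaces satisfying \eqref{HS1}), but carried out one level higher and with the crucial difference that now $\g_{\theta_2}^1$ is not all of $L_p^0/L_p^1$ — it is the $1$-dimensional (or $3$-dimensional) subspace cut out by \eqref{g1}. First I would fix an arbitrary horizontal subspace $H_0\subset\A_{\theta_3}$ and form the tensor $t_{H_0}\in T_p\otimes(\wedge^2T_p^*)$ by $t_{H_0}(X_p,Y_p)=\rho_{3,0}(\omega_{H_0}(X_p,Y_p))$. Replacing $H_0$ by the subspace $\tilde H$ associated to a linear map $f:T_p\to\g_{\theta_2}$ changes $t_{H_0}$ by $\partial_{1,1}(f')$, where $f'=\rho_{4,1}\circ f\in g_{\theta_2}^1\otimes T_p^*$; here $\partial_{1,1}:g_{\theta_2}^1\otimes T_p^*\to T_p\otimes\wedge^2T_p^*$ is the Spencer operator \eqref{SpncrOpr}. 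So the first step reduces to showing that $t_{H_0}$ lies in the image of $\partial_{1,1}$, equivalently that the relevant Spencer complex built from $g_{\theta_2}^1$ and $(g_{\theta_2}^1)^{(1)}$ is exact in the middle, i.e. that $\partial_{0,2}$ is zero (automatic, since $\wedge^3T_p^*=0$ as $\dim T_p=2$) and the cohomology vanishes. This is where I would use the dimension count \eqref{Dim_g1}: $\dim(g_{\theta_2}^1)^{(1)}=2$, $\dim g_{\theta_2}^1=1$ (in the generic case, since $\theta_3$ lies over $\Orb_2^0$), $\dim(g_{\theta_2}^1\otimes T_p^*)=2$, $\dim(T_p\otimes\wedge^2T_p^*)=2$, so the Euler characteristic of $0\to(g_{\theta_2}^1)^{(1)}\to g_{\theta_2}^1\otimes T_p^*\to T_p\otimes\wedge^2T_p^*\to 0$ is $2-2+2-$? — wait, one must be careful: the complex to use is the one whose first term is $(g_{\theta_2}^1)^{(1)}$ mapping by $\partial_{2,0}$ into $g_{\theta_2}^1\otimes T_p^*$, then $\partial_{1,1}$ into $T_p\otimes\wedge^2T_p^*$, then $0$. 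Its alternating sum of dimensions is $2-2+2=2\ne 0$, so I should instead verify exactness directly by noting that $\partial_{2,0}$ is injective (this is a general fact about the symbol complex, or follows because a jet in $(g_{\theta_2}^1)^{(1)}$ whose first prolongation vanishes as a form is itself zero) and then checking surjectivity of $\partial_{1,1}$ by an explicit linear-algebra computation using the equations \eqref{g1}, or — more robustly — by invoking that the exactness of the first-order Spencer sequence for $g_{\theta_2}^1$ is equivalent to $g_{\theta_2}^1$ being involutive in degree one, which for a subspace of $T_p\otimes T_p^*$ in two variables follows from $\dim(g_{\theta_2}^1)^{(1)}$ taking its maximal possible value given $\dim g_{\theta_2}^1$. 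I expect this to be the main obstacle: getting the right Spencer complex and the right exactness statement, because unlike the $\A_{\theta_2}$ case the relevant subspace $g_{\theta_2}^1$ is a proper, $\theta_2$-dependent subspace, so the exact sequences $0\to L_p^1/L_p^2\to L_p^0/L_p^1\otimes T_p^*\to T_p\otimes\wedge^2T_p^*\to 0$ used before are no longer directly available and must be replaced by the prolongation sequence of $g_{\theta_2}^1$.

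For the second step I would take $H_1\subset\A_{\theta_3}$ horizontal satisfying \eqref{HS0}-type condition from step one (i.e. $\rho_{3,0}\circ\omega_{H_1}=0$, the $k=2$ analogue of the first-step conclusion), and now deform it by $f:T_p\to\g_{\theta_2}^2=g^2$, which changes the tensor $t_{H_1}^{(1)}(X_p,Y_p)=\rho_{3,1}(\omega_{H_1}(X_p,Y_p))\in g_{\theta_2}^1\otimes\wedge^2T_p^*$ by $\partial_{2,1}(f')$ with $f'=\rho_{4,2}\circ f\in g^2\otimes T_p^*$. Here the relevant complex is $0=(g^2)^{(1)}\xrightarrow{\partial_{3,0}} g^2\otimes T_p^*\xrightarrow{\partial_{2,1}} g_{\theta_2}^1\otimes\wedge^2T_p^*\to 0$; note the target is $g_{\theta_2}^1\otimes\wedge^2T_p^*$ (one-dimensional in the generic case) rather than $L_p^0/L_p^1\otimes\wedge^2T_p^*$ as in the $\A_{\theta_2}$ proof, so I must check that $\omega_{H_1}$ actually takes values in $g_{\theta_2}^1$ rather than merely in $L_p^0/L_p^1$ — this should follow from Proposition \ref{BrktIstrpSpc} applied at level $k=2$, which gives $[\A_{\theta_3},\A_{\theta_3}]\subset\A_{\theta_2}$, combined with the graded description \eqref{GrIstrpSpc3} and the fact that the $\rho_{3,1}$-component of an element of $\A_{\theta_2}$ lives in $g_{\theta_2}^1$. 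Then, using $(g^2)^{(1)}=\{0\}$ from \eqref{FstPrlng} (so $\partial_{3,0}=0$ trivially) and $\dim g^2=2$ from \eqref{Dim_g2}, exactness of this short complex amounts to surjectivity of $\partial_{2,1}$ onto the one-dimensional space $g_{\theta_2}^1\otimes\wedge^2T_p^*$, which is a short explicit check; injectivity of $\partial_{3,0}$ is vacuous. Once $\partial_{2,1}$ is surjective, there is a (unique, since $\ker\partial_{2,1}=\im\partial_{3,0}=0$) choice of $f$ killing $t_{H_1}^{(1)}$, producing a horizontal subspace $H$ with $\rho_{3,0}\circ\omega_H=0$ and $\rho_{3,1}\circ\omega_H=0$, i.e. exactly \eqref{HS3}. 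Finally I would remark, as in the $\A_{\theta_2}$ case, that the construction leaves a positive-dimensional family of admissible $H$ (deformations by maps $T_p\to\g_{\theta_2}^3=\{0\}$ are trivial, but the first-step freedom survives), so "there are" such subspaces; the uniqueness refinement, if needed, would be stated separately. Throughout, the identifications \eqref{Ismrph} and the coordinate forms \eqref{g1}, \eqref{g2}, \eqref{IstrSpc3} make every Spencer operator an explicit matrix, so all exactness claims reduce to rank computations that I would carry out with computer algebra exactly as the paper does for $\g_{\theta_2}$ and $\g_{\theta_3}$.
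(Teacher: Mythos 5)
Your overall strategy is exactly the paper's: a two--step deformation argument, first killing the $T_p$--component of $\omega_H$ using the prolongation sequence of $g^1_{\theta_2}$, then killing the $L^0_p/L^1_p$--component using the sequence built on $g^2$ with $(g^2)^{(1)}=\{0\}$. However, two of your concrete claims are wrong, and one of them sends you on an unnecessary detour while the other would break your second step if carried out literally.

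First, $\dim g^1_{\theta_2}=2$, not $1$: over $\Orb_2^0$ the two equations \eqref{g1} are independent (if $F^1\neq 0$ one contains $X^2_2$ and not $X^1_2$, the other contains $X^1_2$), so they cut a $2$--dimensional subspace out of the $4$--dimensional $L^0_p/L^1_p$; equivalently $\dim\g_{\theta_2}=4$ and $\dim g^2=2$ force $\dim g^1_{\theta_2}=2$. With the correct count the alternating sum for $0\to(g^1_{\theta_2})^{(1)}\to g^1_{\theta_2}\otimes T^*_p\to T_p\otimes\wedge^2T^*_p\to 0$ is $2-4+2=0$, so there is no Euler--characteristic obstruction and the paper's exact sequence is dimensionally consistent; your proposed workaround (injectivity of $\partial_{2,0}$ plus a direct rank computation for $\partial_{1,1}$) is harmless but was prompted by a miscount. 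Second, in your step two the replacement of the target $(L^0_p/L^1_p)\otimes\wedge^2T^*_p$ by $g^1_{\theta_2}\otimes\wedge^2T^*_p$ is not legitimate. After step one the bracket $[\,j^4_pX,\,j^4_pY\,]$ lies in $\g_{\theta_1}$, and $\rho_{3,1}(\g_{\theta_1})$ is \emph{all} of $L^0_p/L^1_p$ (the degree--one part of $\G\g_{\theta_1}$ is $L^0_p/L^1_p$); nothing places $\rho_{3,1}\circ\omega_{H_1}$ in the $\theta_2$--dependent subspace $g^1_{\theta_2}$. Worse, $[\,T_p,\,g^2\,]$ is a fixed subspace of $L^0_p/L^1_p$ and is not contained in $g^1_{\theta_2}$ in general, so your $\partial_{2,1}$ does not even map into the target you assign it, and that target is $2$--dimensional, not $1$--dimensional. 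The correct statement, and the one the paper uses, is that $\partial_{2,1}:g^2\otimes T^*_p\to(L^0_p/L^1_p)\otimes\wedge^2T^*_p$ is injective (because $(g^2)^{(1)}=\{0\}$) between spaces of equal dimension $4$, hence an isomorphism, so the remaining component of $\omega_{H_1}$ can be killed, and uniquely so among subspaces with fixed $\rho_{4,1}$--projection.
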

\begin{proof} First step. Show that there exist horizontal subspaces $H\subset\A_{\theta_3}$ satisfying the condition
\begin{equation}\label{HS03}
  \rho_{3,0}\bigl(\,\bigl[\,j_p^4X\,,\,j_p^4Y\,\bigr]\,\bigr)=0\quad\forall\,j_p^4X, j_p^4Y\in H
\end{equation}
To this end consider two arbitrary horizontal subspaces $H$ and $\tilde H$ of $\A_{\theta_3}$. They generate the linear map $f_{H,\tilde H}\in \g_{\theta_2}\otimes T_p^*$. Then $\rho_{4,1}\circ f_{H,\tilde H}\in g_{\theta_2}^1\otimes T_p^*$. Now the existence of horizontal subspaces satisfying condition \eqref{HS03} follows from \eqref{Dim_g1} and 
the exact sequence
$$
  0\rightarrow (g_{\theta_2}^1)^{(1)}\xrightarrow{\partial_{2,0}} g_{\theta_2}^1\otimes T_p^*
  \xrightarrow{\partial_{1,1}} T_p\otimes\wedge^2T_p^*
  \rightarrow 0. 
$$

Second step. Let $H$ be an arbitrary horizontal subspace of $\A_{\theta_3}$ satisfying \eqref{HS03}, let $f:T_p\to\g_{\theta_2}^2$ be a linear map, and let $\tilde H$ be a unique horizontal subspace of $\A_{\theta_3}$ satisfying the condition $f_{H,\tilde H}=f$. Then obviously,
\begin{equation}\label{HS03Prj}
  \rho_{4,1}(\,\tilde H\,) =\rho_{4,1}(\,H\,)\,.
\end{equation}
On the other hand, if $\tilde H$ is an arbitrary horizontal subspace of $\A_{\theta_3}$ satisfying \eqref{HS03Prj}, then $f_{H,\tilde H}\in \g_{\theta_2}^2\otimes T_p^*$. It is clear now that there are many horizontal subspaces $\tilde H\subset\A_{\theta_3}$ satisfying \eqref{HS03Prj}. From \eqref{GrIstrpSpc3}, we get that $\rho_{4,2}\circ f_{H,\tilde H}\in g^2\otimes T_p^*$. Finally, from the exact sequence
$$
  0 = (g^2)^{(1)}\xrightarrow{\partial_{3,0}} g^2\otimes T_p^*
  \xrightarrow{\partial_{2,1}} (L_p^0/L_p^1)\otimes\wedge^2T_p^*
  \rightarrow 0\,,
$$
we obtain that there exists a unique horisontal subspace satisfying \eqref{HS3} among subspaces $\tilde H$ satisfying \eqref{HS03Prj}.
\end{proof}

Let $H$ be a horizontal subspace of $\A_{\theta_3}$ satisfying \eqref{HS3} and let $j_p^4X, j_p^4Y\in H$. Then
$[j_p^4X, j_p^4Y]\in \g_{\theta_1}^2$. Obviously, $\rho_{3,2}([j_p^4X, j_p^4Y])$ is element of $g^2$ and it is equal to $\omega^2(X_{\theta_2}^{(2)},Y_{\theta_2}^{(2)})$. It follows from \eqref{FstPrlng1} that this element of $g^2$ defines $[j_p^4X, j_p^4Y]$ uniquely. Thus we get the following 
\begin{remark} The bracket between vectors of a horizontal subspace of $\A_{\theta_3}$ satisfying \eqref{HS3} does not lead to a new differential invariant differing of $\omega^2$. 
\end{remark}

\subsection{Invariant form $\omega^3$}
Taking into account the previous remark, we will investigate the bracket between vectors of a horizontal subspace $H\subset\A_{\theta_3}$ and elements of the algebra $\g_{\theta_2}$ to construct new differential invariants. 

To minimize the arbitrariness in our constructions, we will consider a horizontal subspace $H$ satisfying \eqref{HS3} and the element $[j_p^4X, j_p^4Y]\in\g_{\theta_1}^2\subset\g_{\theta_1}$, where $j_p^4X, j_p^4Y\in H$. 

Let $j_p^4U, j_p^4Z\in H$, then $w=\bigl[ j_p^3Z,[j_p^4X, j_p^4Y]\bigr]\in\g_{\theta_0}$  and $[j_p^2U,w]\in W_p/L_p^1$. There exists a unique vector $j_p^4\tilde Z\in H$ such that $\tilde Z_p=\rho_{1,0}([j_p^2U,w])$. Then $[j_p^2U,w]-j_p^1\tilde Z$ is element of $L_p^0/L_p^1$. Thus the formula  
$$
  t_H(X_p,Y_p,Z_p,U_p)=[j_p^2U,w]-j_p^1\tilde Z\quad\forall\,U_p,Z_p,X_p,Y_p\in T_p
$$
defines the tensor
$$
  t_H\in (T_p\otimes T_p^*)\otimes T_p^*\otimes T_p^*\otimes(T_p^*\wedge T_p^*)\,. 
$$ 
This tensor depends on the choice of a horizontal subspace $H$ satisfying \eqref{HS3}. We transform this tensor to obtain a new tensor independent of this choice. To this end consider $t_H$ in detail.
\begin{lemma}\label{Lmm} 
$$
   t_{H}\in T_p\otimes( T_p^*\odot T_p^*\odot T_p^*)\otimes(T_p^*\wedge T_p^*)  
$$
\end{lemma}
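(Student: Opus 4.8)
The plan is to show that the tensor $t_H\in (T_p\otimes T_p^*)\otimes T_p^*\otimes T_p^*\otimes(T_p^*\wedge T_p^*)$ in fact lands in the subspace $T_p\otimes(T_p^*\odot T_p^*\odot T_p^*)\otimes(\wedge^2T_p^*)$, i.e.\ that two symmetry properties hold: (a) the ``matrix'' index pair $(T_p\otimes T_p^*)$ is actually a scalar times the identity, so it can be absorbed into one covariant slot, and (b) the resulting trilinear form in $X_p,Y_p,Z_p,U_p$ — now with four lower $T_p^*$ legs after the $\wedge^2$ is split off, wait, three free covariant slots beyond the $\wedge^2$ — is totally symmetric in those three arguments. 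I would first fix the $\wedge^2T_p^*$ factor (antisymmetrization in $X_p,Y_p$) and treat it as a passive bookkeeping factor throughout, since $\omega^2$ and all the brackets that feed into $w=[\,j_p^3Z,[j_p^4X,j_p^4Y]\,]$ already know about it.

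The key computational input is that $[j_p^4X,j_p^4Y]\in\g_{\theta_1}^2$ (established just before the lemma), hence by the isomorphism \eqref{FstPrlng1} and \eqref{Dim_g2} it is determined by $\omega^2(X^{(2)}_{\theta_2},Y^{(2)}_{\theta_2})\in g^2$, which by \eqref{Gnrtrs_g2} is a combination $F^1e_1+F^2e_2$ of the two explicit symmetric tensors $e_1,e_2$. So I would write $w$ explicitly: $w=\rho_{3,2}^{-1}$ applied to that element, an element of $\g_{\theta_0}$ of degree $\geq 2$, i.e.\ $w\in\g_{\theta_0}^2=g^2$ up to higher order, and compute $[\,Z_p,w\,]$ using \eqref{brckt2}, the map $L^1_p/L^2_p\to L^0_p/L^1_p$. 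Concretely $[\,Z_p, F^1e_1+F^2e_2\,]$ is a linear map $T_p\to T_p$ depending linearly on $Z_p$; using the explicit form \eqref{Gnrtrs_g2} of $e_1,e_2$ one checks that $[\,Z_p, e_a\,]$ is, for each $a$, a symmetric object in the sense needed. Then $[\,j_p^2U, w\,]-j_p^1\tilde Z$ is the degree-$0$ (i.e.\ $L^0_p/L^1_p$) part of $[\,U, [\,Z, w\,]\,]$ after subtracting the horizontal correction $j_p^1\tilde Z$; since the subtraction only removes the $T_p$-part and $H$ satisfies \eqref{HS3}, the bracket $[\,j_p^4U, \cdot\,]$ with a degree-$\geq 2$ jet contributes nothing new to spoil symmetry.

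The two symmetry claims I would then verify are: symmetry in $Z_p\leftrightarrow U_p$ and symmetry of the $(T_p\otimes T_p^*)$ slot with the remaining covariant slot (which after using that $\dim T_p=2$ and the trace-isomorphisms of the ``Derived invariants'' subsection lets one rewrite $T_p\otimes T_p^*$-valued data as scalar-valued). For the $Z\leftrightarrow U$ symmetry the Jacobi identity is the natural tool: $[\,j_p^2U,[j_p^3Z,v]\,]-[\,j_p^2Z,[j_p^3U,v]\,]=[\,[j_p^2U,j_p^3Z],v\,]$ where $v=[j_p^4X,j_p^4Y]$, and $[\,j_p^2U,j_p^3Z\,]\in W_p/L_p^1$ projects to something in $\K_{\theta_1}$ modulo $\g_{\theta_1}$ — but $v\in\g_{\theta_1}$ of degree $2$, so bracketing a degree-$\leq 1$ class against it drops the total degree below $1$ and the result vanishes in $L^0_p/L^1_p$. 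That forces the antisymmetric part in $Z,U$ to vanish. For the matrix-slot symmetry one uses the explicit structure: $[\,Z_p,e_a\,]$, as a $T_p\to T_p$ map, has the property that its symmetrization with $Z_p$ reconstructs a fully symmetric $3$-tensor, which is exactly the content of $(g^2)^{(1)}=\{0\}$ from \eqref{FstPrlng} read backwards — the failure of symmetry would produce a nonzero element of $(g^2)^{(1)}$.

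I expect the main obstacle to be the $Z_p\leftrightarrow U_p$ (and the matrix-slot) symmetry bookkeeping: one must track carefully which filtration level each bracket lives in, because the $t_H$ construction subtracts off the horizontal representative $j_p^1\tilde Z$ at an intermediate stage, and that subtraction interacts with the Jacobi-identity argument — one has to check that the corrections $\tilde Z$ for different orderings of $Z,U$ assemble consistently, or equivalently that the ambiguity they introduce is itself symmetric and hence harmless. I would handle this by doing the whole computation in the explicit standard coordinates of \eqref{DfrmtnVlst1}--\eqref{g2}, where $e_1,e_2$, the operator $D^k_j$, and the bracket \eqref{brckt2} are all completely explicit, reducing everything to a finite linear-algebra verification in the $2$-variable jet spaces; this is routine but tedious, and I would relegate the coordinate computation to the Appendix, stating here only that it yields the asserted symmetries.
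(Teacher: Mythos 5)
Your proposal, in the only form in which it can actually be completed, coincides with the paper's proof. The paper proves Lemma \ref{Lmm} by writing $H=\{(X^i,h^i_{j,k}X^k,\ldots)\}$ and $[j_p^4X,j_p^4Y]=(0,0,g^i_{j_1j_2},g^i_{j_1j_2,j_3})$ in standard coordinates, computing $[j_p^2U,w]-j_p^1\tilde Z$ explicitly (formula \eqref{omg03}), and observing that the resulting coefficient
$$
h^r_{j,m}g^i_{rk}-g^r_{jk}h^i_{r,m}-g^r_{mk}h^i_{r,j}-g^r_{jm}h^i_{r,k}+h^r_{j,k}g^i_{rm}+h^r_{m,k}g^i_{rj}+g^i_{jmk}
$$
is totally symmetric in $j,m,k$, using only $h^i_{j,k}=h^i_{k,j}$ (i.e.\ condition \eqref{HS03}) and the symmetry of the graded components of the bracket. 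That is precisely the ``finite linear-algebra verification'' you relegate to the appendix, so your plan closes in the end.

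Two remarks on the structural shortcuts you add. The Jacobi-identity argument for the $Z\leftrightarrow U$ symmetry is correct and is a genuine gain over checking that transposition by hand, but your degree bookkeeping is off: what kills $[[j_p^4U,j_p^4Z],v]$ in $W_p/L_p^1$ is that \eqref{HS03} forces $[j_p^4U,j_p^4Z]\in L_p^0$ (and \eqref{HS3} even gives $L_p^1$) while $v\in L_p^1$, so the double bracket lies in $[L_p^0,L_p^1]\subset L_p^1$; a general ``degree-$\leq 1$ class'' with nonzero $T_p$-component would bracket with $v$ into $L_p^0$ only, which does \emph{not} vanish in $L_p^0/L_p^1$. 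By contrast, your justification of the remaining symmetry (the covariant slot of the $T_p\otimes T_p^*$ value against the $Z$- and $U$-slots) via $(g^2)^{(1)}=\{0\}$ is not valid: \eqref{FstPrlng} is the uniqueness statement for horizontal subspaces and carries no information about symmetry of the bracket. The leading contribution is symmetric simply because $g^2\subset T_p\otimes S^2(T_p^*)$ and the third-order component of $[j_p^4X,j_p^4Y]$ is a symmetric Taylor coefficient; but $t_H$ also contains the six correction terms in $h^i_{j,k}$ produced by the first-order part of $H$ and by the subtraction of $j_p^1\tilde Z$, and their mutual cancellation under permutation of $j,m,k$ is exactly the coordinate identity displayed above. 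For that part there is no substitute for the computation the paper performs.
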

\begin{proof} See section \ref{PrfLmm} of Appendix \end{proof}

Recall that the ideal $g^2=\g_{\theta_0}\cap (L_p^1/L_p^2)$ of the isotropy algebra $\g_{\theta_0}$ is defined by \eqref{g2}. It can be considered as a subspace of $T_p\otimes( T^*_p\odot T^*_p)$. There exists a natural projection
$$
  \mu:T_p\otimes( T_p^*\odot T_p^*)\longrightarrow g^2\,,\quad
  \mu:(X^i_{jk})\mapsto \bigl(\,\frac13\,(\delta^i_j\,X^r_{kr}+\delta^i_k\,X^r_{jr})\,\bigr)\,,
$$
where $\delta^i_j$ is the Kronecker symbol. This projection generates the natural projection
$$
  \tilde\mu:T_p\otimes( T_p^*\odot T_p^*)\otimes T_p^*\otimes(T_p^*\wedge T_p^*)\longrightarrow
  g^2\otimes T_p^*\otimes(T_p^*\wedge T_p^*).
$$
Taking into account that
$$
  T_p\otimes( T_p^*\odot T_p^*\odot T_p^*)\otimes(T_p^*\wedge T_p^*)\subset
  T_p\otimes( T_p^*\odot T_p^*)\otimes T_p^*\otimes(T_p^*\wedge T_p^*),
$$
we can consider the tensor $\tilde\mu(t_H)\in g^2\otimes T_p^*\otimes(T_p^*\wedge T_p^*)$ as a 1-form with values in $g^2\otimes(T_p^*\wedge T_p^*)$. Then the contraction 
\begin{gather*}
  \bigl(T_p\otimes (T_p^*\wedge T_p^*)^2\bigr)\kch\Bigl(\bigl(g^2\otimes(T_p^*\wedge T_p^*)\bigr)
  \otimes T_p^*\Bigr)\subset g^2\otimes(T_p^*\wedge T_p^*)^3\,,\\
  (t^i_{r_1s_1r_2s_2})\kch(p^i_{jk,r_3s_3,l})=(t^m_{r_1s_1r_2s_2}p^i_{jk,r_3s_3,m})\,,
\end{gather*}
defines the new tensor
$$
  \omega^3_H=\beta_{\theta_2}\kch\tilde\mu(t_H)\in g^2\otimes(T_p^*\wedge T_p^*)^3,
$$  
where $\beta_{\theta_2}\in T_p\otimes (T_p^*\wedge T_p^*)^2$ and is defined by \eqref{beta2}. 
\begin{theorem}\label{ThrmOmg3H}
  The tensor $\omega_H^3$ is independent of the choice of a horizontal subspace $H\subset\A_{\theta_3}$ satisfying
  \eqref{HS3}.
\end{theorem}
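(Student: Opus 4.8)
The plan is to mimic the two–step argument used for $\omega^2$ in Theorem~\ref{ThOmgIndH}, now working one filtration level higher. Fix a horizontal subspace $H\subset\A_{\theta_3}$ satisfying \eqref{HS3}. By the preceding proposition, any other such subspace $\tilde H$ is obtained from $H$ via a linear map $f=f_{H,\tilde H}:T_p\to\g_{\theta_2}$, and \eqref{HS03Prj} together with the uniqueness in \eqref{HS3} forces this $f$ to take values in $\g_{\theta_2}^2$; moreover, since $\g_{\theta_2}^2$ projects isomorphically (via $\rho_{3,2}$) onto $g^2$ (cf. \eqref{GrIstrpAlg2} and \eqref{FstPrlng1}), we may identify $f$ with an element $f'=\rho_{4,2}\circ f\in g^2\otimes T_p^*$. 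First I would compute how the raw tensor $t_H$ (built from the nested brackets $w=[j_p^3Z,[j_p^4X,j_p^4Y]]$ and then $[j_p^2U,w]-j_p^1\tilde Z$) changes under $H\mapsto\tilde H$: substituting $j_p^4X-f(X_p)$ etc. into the iterated bracket and discarding terms that land in higher filtration pieces (which vanish because $[\g_{\theta_2}^2,\g_{\theta_2}^2]\subset L_p^3$ and $[T_p,g^2]\subset L_p^0/L_p^1$ already, so double occurrences of $f$ contribute nothing to the $L_p^0/L_p^1$–component), I expect the variation $t_{\tilde H}-t_H$ to be an explicit bilinear-in-$f'$, or rather linear-in-$f'$, expression — a coboundary-type term of the form (symmetrized) $[\,\cdot\,,f'(\cdot)]$.

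The second step is to show that this variation is annihilated by the composition $\beta_{\theta_2}\kch\tilde\mu(\cdot)$. Here the key mechanism is the same one used to pin down $\omega^2$: the relevant Spencer complex
$$
 0 = (g^2)^{(1)}\xrightarrow{\partial_{3,0}} g^2\otimes T_p^*\xrightarrow{\partial_{2,1}} (L_p^0/L_p^1)\otimes\wedge^2T_p^*\rightarrow 0
$$
is exact (as already invoked), and $(g^2)^{(1)}=\{0\}$ by \eqref{FstPrlng}, so $\partial_{2,1}$ is injective on $g^2\otimes T_p^*$. I would check that the $g^2$–valued part of $t_{\tilde H}-t_H$, after applying $\tilde\mu$, equals $\partial_{2,1}(f')$ up to a nonzero constant and a tensor factor of $dx^1\wedge dx^2$; since $\tilde\mu(t_H)$ is the piece carrying a free $T_p^*$ slot, contracting with $\beta_{\theta_2}$ (which kills exactly the $\partial$-image direction because $\beta_{\theta_2}$ is, by \eqref{beta2}, the canonical generator dual to the cokernel computations that produced $\omega^2$) should make the coboundary term disappear. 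Equivalently: the change is a pure $\partial_{2,1}$–coboundary, and $\beta_{\theta_2}\kch(-)$ is constructed precisely to factor through $\operatorname{coker}\partial_{2,1}=0$.

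The main obstacle I anticipate is bookkeeping rather than conceptual: one must verify that under $H\mapsto\tilde H$ the correction vector $\tilde Z$ (defined by $\tilde Z_p=\rho_{1,0}([j_p^2U,w])$) also changes, and that its change exactly cancels the unwanted $T_p$–valued and $T_p^*$–symmetric-but-not-fully-symmetric parts, so that what survives after $\tilde\mu$ really is the clean coboundary $\partial_{2,1}(f')$ and nothing else. In other words, Lemma~\ref{Lmm} (full symmetry of $t_H$ in its three lower slots) has to be used not just for $H$ but compatibly along the variation, and the interplay of the $\g_{\theta_2}$-valued $f$ with the definition of $w$ (which involves $j_p^3Z$, hence only the $\g_{\theta_0}$-truncation) needs care: the term $[j_p^3(Z-f(Z_p)),[j_p^4(X-f(X_p)),j_p^4(Y-f(Y_p))]]$ must be expanded and all cross terms tracked. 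I would organize this as a direct computation in the standard coordinates, using \eqref{IstrSpc3} to eliminate the $X^i_j$ in terms of the $X^i$ whenever a representative of $\A_{\theta_3}$ is needed, and then invoke exactness of the displayed Spencer sequence to conclude. Since the statement is flagged as reducing to an Appendix computation, I would likewise defer the explicit coordinate verification to an appendix, presenting here only the reduction to the vanishing of a $\partial_{2,1}$–coboundary under $\beta_{\theta_2}\kch\tilde\mu$.
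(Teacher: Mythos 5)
There is a genuine gap at the very first step of your argument, and it undermines the whole mechanism you propose. You claim that for two horizontal subspaces $H,\tilde H\subset\A_{\theta_3}$ both satisfying \eqref{HS3}, the map $f_{H,\tilde H}$ is forced to take values in $\g_{\theta_2}^2$. That is false, and if it were true the theorem would be vacuous: the uniqueness statement in the preceding proposition says precisely that among subspaces with a \emph{fixed} projection $\rho_{4,1}(H)$ there is exactly one satisfying \eqref{HS3}, so a difference $f_{H,\tilde H}$ with values in $\g_{\theta_2}^2$ would give $\tilde H=H$. The actual residual freedom sits one level lower: subspaces satisfying \eqref{HS03} are parametrized (in their $X^i_j$--components) by an affine space modeled on $(g_{\theta_2}^1)^{(1)}$, which by \eqref{Dim_g1} is $2$--dimensional and in particular nonzero, and this two--parameter freedom survives intact into the family of subspaces satisfying \eqref{HS3}. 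In the paper's computation it appears explicitly as the free components $h^1_{1,1}$ and $h^2_{1,1}$ in \eqref{HS13}. Consequently the variation of $t_H$ is \emph{not} a $\partial_{2,1}$--coboundary of a $g^2\otimes T_p^*$--valued map, and the exactness of the Spencer complex $0=(g^2)^{(1)}\to g^2\otimes T_p^*\to (L_p^0/L_p^1)\otimes\wedge^2T_p^*\to 0$ is not the mechanism that makes $\omega^3_H$ well defined.

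What actually happens — and what your proposal does not establish — is that $\tilde\mu(t_H)$ still depends on the two free parameters $h^1_{1,1},h^2_{1,1}$ (the paper's components $\tilde t^i_{jmk}$ visibly contain them), and it is only the final contraction with $\beta_{\theta_2}=(F^2\partial_{x^1}-F^1\partial_{x^2})\otimes(dx^1\wedge dx^2)^2$ that cancels them: e.g.\ the $h$--terms of $F^2\tilde t^2_{121}-F^1\tilde t^2_{122}$ cancel identically. This cancellation is a specific algebraic identity tied to the form of \eqref{g1} and \eqref{IstrSpc3}; it is not an instance of ``$\beta_{\theta_2}\kch(-)$ factors through $\operatorname{coker}\partial_{2,1}=0$''. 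The paper therefore proves the theorem by computing $t_H$, $\tilde\mu(t_H)$ and $\omega^3_H$ in coordinates with the two parameters left free and checking that they drop out at the last step. To repair your argument you would need to compute the dependence of $t_H$ on the $(g_{\theta_2}^1)^{(1)}$--component of the variation and show directly that $\beta_{\theta_2}\kch\tilde\mu(\cdot)$ annihilates exactly that dependence; as written, your proof addresses only a part of the variation that is already excluded by uniqueness and misses the part that carries the actual content of the theorem.
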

\begin{proof} See section \ref{SctnThrmOmg3H} of Appendix \end{proof} 

By $\omega_{\theta_3}$ we denote the tensor $3\omega_H^3$. From the proof of Theorem \ref{ThrmOmg3H}, we have that $\omega_{\theta_3}$ is described by the following formula in the standard coordinates\,:
$$
  \omega_{\theta_3}=\bigl(\Psi^1(\theta_3)\cdot e_1+\Psi^2(\theta_3)\cdot e_2\bigr)\otimes(dx^1\wedge dx^2)^3,
$$
where $e_1$ and $e_2$ are generators of $g^2$ defined by \eqref{Gnrtrs_g2}, 
\begin{align*}
  \Psi^1(\theta_3)&=3(\omega^3_H)^2_{12}/(\lambda^3)= - (F^1)^2u^2 + 2F^1F^2u^1 - 3(F^2)^2u^0\\
        &\phantom{=3(\omega^3_H)^2_{12}/(\lambda^3)= - (F^1)^2u^2 }- F^1F^1_y + 4F^1F^2_x - 3F^1_xF^2,\\
  \Psi^2(\theta_3)&=3(\omega^3_H)^1_{12}/(\lambda^3)= - 3(F^1)^2u^3 + 2F^1F^2u^2 - (F^2)^2u^1\\ 
        &\phantom{=3(\omega^3_H)^1_{12}/(\lambda^3)= - 3(F^1)^2u^3 }+ 3F^1F^2_y - 4F^1_yF^2 + F^2F^2_x,
\end{align*}

and $\theta_3=(\,x^j,\,u^i,\,\ldots,\,u^i_{j_1j_2j_3})$. It is clear that the tensor $\omega_{\theta_3}$ is defined by the point $\theta_3\in(\pi_{3,2})^{-1}(\Orb_2^0)\subset J^3\pi$ in the natural way. Therefore, the map
\begin{equation}\label{omg3}
  \omega^3:\theta_3\longmapsto\omega_{\theta_3}\quad\forall\,\theta_3\in(\pi_{3,2})^{-1}(\Orb_2^0)  
\end{equation}
is a differential invariant.

\subsection{Derived invariants}
Applying the contraction
\begin{gather*}
  T_p\otimes(T_p^*\odot T_p^*)\otimes(T_p^*\wedge T_p^*)^3\longrightarrow
  T_p^*\otimes(T_p^*\wedge T_p^*)^3\,,\\
  (t^i_{jk,r_1s_1r_2s_2r_3s_3})\mapsto (t^m_{mk,r_1s_1r_2s_2r_3s_3})\,,
\end{gather*}
to $\omega_{\theta_3}$, we obtain in the natural way the new tensor at the point $\theta_3\in(\pi_{3,2})^{-1}(\Orb_2^0)$
\begin{equation}\label{alph3}
  \alpha_{\theta_3}=(\Psi^1dx^1+\Psi^2dx^2)\otimes(dx^1\wedge dx^2)^3.
\end{equation}
Therefore the field of tensors on $(\pi_{3,2})^{-1}(\Orb_2^0)$
$$
  \alpha^3: {\theta_3}\longmapsto  \alpha_{\theta_3}
$$
is a differential invariant of the action of $\Gamma$ on $\pi$.

The contraction of $\beta_{\theta_2}$ and $\alpha_{\theta_3}$, where $\theta_2=\pi_{3,2}(\theta_3)$,  gives in the natural way the next tensor
\begin{equation}\label{nu}
 \nu_{\theta_3}=\frac13(\beta_{\theta_2}\kch\alpha_{\theta_3})=F^3(dx^1\wedge dx^2)^5,
\end{equation}
where $F^3$ is defined by \eqref{F3}.
Therefore the tensor field on $(\pi_{3,2})^{-1}(\Orb_2^0)$ 
$$
  \nu:\theta_3\longmapsto \nu_{\theta_3}
$$
is a differential invariant of the action of $\Gamma$ on $\pi$. First this invariant was obtained by R. Liouville in \cite{Lvll}.

The contraction $T_p\otimes(T_p^*\wedge T_p^*)\to T_p^*$ is an isomorphism. Therefore the contraction
$$
  T_p\otimes(T_p^*\wedge T_p^*)\otimes(T_p^*\wedge T_p^*)^3\longrightarrow T_p^*\otimes(T_p^*\wedge T_p^*)^3
$$
is an isomorphism too. The tensor
$$
  \beta_{\theta_3}=(\Psi^2\frac{\partial}{\partial x^1}
  -\Psi^1\frac{\partial}{\partial x^1})\otimes(dx^1\wedge dx^2)^4
$$
is the inverse image of the tensor $\alpha_{\theta_3}\in T_p^*\otimes(T_p^*\wedge T_p^*)^3$. Therefore the tensor field on $(\pi_{3,2})^{-1}(\Orb_2^0)$
$$
  \beta^3:{\theta_3}\longmapsto \beta_{\theta_3}
$$
is a differential invariant of the action of $\Gamma$ on $\pi$.

For every point $\theta_3\in\Orb_3^0$, the tensors $\nu_{\theta_3}$, $\beta_{\theta_2}$, and $\beta_{\theta_3}$, where $\theta_2=\pi_{3,2}(\theta_3)$, generate in the natural way the vectors $\xi_{1_{\theta_3}}$ and $\xi_{2_{\theta_3}}$:
\begin{equation}\label{XiZeta}
  \xi_{1_{\theta_3}}=\frac{1}{(F^3)^{2/5}}(F^2\frac{\partial}{\partial x^1}-F^1\frac{\partial}{\partial x^2}),\quad
  \xi_{2_{\theta_3}}=\frac{1}{(F^3)^{4/5}}(\Psi^2\frac{\partial}{\partial x^1}-\Psi^1\frac{\partial}{\partial x^2})
\end{equation}
Therefore the fields
$$
  \xi_1^3:\theta_3\longmapsto\xi_{1_{\theta_3}}\,,\quad\xi_2^3:\theta_3\longmapsto\xi_{2_{\theta_3}}\,,\quad
  \forall\,\theta_3\in\Orb_3^0
$$
are differential invariants of the action of $\Gamma$ on $\pi$.
\begin{proposition} For every point $\theta_3\in\Orb_3^0$, the vectors $\xi_{1_{\theta_3}},\,\xi_{2_{\theta_3}}$ of $T_p$ are linearly independent.
\end{proposition}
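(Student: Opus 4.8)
The plan is to reduce the claimed linear independence to the non\-vanishing of a single $2\times2$ determinant, and then to recognize that determinant as (a constant multiple of) the scalar invariant $F^3$, whose non\-vanishing is precisely the condition defining $\Orb_3^0$.

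First I would use that $\theta_3\in\Orb_3^0$ forces $F^3(\theta_3)\neq 0$ (Theorem~\ref{OrbtThrm}), so the scalar factors $(F^3)^{-2/5}$ and $(F^3)^{-4/5}$ appearing in \eqref{XiZeta} are well defined and non\-zero. Consequently $\xi_{1_{\theta_3}}$ and $\xi_{2_{\theta_3}}$ are linearly independent in the $2$-dimensional space $T_p$ if and only if the vectors
$$
  v_1=F^2\frac{\partial}{\partial x^1}-F^1\frac{\partial}{\partial x^2},\qquad
  v_2=\Psi^2\frac{\partial}{\partial x^1}-\Psi^1\frac{\partial}{\partial x^2}
$$
are linearly independent, and the latter holds precisely when
$$
  \det\begin{pmatrix} F^2 & -F^1\\ \Psi^2 & -\Psi^1\end{pmatrix}=F^1\Psi^2-F^2\Psi^1\neq 0 .
$$
So the entire statement is equivalent to proving $F^1\Psi^2-F^2\Psi^1\neq 0$ on $\Orb_3^0$.

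Next I would evaluate this determinant not from the explicit expressions for $\Psi^1,\Psi^2$ but from the invariant identity \eqref{nu}. By the coordinate formulas \eqref{beta2} for $\beta_{\theta_2}$ and \eqref{alph3} for $\alpha_{\theta_3}$, the contraction $\beta_{\theta_2}\kch\alpha_{\theta_3}$ pairs the vector part $F^2\partial/\partial x^1-F^1\partial/\partial x^2$ of $\beta_{\theta_2}$ against the covector part $\Psi^1dx^1+\Psi^2dx^2$ of $\alpha_{\theta_3}$, which gives exactly
$$
  \beta_{\theta_2}\kch\alpha_{\theta_3}=\bigl(F^2\Psi^1-F^1\Psi^2\bigr)\,(dx^1\wedge dx^2)^5 .
$$
Comparing with $\nu_{\theta_3}=\frac13(\beta_{\theta_2}\kch\alpha_{\theta_3})=F^3\,(dx^1\wedge dx^2)^5$ from \eqref{nu} yields $F^2\Psi^1-F^1\Psi^2=3F^3$, i.e. $F^1\Psi^2-F^2\Psi^1=-3F^3$. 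Since $F^3(\theta_3)\neq 0$ on $\Orb_3^0$, the determinant above equals $-3F^3\neq 0$, and the linear independence of $\xi_{1_{\theta_3}}$ and $\xi_{2_{\theta_3}}$ follows.

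There is no real obstacle here: the only point requiring care is the bookkeeping of which tensor slots are contracted, so as to see that the determinant governing the independence of the two invariant vector fields is, up to the factor $-3$, the scalar invariant $F^3$ already known to be non\-zero on $\Orb_3^0$. One could alternatively substitute the explicit formulas for $F^1,F^2,\Psi^1,\Psi^2$ and check $F^1\Psi^2-F^2\Psi^1=-3F^3$ by direct expansion, but that routine computation is made unnecessary by the identity \eqref{nu}.
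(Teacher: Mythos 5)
Your proof is correct and follows essentially the same route as the paper, which simply reduces the claim to the identity $-F^2\Psi^1+F^1\Psi^2=-3F^3\neq 0$ on $\Orb_3^0$. Your one refinement --- obtaining that determinant identity by unwinding the contraction in \eqref{nu} rather than by direct expansion of $F^1,F^2,\Psi^1,\Psi^2$ --- is a legitimate and slightly tidier way to see it, since \eqref{nu} is already established earlier in the paper.
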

\begin{proof} It is easy to calculate that $-F^2\Psi^1+F^1\Psi^2=-3F^3\neq 0$ for every $\theta_3\in\Orb_3^0$.
\end{proof} 

Now we can define the vector fields $\xi_1$ and $\xi_2$ on $(\pi_{\infty,3})^{-1}(\Orb_3^0)$ by the formulas 
\begin{equation}\label{XiZeta1}
  \xi_1=\frac{1}{(F^3)^{2/5}}(F^2D_1-F^1D_2),\quad
  \xi_2=\frac{1}{(F^3)^{4/5}}(\Psi^2D_1-\Psi^1D_2),
\end{equation}
where $D_j$ is the operator of total derivative w.r.t. $x^j$, $j=1,2$, see \eqref{Dj}. It is clear that these vector fields are invariant w.r.t. every lifted point transformation $f^{(\infty)}$. This means that $\xi_1$ and $\xi_2$ are differential invariants of the action of $\Gamma$ on $\pi$.

It follows from the last proposition that the vector fields $\xi_1$ and $\xi_2$ are linear independent in every point $\theta_{\infty}\in (\pi_{\infty,3})^{-1}(\Orb_3^0)$. 

\section{Scalar differential invariants}
\subsection{Algebra of scalar differential invariants}
Recall that a function defined in $J^k\pi$ and invariant w.r.t. all lifted point transformations $f^{(k)}$ is {\it a scalar differential invariant of order $k$}. 

In this section, we construct scalar differential invariants in the bundles $(\pi_{k,3})^{-1}(\Orb_3^0)\subset J^k\pi$, $k>3$.

By $A_k$ we denote algebra of all scalar differential invariants of order $k$ in $(\pi_{k,3})^{-1}(\Orb_3^0)$, $k > 3$.
It is clear that if $I\in A_k$, then $(\pi_{k+1,k})_*(I)\in A_{k+1}$. We will identify these invariants. Thus we have the filtration
$$
   A_0\subset  A_1\subset\ldots\subset A_k\subset\ldots
$$

Clearly, that every function of $k$-order scalar differential invariants is a $k$-order scalar differential invariant.
Let $\{\,I^1,\,\ldots,\,I^{N_k}\,\}$ be a maximal collection of $k$-order functionally independent scalar differential invariants. Then this collection generates $A_k$, that is every invariant $I\in A_k$ is some function of $I^1,\,\ldots,\,I^{N_k}$.

Let $\theta_k$ be a generic point of $J^k\pi$ and let $p=\pi_k(\theta_k)$. Then obviously the following formula holds.
\begin{equation}\label{NmbrScInv}
  \dim J^k\pi=\dim(W_p/L_p^{k+2})-\dim\g_{\theta_k}+ N_k\,.
\end{equation}
In section \ref{SctnIstrAlg}, we obtained the following results for a generic point $\theta_k$: $\dim\g_{\theta_k}=6$ if $k=0,1$,  $\dim\g_{\theta_k}=4$ if $k=2$, and  $\dim\g_{\theta_k}=0$ if $k\geq 3$. Using formula \eqref{NmbrScInv}, we get now the following table
$$
  \begin{array}{c|c|c|c|c}
  k & \dim J^k\pi & \dim(W_p/L_p^{k+2}) & \dim\g_{\theta_k} & N_k\\
  \hline
  0 & 8   & 14 & 6 & 0\\
  1 & 16  & 22 & 6 & 0\\
  2 & 28  & 32 & 4 & 0\\
  3 & 44  & 44 & 0 & 0\\
  4 & 64  & 58 & 0 & 6\\
  5 & 88  & 74 & 0 & 14\\
  \ldots & \ldots & \ldots & \ldots & \ldots\\ 
  k & 2k^2+6k+8 & k^2+7k+14 & 0 & k^2-k-6
  \end{array}
$$
From this table, we get 
\begin{proposition}
 \begin{enumerate}
  \item The algebra $A_k$, $0\leq k\leq 3$, is trivial, that is it consists of constants.
  \item The algebra $A_k$, $k\geq 4$, is generated by $k^2-k-6$ functionally independent scalar differential invariants     of order $k$. In particular, $A_4$ is generated by 6 independent invariants and $A_5$ is generated by 14 independent      invariants.
 \end{enumerate}
\end{proposition}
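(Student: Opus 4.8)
The plan is to read both statements off the table, using the entry $N_k$, which records the number of functionally independent scalar differential invariants of order $k$ near a generic point; since a maximal collection of $N_k$ such invariants generates $A_k$ (as recalled just above), statement (1) is the assertion that $N_k=0$ for $0\le k\le 3$ and statement (2) is the assertion that $N_k=k^2-k-6$ for $k\ge 4$. So the real content is the justification of formula \eqref{NmbrScInv}.

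First I would prove \eqref{NmbrScInv} by identifying $N_k$ with the codimension in $J^k\pi$ of the generic orbit of $\Gamma$. Fix a generic $\theta_k\in J^k\pi$ and put $p=\pi_k(\theta_k)$. By Proposition \ref{VlLftVctFld} the assignment $j_p^{2+k}X\mapsto X^{(k)}_{\theta_k}$ is a well-defined linear map $\lambda_{\theta_k}:W_p/L_p^{2+k}\to T_{\theta_k}J^k\pi$, and its image is the tangent space to $\Orb(\theta_k)$ at $\theta_k$ (because $\Gamma$ acts on $J^k\pi$ by the lifted flows). Since $(\pi_k)_*\bigl(X^{(k)}_{\theta_k}\bigr)=X_p$, any jet in $\ker\lambda_{\theta_k}$ lies in $L_p^0/L_p^{2+k}$, so $\ker\lambda_{\theta_k}=\g_{\theta_k}$ by \eqref{IstrpAlgk}; hence $\dim\Orb(\theta_k)=\dim(W_p/L_p^{2+k})-\dim\g_{\theta_k}$, and its codimension in $J^k\pi$ equals the right-hand side of \eqref{NmbrScInv}. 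At a generic point $\theta_k\mapsto\dim\g_{\theta_k}$ is locally constant (this follows from the step-form reductions of subsection \ref{SctnIstrAlg}), so near $\theta_k$ the orbits form a regular foliation of that codimension; a scalar differential invariant of order $k$ is by definition a function constant along the orbits, and locally such functions are exactly the functions of $N_k$ functionally independent ones, namely coordinates transverse to the foliation. This proves \eqref{NmbrScInv} and exhibits the generating collection $\{I^1,\ldots,I^{N_k}\}$ of $A_k$.

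Then I would substitute the dimensions. By subsection \ref{SctnIstrAlg}, at a generic point $\dim\g_{\theta_k}=6$ for $k=0,1$, $\dim\g_{\theta_k}=4$ for $k=2$, and $\dim\g_{\theta_k}=0$ for $k\ge 3$; together with the dimension counts of the table this gives $N_0=N_1=N_2=N_3=0$ and $N_k=\dim J^k\pi-\dim(W_p/L_p^{2+k})=k^2-k-6$ for $k\ge 3$, in particular $N_3=0$, $N_4=6$, $N_5=14$. For statement (1): for $0\le k\le 3$ the generic orbit is open and dense in $J^k\pi$ — by Theorem \ref{OrbtThrm} it is all of $J^k\pi$ when $k=0,1$, it is $\Orb_2^0$ when $k=2$, and it is $\Orb_3^0$ when $k=3$ — so any scalar differential invariant of order $k$, being continuous and constant on this dense orbit, is constant, and $A_k$ is trivial. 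For statement (2): for $k\ge 4$ we have $N_k=k^2-k-6>0$, so $A_k$ is generated by $k^2-k-6$ functionally independent invariants of order $k$, the cases $k=4,5$ giving $6$ and $14$.

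The main obstacle will be the one non-formal step: the passage from the fact that the generic orbit has codimension $N_k$ to the conclusion that there are exactly $N_k$ functionally independent scalar differential invariants of order $k$. This rests on the local triviality of the orbit foliation at generic points and on the fact that $\Gamma$-invariant functions separate its leaves locally, which is standard in Lie's theory of pseudogroups; once it is granted, the remainder is bookkeeping with the dimensions already computed in the preceding sections.
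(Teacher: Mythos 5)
Your proposal is correct and follows essentially the same route as the paper: the paper's own proof consists of asserting formula \eqref{NmbrScInv} as obvious, inserting the dimensions $\dim\g_{\theta_k}$ computed in subsection \ref{SctnIstrAlg} to build the table, and reading off both statements from the column $N_k$. Your only addition is to justify \eqref{NmbrScInv} explicitly via the map $j_p^{2+k}X\mapsto X^{(k)}_{\theta_k}$ with kernel $\g_{\theta_k}$ and the identification of $N_k$ with the codimension of the generic orbit, which is exactly the argument the paper leaves implicit.
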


\subsection{Generators}
Let $\theta_4$ be a point of $(\pi_{4,3})^{-1}(\Orb_3^0)\subset J^4\pi$, $\theta_3=\pi_{4,3}(\theta_4)$ and $p=\pi_4(\theta_4)$. Consider the space $\A_{\theta_4}$. From Proposition \ref{AlgIstr3}, we have that $\g_{\theta_3}=\{0\}$. On the other hand, from system defining $\A_{\theta_4}$, see Proposition \ref{IstrpSpcDscrb}, we have that $\A_{\theta_4}$ contains horizontal subspaces. Thus, $\A_{\theta_4}$ is a horizontal subspace. By $\omega_{\theta_4}$ we denote the 2--form $\omega_{\A_{\theta_4}}$ on $T_p$ with values in $\A_{\theta_3}$ defined by formula \eqref{OmgH}. Then $\rho_{4,0}\circ\omega_{\theta_4}$ is a 2--form on $T_p$ with values in $T_p$.
Decomposing the vector $\rho_{4,0}\circ\omega_{\theta_4}(\,\xi_{1_{\theta_3}},\,\xi_{2_{\theta_3}}\,)$ over the base $\{\,\xi_{1_{\theta_3}},\,\xi_{2_{\theta_3}}\,\}$ of $T_p$
$$
  \rho_{4,0}\circ\omega_{\theta_4}(\,\xi_{1_{\theta_3}},\,\xi_{2_{\theta_3}}\,)
  =I^1(\theta_4)\xi_{1_{\theta_3}}+I^2(\theta_4)\xi_{2_{\theta_3}},
$$
we obtain the numbers $I^1(\theta_4)$ and $I^2(\theta_4)$ in the natural way. Thus the functions  
$$
  I^1:\theta_4\mapsto I^1(\theta_4),\quad I^2:\theta_4\mapsto I^2(\theta_4)
$$ 
are scalar differential invariants on $(\pi_{4,3})^{-1}(\Orb_3^0)\subset J^4\pi$.

Next scalar invariants can be obtained in the following way.
Let $j_p^4Z$ be the vector of the horizontal subspace $\rho_{5,4}(\A_{\theta_4})$ of $\A_{\theta_3}$ such that
$Z_p=\rho_{4,0}\bigl(\omega_{\theta_4}(\xi_{1_{\theta_3}},\,\xi_{2_{\theta_3}})\bigr)$.
Then $\omega_{\theta_4}(\xi_{1_{\theta_3}},\,\xi_{2_{\theta_3}})-j_p^4Z\,\in\g_{\theta_2}$.
It follows that $\rho_{4,1}\bigl(\omega_{\theta_4}(\xi_{1_{\theta_3}},\,\xi_{2_{\theta_3}})-j_p^4Z\,\bigr)\,\in T_p\otimes T_p^*$. By $\Delta$ we denote this element of $T_p\otimes T_p^*$.
We have that $\Delta(\xi_{1_{\theta_3}})$ and $\Delta(\xi_{2_{\theta_3}})$ are vectors of $T_p$. The decompositions of these vectors over the base $\{\,\xi_{1_{\theta_3}},\,\xi_{2_{\theta_3}}\,\}$ 
$$
  \Delta(\xi_{1_{\theta_3}})=I^3(\theta_4)\xi_{1_{\theta_3}}+I^4(\theta_4)\xi_{2_{\theta_3}},\quad
  \Delta(\xi_{2_{\theta_3}})=I^5(\theta_4)\xi_{1_{\theta_3}}+I^6(\theta_4)\xi_{2_{\theta_3}}
$$
give the numbers $I^j(\theta_4)$, $j=3,4,5,6$. Clearly that these numbers are constructed in the natural way. Thus the functions 
$$
  I^j:\theta_4\mapsto I^j(\theta_4),\quad j=3,4,5,6,
$$
are new scalar differential invariants on $(\pi_{4,3})^{-1}(\Orb_3^0)\subset J^4\pi$.
The following theorem can be proved by direct calculations with the help of computer algebra.
\begin{theorem} The collection $\{\,I^1,\,I^2,\,\ldots,\,I^6\,\}$ is a maximal collection of functionally independent  
  invariants of the algebra $A_4$. 
\end{theorem}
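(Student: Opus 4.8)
The plan is to reduce the statement to a single functional-independence check, since the structural half is already available. The last Proposition of the previous subsection gives that $A_4$ is generated by $6$ (in general $k^2-k-6$) functionally independent scalar differential invariants of order $4$; in particular $N_4=6$, and, as recalled in the paragraph preceding formula~\eqref{NmbrScInv}, \emph{any} functionally independent family in $A_4$ of cardinality $N_4$ is automatically a maximal one and generates $A_4$. Moreover each $I^a$ ($a=1,\dots,6$) lies in $A_4$ by construction: it is obtained from $\theta_4$ by manifestly natural operations — the horizontal space $\A_{\theta_4}$, its $\A_{\theta_3}$--valued $2$--form $\omega_{\theta_4}$ of \eqref{OmgH}, the projections $\rho_{4,0}\circ\omega_{\theta_4}$ and $\Delta=\rho_{4,1}\bigl(\omega_{\theta_4}(\xi_{1_{\theta_3}},\xi_{2_{\theta_3}})-j_p^4Z\bigr)$, and the expansion of the resulting vectors over the invariant frame $\{\xi_{1_{\theta_3}},\xi_{2_{\theta_3}}\}$ of \eqref{XiZeta} — all of which commute with the lifted transformations $f^{(4)}$. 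Hence it suffices to prove that $I^1,\dots,I^6$ are functionally independent at a generic point of $(\pi_{4,3})^{-1}(\Orb_3^0)$; maximality and the generation property then follow for free.

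To do that, I would first produce explicit coordinate formulas. Working in the standard coordinates $(x^j,u^i,u^i_j,u^i_{j_1j_2},u^i_{j_1j_2j_3},u^i_{j_1\dots j_4})$ on $J^4\pi$, one writes down the linear system $\bigl(D_\sigma\psi^i_X\bigr)(\theta_4)=0$, $|\sigma|\le 3$, $i=1,\dots,4$, cutting out $\A_{\theta_4}\subset W_p/L_p^5$ (Proposition~\ref{IstrpSpcDscrb}). On $\Orb_3^0$ one has $F^3\ne 0$, hence $(F^1,F^2)\ne 0$ and $\g_{\theta_3}=\{0\}$ (Proposition~\ref{AlgIstr3}), so this system has a $2$--dimensional horizontal solution space; solving it yields an explicit basis of $\A_{\theta_4}$, rational in the jet coordinates with denominators powers of $F^1,F^2,F^3$. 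One then computes the bracket defining $\omega_{\theta_4}$ via \eqref{OmgH}, using the commutator identities for lifted fields from the proof of Proposition~\ref{BrktIstrpSpc}, applies $\rho_{4,0}$, evaluates on $\xi_{1_{\theta_3}},\xi_{2_{\theta_3}}$, and expands over the frame to read off $I^1,I^2$; subtracting the matching horizontal vector $j_p^4Z$ and applying $\rho_{4,1}$ gives the endomorphism $\Delta\in T_p\otimes T_p^*$, whose values on $\xi_{1_{\theta_3}},\xi_{2_{\theta_3}}$, expanded again over the frame, give $I^3,\dots,I^6$. The outcome is six explicit rational functions on $(\pi_{4,3})^{-1}(\Orb_3^0)$.

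Functional independence is then equivalent to the $6\times\dim J^4\pi$ Jacobian matrix $\bigl(\partial I^a/\partial(\text{jet coordinates})\bigr)$ having rank $6$ on a dense open subset; since matrix rank is lower semicontinuous and the entries are rational on $(\pi_{4,3})^{-1}(\Orb_3^0)$, it is enough to exhibit one point $\theta_4$ at which some $6\times 6$ minor is nonzero. Choosing a convenient jet — for instance one over $F^1=1$, $F^2=0$ with as many higher coordinates as possible set to zero — one evaluates and checks the minor; this is where computer algebra is genuinely needed, and it completes the proof.

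The principal obstacle is entirely computational. Solving the linear system of Proposition~\ref{IstrpSpcDscrb} over the ring of functions on $J^4\pi$ and then composing $\omega_{\theta_4}$, $\rho_{4,0}$, $\rho_{4,1}$ and the two frame expansions produces large rational expressions, so the real effort goes into carrying the symbolic computation through cleanly, tracking the denominators (invertible precisely because $F^3\ne 0$ on $\Orb_3^0$), and selecting the test point so that the $6\times 6$ minor is transparently nonzero. Conceptually, once the dimension count has reduced the theorem to functional independence, nothing further is at stake.
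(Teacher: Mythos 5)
Your proposal is correct and follows essentially the same route as the paper, which simply asserts that the theorem "can be proved by direct calculations with the help of computer algebra": you reduce maximality to functional independence via the count $N_4=6$ from the preceding proposition, and then verify independence by an explicit coordinate computation and a Jacobian rank check. Your write-up is in fact more detailed than the paper's (which gives no argument beyond the appeal to computer algebra), and the reduction you make explicit is exactly the one the paper leaves implicit.
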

It is clear that if $I$ is a $k$-order scalar differential invariant, then its Lie derivative $\xi_j(I)$ along the invariant vector field $\xi_j$, $j=1,2$, is a $k+1$-order scalar differential invariant. 
The following theorem can be proved also by direct calculations with the help of computer algebra.
\begin{theorem} The algebra $A_5$ is generated by the invariants $I^k$, $\xi_j(I^k)$, $j=1,2$, $k=1,2,\ldots,6$.
  In particular, the collection of 14 invariants: $I^k,\,\xi_1(I^k)$, $k=1,2,\ldots,6$, $\xi_2(I^5)$, and $\xi_2(I^6)$ 
  is  a maximal collection of functionally independent invariants of the algebra $A_5$. 
\end{theorem}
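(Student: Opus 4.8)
The plan is to reduce the statement to a finite rank computation, exactly as was done for $A_4$ in the preceding theorem. First I would observe that all the functions named in the theorem lie in $A_5$: the invariants $I^1,\dots,I^6$ belong to $A_4\subset A_5$ by the preceding theorem, and each Lie derivative $\xi_j(I^k)$, $j=1,2$, belongs to $A_5$ because the $\xi_j$ are invariant vector fields expressed through the total derivative operators \eqref{XiZeta1}, so differentiating an order-$4$ invariant along $\xi_j$ produces an order-$5$ invariant. Thus $\{\,I^k,\xi_j(I^k)\mid j=1,2,\ k=1,\dots,6\,\}$ is a family of $18$ elements of $A_5$. Next I would invoke the dimension table: at a generic point of $(\pi_{5,3})^{-1}(\Orb_3^0)$ one has $N_5=14$, that is, $A_5$ has transcendence degree $14$ over $\R$, so any $14$ functionally independent members of $A_5$ form a maximal functionally independent collection and hence generate $A_5$. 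Therefore the whole theorem follows once I exhibit $14$ functionally independent functions among the $18$, and the ``in particular'' clause is precisely the assertion that the specific $14$ functions $I^1,\dots,I^6$, $\xi_1(I^1),\dots,\xi_1(I^6)$, $\xi_2(I^5)$, $\xi_2(I^6)$ do the job.

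The core of the proof is then to verify this functional independence by an explicit computation in the standard coordinates on $J^5\pi$. Here $F^1,F^2$ are given by \eqref{F1F2}, $F^3$ by \eqref{F3}, $\Psi^1,\Psi^2$ by the formulas following Theorem \ref{ThrmOmg3H}, and $\xi_1,\xi_2$ by \eqref{XiZeta1}; the invariants $I^1,\dots,I^6$ are obtained, as in the section on generators, from the $2$--form $\omega_{\theta_4}=\omega_{\A_{\theta_4}}$ of \eqref{OmgH} (note $\g_{\theta_3}=\{0\}$ by Proposition \ref{AlgIstr3}, so that $\A_{\theta_4}$ is itself a horizontal subspace) and from the tensor $\Delta\in T_p\otimes T_p^*$, both of which one writes out by solving the linear systems of Proposition \ref{IstrpSpcDscrb} describing $\A_{\theta_4}$. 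Applying $\xi_1$ and $\xi_2$ once to each $I^k$ introduces the order-$5$ fiber coordinates. I would then form the $14\times\dim J^5\pi$ Jacobian matrix of these $14$ functions and check, with computer algebra, that its rank equals $14$ at a suitably chosen generic point (one with $F^3\neq 0$, so that it lies over $\Orb_3^0$, and at which no denominators or auxiliary determinants accidentally vanish). Since $14=N_5$ and functional independence at one point of the connected generic stratum $(\pi_{5,3})^{-1}(\Orb_3^0)$ propagates over it, the displayed $14$ invariants form a maximal functionally independent collection, they generate $A_5$, and a fortiori the full family $\{\,I^k,\xi_j(I^k)\,\}$ generates $A_5$.

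The main obstacle is the size of this last computation: unwinding the definitions of $\A_{\theta_4}$, the horizontal $2$--form $\omega_{\theta_4}$, and the tensor $\Delta$ into closed coordinate expressions for $I^1,\dots,I^6$, and then prolonging along $\xi_1,\xi_2$ to order $5$, leads to large rational functions whose denominators are powers of $F^3$, so the symbolic rank check must be organized carefully in order to terminate --- for instance by clearing those denominators and specializing to a well-chosen generic numerical point before computing the Jacobian. There is no conceptual difficulty beyond this bookkeeping: the argument is the same Lie--Tresse-type reduction already used for $A_4$, now pushed one jet order further.
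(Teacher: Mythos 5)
Your proposal is correct and follows essentially the same route as the paper, whose entire proof is the remark that the theorem ``can be proved by direct calculations with the help of computer algebra'': you make explicit the reduction to a Jacobian rank check via the count $N_5=14$ from formula \eqref{NmbrScInv} and then delegate the rank verification to symbolic computation, which is exactly what the author intends. No gap; your write-up simply spells out the bookkeeping the paper leaves implicit.
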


\section{The equivalence problem}
\subsection{}
Suppose $\E_1$ and $\E_2$ are equations of form \eqref{eq}, $a^i_1$ and $a^i_2$, $i=0,1,2,3$, are the coefficients of these equations respectively. Consider equations \eqref{CffTrnsfrm} describing transformation of coefficients of equations \eqref{eq} under point transformations. From these equations, we obtain the system of 2--order PDEs for a point transformation $f$
\begin{gather*}
 F^m(\,f^i,f^i_j,f^i_{jk}\,)=a^m_1-\Phi^m\bigl(\,a^0_2(f^1,f^2),\ldots,a^3_2(f^1,f^2),\, f^i_j,f^i_{jk}\,\bigr)=0\,,\\
 f^1_1f^2_2-f^2_1f^1_2\neq 0\qquad m=0,1,2,3\,.
\end{gather*}
We denote this system by $\Y(\E_1,\E_2)$. The equations $\E_1$ and $\E_2$ are locally equivalent iff $\Y(\E_1,\E_2)$ has a solution.

Let $\tau:\R^2\times\R^2\longrightarrow\R^2$ be a product bundle, $\tau_k:J^k\tau\longrightarrow\R^2$ the bundle of $k$--jets of sections of $\tau$, and $\tau_{k_1,k_2}:J^{k_1}\tau\longrightarrow J^{k_2}\tau$, $k_1 > k_2$, the natural projection sending a $k_1$--jet to its $k_2$--jet. We considered the system $\Y(\E_1,\E_2)$ as a submanifold of $J^2\tau$ and we consider a solution $f$ of $\Y(\E_1,\E_2)$ as a section $f$ of $\tau$ such that the image of the section $j_2f$ of $\tau_2$ belongs to $\Y(\E_1,\E_2)$.

Consider an arbitrary $k$--order PDE system $\Y\subset J^k\tau$. Let $y\in\Y$ and $y'=\tau_{k,k-1}(y)$. The tangent space to $\Y\cap\tau^{-1}_{k,k-1}(y')$ at the point $y$ is called the {\it symbol of $\Y$ at the point $y$} and is denoted by $\Smbl_{y}\Y$. It is easy to prove that for every point $y\in\Y(\E_1,\E_2)$, the symbol $\Smbl_y\Y(\E_1,\E_2)$ coincides with the vector space $g^2$ describing by \eqref{g2}.

The $r$-th prolongation, $r=1,2,\ldots$, of $\Y(\E_1,\E_2)$ is defined as the submanifold $\Y(\E_1,\E_2)^{(r)}\subset J^{2+r}\tau$ describing by the  system of equations
\begin{align*}
  &D_{\sigma}F^m=0,\quad m=0,1,2,3,\;0\le |\sigma|\le r,\\
  &f^1_1f^2_2-f^2_1f^1_2\neq 0\,,
\end{align*}
where $D_{\sigma}=D_{(j_1,j_2,\ldots,j_{|\sigma|})}=D_{j_1}\circ\cdots\circ D_{j_{|\sigma|}}$ and 
$D_j$ is the operator of total derivative w.r.t. $x^j$ in the bundle $J^{\infty}\tau$. 
Let $j_p^{2+r}f\in \Y(\E_1,\E_2)^{(r)}$. Then $f^{(r)}(j_p^rS_{\E_1})=j_{f(p)}^rS_{\E_2}$. Taking into account that the $r$--jet $f^{(r)}(j_p^rS_{\E_1})$ is defined by the $2+r$--jet $j_p^{2+r}f$, we will say that $2+r$--jet $j_p^{2+r}f$ transforms the $r$--jet $j_p^rS_{\E_1}$ to the $r$--jet $j_{f(p)}^rS_{\E_2}$. 

It is easy to prove that for every point $y\in\Y(\E_1,\E_2)^{(r)}$, the symbol $\Smbl_y\Y(\E_1,\E_2)^{(r)}$ is equal to $\{\,0\,\}$ if $r\geq 1$.

The following theorem holds, see \cite{Krnsh}.
\begin{theorem}\label{FntPDE} Let $\Y\subset J^k\tau$ be a PDE system. Assume that
  \begin{enumerate}
   \item $\Smbl_{y}\Y=\{\, 0\,\}$ for every $y\in\Y$,
   \item $\tau_{k+1,k}\bigl|_{\Y^{(1)}}:\Y^{(1)}\to\Y$ is surjective.
  \end{enumerate} 
  Then for every $y\in\Y$ there is a solution $f$ of $\Y$ such that $j^k_pf=y,\; p=\tau_k(y)$.
\end{theorem}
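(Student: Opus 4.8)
The plan is to pass to the first prolongation, recognise $\Y$ as the total space of a flat Ehresmann connection over the base $\R^2$, and then obtain a solution through a prescribed point $y$ as a flat section of that connection via the Frobenius theorem.

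First I would construct the connection. Write $\Y$ locally as the common zero locus $\{F^m=0\}$ of independent functions on $J^k\tau$, so that $\Y^{(1)}\subset J^{k+1}\tau$ is cut out by $F^m=0$ and $D_jF^m=0$. Since the symbol of a prolongation is the prolongation of the symbol, hypothesis (1) gives $\Smbl_z\Y^{(1)}=\{0\}$ for all $z\in\Y^{(1)}$; thus $\tau_{k+1,k}|_{\Y^{(1)}}$ is an immersion, and together with the surjectivity hypothesis (2) it is an equidimensional surjection, hence a surjective local diffeomorphism $\Y^{(1)}\to\Y$, which in particular admits smooth local sections $s\colon\Y\to\Y^{(1)}$. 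Using the standard identification of a $(k+1)$--jet $j^{k+1}_pf$ lying over $j^k_pf$ with the horizontal $2$--plane $T_{j^k_pf}\bigl(\im(j_kf)\bigr)\subset T_{j^k_pf}J^k\tau$, the section $s$ assigns to each $y\in\Y$ a horizontal $2$--plane $C_y\subset T_yJ^k\tau$. Because $D_jF^m$ is exactly the derivative of $F^m$ in the total--derivative directions prescribed by $s$, and these vanish on $\Y^{(1)}$, the plane $C_y$ annihilates every $dF^m$, i.e. $C_y\subset T_y\Y$; being horizontal it projects isomorphically onto $T_p\R^2$ ($p=\tau_k(y)$) and is a complement in $T_y\Y$ of the tangent direction of the $\tau_k|_\Y$--fibre. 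Thus $C$ is an Ehresmann connection on $\tau_k|_\Y\colon\Y\to\R^2$, and wherever $C$ is integrable its leaves have the form $\im(j_kg)$ for sections $g$ of $\tau$, which then automatically solve $\Y$.

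The heart of the argument is to show that $C$ is flat, $[C,C]\subset C$. Flatness of $C$ is equivalent to the section $s$ extending to a section $\Y\to\Y^{(2)}$ of $\tau_{k+2,k}$: comparing the vertical part of the bracket of the two horizontal coordinate fields of $C$ with the linear equations cut out by $\Y^{(2)}$ over $\Y^{(1)}$ identifies the curvature of $C$ at $y$ with the obstruction to such a lift, and by the same discreteness argument applied to $\Smbl\Y^{(2)}=\{0\}$ this obstruction vanishes over $s(y)$ as soon as $\tau_{k+2,k+1}\colon\Y^{(2)}\to\Y^{(1)}$ is surjective there. So it suffices to propagate surjectivity one step, from $\tau_{k+1,k}(\Y^{(1)})=\Y$ to $\tau_{k+2,k+1}(\Y^{(2)})=\Y^{(1)}$, and then, inductively, to all higher levels, whence $\Y$ is formally integrable. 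Since every symbol occurring here is zero (and so trivially $2$--acyclic), this is the elementary case of the prolongation theorem: one differentiates the equations $D_jF^m=0$ of $\Y^{(1)}$ once more, uses the symmetry $D_iD_j=D_jD_i$ and the surjectivity of $\tau_{k+1,k}|_{\Y^{(1)}}$ to solve the resulting linear system for the top--order jet coordinates. I expect this propagation step to be the main obstacle; everything else is bookkeeping, and it is precisely here that hypothesis (2) enters (the detailed prolongation argument is the one in \cite{Krnsh}).

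Finally, with $C$ a flat connection on $\Y\to\R^2$, the Frobenius theorem produces through any prescribed $y\in\Y$ an integral surface $N\subset\Y$ of $C$; as $C$ is horizontal, $\tau_k|_N$ is a local diffeomorphism near $y$, so $N=\im(j_kg)$ for a local section $g$ of $\tau$ with $j^k_pg=y$, $p=\tau_k(y)$. Because $N\subset\Y$ we get $j^k_qg\in\Y$ for all $q$ near $p$, that is, $g$ is a solution of $\Y$ with $j^k_pg=y$, which is the required statement.
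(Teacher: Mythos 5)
The paper does not actually prove this theorem: it is imported from Kuranishi's lectures (\cite{Krnsh}) with no argument, so there is no internal proof to compare yours against. Your flat--connection/Frobenius argument is the standard proof of solvability for finite--type systems and is correct in outline: hypothesis (2) together with the vanishing of the prolonged symbol gives the (unique, smooth) section $s:\Y\to\Y^{(1)}$, hence the horizontal distribution $C\subset T\Y$; flatness plus Frobenius then yields a holonomic integral surface, i.e.\ a solution, through any prescribed $y$. The only place where you lean on an unproved external input is precisely the step you flag as ``the main obstacle'': you derive flatness from surjectivity of $\Y^{(2)}\to\Y^{(1)}$ and must then propagate surjectivity by the Goldschmidt/Kuranishi prolongation argument. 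That detour is avoidable. Once $s$ exists, write the two horizontal fields as $V_j=D_j^k$ with the order--$(k+1)$ coordinates replaced by the components of $s$, and compute $[V_1,V_2]$ directly: its $\partial/\partial x^j$ components vanish, and its $\partial/\partial u^i_{\sigma}$ components for $|\sigma|\le k-1$ vanish identically because the components of $s$ are genuine (symmetric) jets --- this is where hypothesis (2) is consumed. Hence $[V_1,V_2]_y$ is tangent to $\Y$ and vertical over $J^{k-1}\tau$, so it lies in the tangent space to the fibre $\Y\cap\tau_{k,k-1}^{-1}(y')$, i.e.\ in $\Smbl_y\Y=\{0\}$ by hypothesis (1). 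The curvature therefore vanishes with no appeal to $\Y^{(2)}$ at all, and your proof closes using only the two stated hypotheses together with the regularity of $\Y$ and $\Y^{(1)}$ as submanifolds that the paper implicitly assumes throughout.
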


\subsection{}
Let $\E$ be an equation of form \eqref{eq} and $S_{\E}$ be the section of $\pi$ identified with this equation. We can consider the restrictions of a scalar differential invariant $I$ of order $k$ to the image of the section $j_kS_{\E}$ as a function of $x^1$ and $x^2$ in the domain of $S_{\E}$. This function is called a scalar differential invariant of order $k$ of the equation $\E$ and is denoted by $I_{\E}$. By $A^k_{\E}$ we denote the algebra of all scalar differential invariants of order $k$ of the equation $\E$. 

Let $p$ be a point of the domain of $S_{\E}$.  We say that $p$ is {\it regular} if there exists a neighborhood $U_p$ of $p$ such that the image of the restriction $j_3S_{\E}|_{U_p}$ belongs to $\Orb_3^0$. We will say that the neighborhood $U_p$ is {\it regular} too.
We will solve the equivalence problem in neighborhoods of regular points.

Let $p$ be a regular point of $\E$. Then it is possible three cases:
\begin{enumerate}
\item In some neighborhood of $p$, invariants $I^n_{\E}$, $n=1,2,\ldots,6$, are constants.
\item Among the invariants $I^1_{\E},\ldots,I^6_{\E}$, there is a nontrivial invariant generating $A^5_{\E}$ in some 
  neighborhood of $p$.
\item Among the invariants $I^n_{\E}$, $(\xi_j(I^n))_{\E}$, $n=1,2,\ldots,6$, $j=1,2$, there are two   
  functionally independent invariants in some neighborhood of $p$.
\end{enumerate}

In the first case, the equivalence problem is solved by
\begin{theorem} Suppose $\E_1$ and $\E_2$ are equations of form \eqref{eq}, $p_1$ and $p_2$ are their regular  
  points, and the invariants $I^k_{\E_1}$ and $I^k_{\E_2}$, $k=1,2,\ldots,6$, are constants in some neighborhoods of 
  $p_1$ and $p_2$ respectively. Then there exists a point transformation of neighborhoods of the points $p_1$ and $p_2$    transforming $\E_1$ to $\E_2$ and taking $p_1$ to $p_2$ iff 
  $$
    I^k_{\E_1}(p_1)=I^k_{\E_2}(p_2)\quad\forall\,k.
  $$
\end{theorem}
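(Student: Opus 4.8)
The plan is to reduce the equivalence problem to a question about the system $\Y(\E_1,\E_2)$ of $2$nd-order PDEs introduced earlier, and then apply Theorem~\ref{FntPDE}. Recall that $\E_1$ and $\E_2$ are locally equivalent near $p_1$ and $p_2$ (with $p_1\mapsto p_2$) if and only if $\Y(\E_1,\E_2)$ admits a solution $f$ with $f(p_1)=p_2$. Since $\Smbl_y\Y(\E_1,\E_2)=g^2$ and $\Smbl_y\Y(\E_1,\E_2)^{(r)}=\{0\}$ for $r\geq 1$, the first prolongation $\Y(\E_1,\E_2)^{(1)}\subset J^3\tau$ has trivial symbol, so Theorem~\ref{FntPDE} applies to $\Y(\E_1,\E_2)^{(1)}$ once we verify: (i) $\Y(\E_1,\E_2)^{(1)}$ is nonempty over the relevant point, and (ii) $\tau_{4,3}|_{(\Y(\E_1,\E_2)^{(1)})^{(1)}}$ is surjective onto $\Y(\E_1,\E_2)^{(1)}$. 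So the core of the argument is to produce, under the hypothesis $I^k_{\E_1}(p_1)=I^k_{\E_2}(p_2)$ for all $k$, a point $y_0\in\Y(\E_1,\E_2)^{(1)}$ lying over $p_1$ and to check the prolongation is formally integrable there.

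The first step is to exhibit a candidate $3$-jet of $f$. The idea is to use the invariant coframe-type data attached to $\Orb_3^0$: at $p_1$ the invariant vectors $\xi_{1_{\theta_3^{(1)}}},\xi_{2_{\theta_3^{(1)}}}$ (from \eqref{XiZeta}) form a basis of $T_{p_1}$, and similarly at $p_2$. I would define the linear map $T_{p_1}\to T_{p_2}$ sending $\xi_{i_{\theta_3^{(1)}}}\mapsto\xi_{i_{\theta_3^{(2)}}}$; this is the $1$-jet of the putative $f$. Because $\xi_1,\xi_2$ are differential invariants, this choice is the unique one compatible with $f^{(3)}(j^3_{p_1}S_{\E_1})=j^3_{p_2}S_{\E_2}$; conversely, since $J^3\pi$ restricted to $\Orb_3^0$ is a single orbit on which $\Gamma$ acts with trivial isotropy (Proposition~\ref{AlgIstr3}), there is exactly one $3$-jet $j^3_{p_1}f$ in $\G_{\theta_3^{(1)}}$-language carrying $j^3_{p_1}S_{\E_1}$ to $j^3_{p_2}S_{\E_2}$, and that $3$-jet is precisely a point $y_0\in\Y(\E_1,\E_2)^{(1)}$ over $p_1$. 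So step~(i) amounts to: the orbit map being a bijection on $\Orb_3^0$-fibers forces $\Y(\E_1,\E_2)^{(1)}$ to be nonempty and in fact to be a section-like subset projecting bijectively to the base near $p_1$.

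The second step is the surjectivity of $\tau_{4,3}$ on the next prolongation, i.e.\ that the $3$-jet $y_0$ extends to a $4$-jet solving $D_\sigma F^m=0$ for $|\sigma|\le 2$. Here is where the full hypothesis ``$I^k_{\E_1}(p_1)=I^k_{\E_2}(p_2)$ for all $k$'' is used: the obstruction to prolonging $y_0$ one step further is measured exactly by the values of the order-$4$ scalar invariants $I^1,\dots,I^6$ (and, iterating, their $\xi_j$-derivatives), because $A_4$ is generated by $I^1,\dots,I^6$ and $A_k$ for $k>4$ is generated by these together with iterated Lie derivatives along $\xi_1,\xi_2$. Concretely, the $4$-jet $f^{(4)}(j^4_{p_1}S_{\E_1})$ is determined by $j^5_{p_1}f$, and the condition that it equal $j^4_{p_2}S_{\E_2}$ translates, after using that $y_0$ already matches all order-$\le 3$ data, into the $6$ scalar equations $I^n(j^4_{p_1}S_{\E_1})=I^n(j^4_{p_2}S_{\E_2})$; matching these is exactly the assumption. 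Since the symbol of $\Y(\E_1,\E_2)^{(1)}$ is zero, once this single prolongation step is unobstructed the whole tower is, so $\tau_{4,3}|_{(\Y^{(1)})^{(1)}}\to\Y^{(1)}$ is surjective and Theorem~\ref{FntPDE} delivers an honest solution $f$ with $j^3_{p_1}f=y_0$, hence $f(p_1)=p_2$ and $f$ transforms $\E_1$ to $\E_2$ locally. The converse implication is immediate: if such an $f$ exists then $I^k_{\E_1}=f^*(I^k_{\E_2})$ as functions, so their values at $p_1$ and $p_2$ agree for every $k$.

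The main obstacle I anticipate is step~(ii): verifying that matching the finitely many order-$4$ invariants (when they are constant, in the case of this theorem the hypothesis is simply that \emph{all} $I^k_{\E_j}$ are constants with equal values) is \emph{sufficient} to kill the prolongation obstruction. This is really the assertion that the differential invariants $I^1,\dots,I^6$ together with the invariant derivations $\xi_1,\xi_2$ generate the full algebra of scalar invariants of all orders on $(\pi_{\infty,3})^{-1}(\Orb_3^0)$ — which is exactly the content of the two generation theorems for $A_4$ and $A_5$ proved above by computer algebra, combined with the general principle that an algebra of invariants stabilizing under two independent invariant derivations is generated by the lower-order generators and their derivatives. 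In the constant case the argument simplifies further: constancy of $I^1_{\E}=\dots=I^6_{\E}$ forces all higher invariants $(\xi_{j_1}\cdots\xi_{j_r}I^n)_{\E}$ to vanish, so the $\infty$-jet of $j_\infty S_{\E}$ at any regular point is completely determined by the six constants, and two equations with the same constants have the same $\infty$-jet up to the $\Gamma$-action — whence, by formal integrability with trivial symbol, an actual local equivalence.
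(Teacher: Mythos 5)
Your overall strategy (reduce to the PDE system $\Y(\E_1,\E_2)$, exploit the triviality of the isotropy on $\Orb_3^0$ together with the fact that $I^1,\dots,I^6$ generate $A_4$, then invoke Theorem \ref{FntPDE}) is the right one, but you apply the finiteness theorem at the wrong prolongation level, and this is a genuine gap. You propose to verify the hypotheses of Theorem \ref{FntPDE} for $\Y^{(1)}\subset J^3\tau$, i.e.\ to show that $\tau_{4,3}:\Y^{(2)}\to\Y^{(1)}$ is surjective. It is not, and no matching of invariants can make it so. A point of $\Y^{(1)}$ is a $3$-jet $j_p^3f$ carrying $j_p^1S_{\E_1}$ to $j^1_{f(p)}S_{\E_2}$; since $J^1\pi$ is a single orbit with $\dim\g_{\theta_1}=6$, the $3$-jets in $\Y^{(1)}$ with given source and target form a $6$-dimensional torsor over the isotropy group $G_{\theta_1}$. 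A point of $\Y^{(2)}$ must in addition carry $j_p^2S_{\E_1}$ to $j^2S_{\E_2}$, and the corresponding set is only a $4$-dimensional torsor over $G_{\theta_2}$ (on $\Orb_2^0$ the graded isotropy drops from $L_p^0/L_p^1\oplus g^2$ to $g^1_{\theta_2}\oplus g^2$). So the image of $\Y^{(2)}$ in $\Y^{(1)}$ has codimension $2$ in each such fibre: a generic $3$-jet matching the $1$-jets of the sections does not extend to a $4$-jet matching their $2$-jets, so condition (2) of Theorem \ref{FntPDE} fails for $\Y^{(1)}$ outright. Your claim that $\Y^{(1)}$ is ``a section-like subset projecting bijectively to the base'' conflates $\Y^{(1)}$ with $\Y^{(3)}$: the jet of $f$ that carries $j^3_pS_{\E_1}$ to $j^3S_{\E_2}$ is a $(2+3)$-jet of $f$, i.e.\ a point of $\Y^{(3)}\subset J^5\tau$, not of $\Y^{(1)}\subset J^3\tau$.

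The paper's proof applies Theorem \ref{FntPDE} to $\Y^{(3)}$, and that is exactly where your two key ideas become correct. Because $\g_{\theta_3}=\{0\}$ on $\Orb_3^0$, the $5$-jet of a point transformation carrying $j_p^3S_{\E_1}$ to $j^3S_{\E_2}$ is unique, so $\Y^{(3)}$ really is section-like; and the surjectivity of $\tau_{6,5}:\Y^{(4)}\to\Y^{(3)}$ reduces, by that uniqueness, precisely to the statement that $j_p^4S_{\E_1}$ and $j^4S_{\E_2}$ lie in one $\Gamma$-orbit of $J^4\pi$ --- which follows from $I^k_{\E_1}\equiv I^k_{\E_2}$, $k=1,\dots,6$ (constants with equal values, hence equal throughout the regular neighborhoods), since these six invariants generate $A_4$ and cut out the orbits over $\Orb_3^0$. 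Your closing remarks about the invariants controlling the prolongation obstruction are therefore essentially right in content, but they only become a proof once the base equation for the finiteness theorem is taken to be $\Y^{(3)}$ rather than $\Y^{(1)}$.
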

\begin{proof} The necessity is obvious. Prove the sufficiency. Consider regular neighborhoods $U_{p_1}$ of $p_1$ and $U_{p_2}$ of $p_2$ so that $I^k_{\E_1}|_{U_{p_1}}$ and $I^k_{\E_2}|_{U_{p_2}}$ are constants, $k=1,2,\ldots,6$. Restrict $\E_1$ to $U_{p_1}$ and $\E_2$ to $U_{p_2}$. By $\Y$ we denote the PDE $\Y(\E_1|_{U_{p_1}},\E_2|_{U_{p_2}})$. 

Consider the equation $\Y^{(3)}$. We have that $\Smbl_y\Y^{(3)}=\{0\}$ for every point $y\in\Y^{(3)}$. 

Check that the projection $\tau_{6,5}:\bigl(\Y^{(3)}\bigr)^{(1)}=\Y^{(4)}\longrightarrow\Y^{(3)}$ is a surjection. Suppose $j_p^5f\in\Y^{(3)}$. Then $j_p^5f$ takes $j_p^3S_{\E_1}$ to $j_{f(p)}^3S_{\E_2}$. Taking into account that the isotropy algebra $\g_{\theta_3}=\{0\}$ for every point $\theta_3\in\Orb_3^0$, we get that there exists a unique $5$-jet of point transformations taking $j_p^3S_{\E_1}$ to $j_{f(p)}^3S_{\E_2}$. It follows from the condition $I^k_{\E_1}|_{U_{p_1}}=I^k_{\E_2}|_{U_{p_2}}\;\forall\,k$ that the jets $j_p^4S_{\E_1}$ and $j_{f(p)}^4S_{\E_2}$ belong to the same orbit of $J^4\pi$. Hence there exists a $6$-jet $j_p^6f'\in\Y^{(4)}$ transforming $j_p^4S_{\E_1}$ to $j_{f'(p)}^4S_{\E_2}$. Obviously, $\tau_{6,5}(j_p^6f')=j_p^5f$. Thus the projection $\bigl(\Y^{(3)}\bigr)^{(1)}\longrightarrow\Y^{(3)}$ is a surjection.

The $3$-jets $j_{p_1}^3S_{\E_1}$ and $j_{p_2}^3S_{\E_2}$ belong to the orbit $\Orb_3^0$. Hence there exists a (unique) jet $j_{p_1}^5f$ of point transformations taking $j_{p_1}^3S_{\E_1}$ to $j_{p_2}^3S_{\E_2}$. Now it follows from Theorem \ref{FntPDE} that there exists a solution $f'$ of the equation $\Y$ such that $j_{p_1}^5f'=j_{p_1}^5f$.
\end{proof}

In the second case, the equivalence problem is solved by
\begin{theorem} Suppose $\E_1$ and $\E_2$ are equations of form \eqref{eq}, $p_1$ and $p_2$ are their regular points. Suppose $J_{\E_1}\in \{\,I^1_{\E_1},\ldots,I^6_{\E_1}\,\}$, $dJ_{\E_1}\bigr|_p\neq 0$, and $J_{\E_1}$ generates $A^5_{\E_1}$ in some neighborhood of $p_1$. Then there exists a point transformation of neighborhoods of $p_1$ and $p_2$ transforming $\E_1$ to $\E_2$ and taking $p_1$ to $p_2$ iff the following
conditions hold:
  \begin{enumerate}
   \item $dJ_{\E_2}\bigr|_{p_2}\neq 0$, $J_{\E_2}$ generates $A^5_{\E_2}$, and 
     $J_{\E_1}(p)=J_{\E_2}(p_2)$.
   \item If $I^k_{\E_1}=F^k(J_{\E_1})$ and $\bigl(\xi_j(I^k)\bigr)_{\E_1}=F^k_j(J_{\E_1})$ in some neighborhood of $p_1$,
     then $I^k_{\E_2}=F^k(J_{\E_2})$ and $\bigl(\xi_j(I^k)\bigr)_{\E_2}=F^k_j(J_{\E_2})$ in some  
     neighborhood of $p_2$, $k=1,2,\ldots,6$, $j=1,2$.
  \end{enumerate}
\end{theorem}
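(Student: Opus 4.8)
The plan is to follow the proof of the preceding theorem, the only difference being that the linking PDE system must be prolonged one order higher.

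Necessity is immediate. If $f$ is a point transformation with $f(p_1)=p_2$ carrying $\E_1$ to $\E_2$, then $f$ pulls every scalar differential invariant of $\E_2$ back to the corresponding invariant of $\E_1$ and pulls the invariant frame $\xi_1,\xi_2$ back to itself, so $J_{\E_1}=J_{\E_2}\circ f$, $I^k_{\E_1}=I^k_{\E_2}\circ f$, and $\bigl(\xi_j(I^k)\bigr)_{\E_1}=\bigl(\xi_j(I^k)\bigr)_{\E_2}\circ f$ near $p_1$; since $df|_{p_1}$ is invertible and $dJ_{\E_1}|_{p_1}\neq 0$ we get $dJ_{\E_2}|_{p_2}\neq 0$, and the functional relations in (1) and (2) transfer by substitution.

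For the sufficiency I would first shrink $U_{p_1}$ and $U_{p_2}$ so that on them $J_{\E_1}$ and $J_{\E_2}$ are submersions, the relations in (1)--(2) hold, and, using $J_{\E_1}(p_1)=J_{\E_2}(p_2)$, the images $J_{\E_1}(U_{p_1})$ and $J_{\E_2}(U_{p_2})$ coincide. Set $\Y=\Y(\E_1|_{U_{p_1}},\E_2|_{U_{p_2}})$ and work with $\Y^{(4)}\subset J^6\tau$. Since the $r$-th prolongation has zero symbol for every $r\geq 1$, condition (1) of Theorem \ref{FntPDE} holds for $\Y^{(4)}$, and it remains to find a point of $\Y^{(4)}$ over $p_1$ and to prove that $\tau_{7,6}\colon\Y^{(5)}\to\Y^{(4)}$ is surjective. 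For the first: $J_{\E_1}(p_1)=J_{\E_2}(p_2)$ together with $I^k_{\E_i}=F^k(J_{\E_i})$ --- the same functions $F^k$ --- gives $I^k_{\E_1}(p_1)=I^k_{\E_2}(p_2)$ for $k=1,\dots,6$; as $I^1,\dots,I^6$ generate $A_4$, the jets $j^4_{p_1}S_{\E_1}$ and $j^4_{p_2}S_{\E_2}$ lie in a single $\Gamma$-orbit of $J^4\pi$, so the $6$-jet of a point transformation sending the first to the second is a point $y_0\in\Y^{(4)}$ over $p_1$. For the surjectivity: a point $j^6_pf\in\Y^{(4)}$ satisfies $f^{(4)}(j^4_pS_{\E_1})=j^4_qS_{\E_2}$ with $q=f(p)\in U_{p_2}$, so invariance forces all invariants of order $\leq 4$ --- in particular $J$ --- to agree at these two $4$-jets, whence $J_{\E_1}(p)=J_{\E_2}(q)$; substituting into $\bigl(\xi_j(I^k)\bigr)_{\E_i}=F^k_j(J_{\E_i})$ makes the $14$ generators of $A_5$ agree at $j^5_pS_{\E_1}$ and $j^5_qS_{\E_2}$, so these $5$-jets lie in one $\Gamma$-orbit of $J^5\pi$; choosing a point transformation $h$ realizing this equivalence gives $j^7_ph\in\Y^{(5)}$, and since $\g_{\theta_4}=\{0\}$ over $\Orb_3^0$ the $6$-jet carrying $j^4_pS_{\E_1}$ to $j^4_qS_{\E_2}$ is unique, so $\tau_{7,6}(j^7_ph)=j^6_pf$. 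Theorem \ref{FntPDE} then produces a solution $f$ of $\Y$ with $j^6_{p_1}f=y_0$; the nondegeneracy built into $\Y$ makes $f$ a point transformation near $p_1$ with $f(p_1)=p_2$ that takes $\E_1$ to $\E_2$.

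The delicate step is the surjectivity of $\tau_{7,6}\colon\Y^{(5)}\to\Y^{(4)}$: one must be certain that agreement of the $14$ generating invariants of $A_5$ really forces the two $5$-jets into the same orbit --- i.e.\ that these invariants separate the $\Gamma$-orbits lying over $\Orb_3^0$ near the jets concerned --- and that the isotropy of a $4$-jet over $\Orb_3^0$ is trivial, so that the lift of $j^6_pf$ is uniquely determined. Everything else is prolongation bookkeeping together with the count of invariants already in hand.
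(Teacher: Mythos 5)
Your proposal follows essentially the same route as the paper: shrink to neighborhoods where the functional relations hold, work with $\Y^{(4)}\subset J^6\tau$, use the vanishing symbol, obtain a base point of $\Y^{(4)}$ over $p_1$ from the agreement of $I^1,\dots,I^6$ (hence of the $4$--jet orbits), and prove surjectivity of $\tau_{7,6}\colon\Y^{(5)}\to\Y^{(4)}$ by passing from agreement of the generators of $A_5$ to coincidence of $5$--jet orbits, with uniqueness of the lift coming from $\g_{\theta_4}=\{0\}$. The ``delicate step'' you flag --- that the generating invariants separate orbits over $\Orb_3^0$ --- is used in the paper at exactly the same point and with the same (unargued) reliance on the invariant count, so your write-up matches the paper's proof in both structure and level of detail.
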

\begin{proof} The necessity is obvious. Prove the sufficiency. It is clear that there exists a neighborhood $V$ of the point $J_{\E_1}(p_1)=J_{\E_2}(p_2)$ in $\R$ such that $J_{\E_1}$ generates $A^5_{\E_1}$ in $U_{p_1}=J_{\E_1}^{-1}(V)$ and conditions (1) and (2) are satisfied for $\E_2$ in $U_{p_2}=J_{\E_2}^{-1}(V)$. Let $\Y=\Y(\E_1|_{U_{p_1}},\E_2|_{U_{p_2}})$. Then $\Smbl_y\Y^{(4)}=\{0\}$ for every point $y\in\Y^{(4)}$.

Check that the projection $\tau_{7,6}:\bigl(\Y^{(4)}\bigr)^{(1)}=\Y^{(5)}\longrightarrow\Y^{(4)}$ is a surjection. Let $j_{p}^6f\in\Y^{(4)}$. This means that $j_{p}^6f$ transforms $j_{p}^4S_{\E_1}$ to $j_{f(p)}^4S_{\E_2}$.
Taking into account that the isotropy algebra $\g_{\theta_4}=\{0\}$ for every point $\theta_4\in(\pi_{4,3})^{-1}(\Orb_3^0)$, we get that $j_{p}^6f$ is a unique $6$-jet transforming $j_{p}^4S_{\E_1}$ to $j_{f(p)}^4S_{\E_2}$. The jets $j_{p}^4S_{\E_1}$ and $j_{f(p)}^4S_{\E_2}$ belongs to the same orbit. Hence $J_{\E_1}(j_{p}^4S_{\E_1})=J_{\E_2}(j_{f(p)}^4S_{\E_2})$. From condition (2) of the theorem, we get that $I_{\E_1}(j_{p}^5S_{\E_1})=I_{\E_2}(j_{f(p)}^5S_{\E_2})$ for every $I\in A_5$. This means that
the $5$-jets $j_{p}^5S_{\E_1}$ and $j_{f(p)}^5S_{\E_2}$ belong to the same orbit. Hence there exists a $7$-jet $j_{p}^7f'\in\Y^{(5)}$ transforming $j_{p}^5S_{\E_1}$ to $j_{f'(p)}^5S_{\E_2}$. Obviously $\tau_{7,6}(j_p^7f')=j_p^6f$.
Thus the projection $\bigl(\Y^{(4)}\bigr)^{(1)}\longrightarrow\Y^{(4)}$ is a surjection.

It follows from conditions of the theorem that the $4$-jets $j_{p_1}^4S_{\E_1}$ and $j_{p_2}^4S_{\E_2}$ belong to the same orbit. Hence there exists a (unique) jet $j_{p_1}^6f$ transforming $j_{p_1}^4S_{\E_1}$ to $j_{p_2}^4S_{\E_2}$. Now it follows from Theorem \ref{FntPDE} that there exists a solution $f'$ of the equation $\Y$ such that $j_p^6f'=j_p^6f$.
\end{proof}

In the last case, the equivalence problem is solved by the following theorem, which is proved in the same way as the previous one. 
\begin{theorem} Suppose $\E_1$ and $\E_2$ are equations of form \eqref{eq}, $p_1$ and $p_2$ are their regular points. 
  Suppose invariants $J^1_{\E_1}$, $J^2_{\E_1}$ of the collection of the invariants  
  $\{\,I^k_{\E_1},\,\bigl(\xi_j(I^k)\bigr)_{\E_1}\,\}_{k=1,2,\ldots,6,\,j=1,2}$ are functionally independent in some 
  neighborhood of $p_1$. Then there exists a point transformation of neighborhoods of $p_1$ and $p_2$ 
  transforming $\E_1$ to $\E_2$ and taking $p_1$ to $p_2$ iff the following conditions hold:
  \begin{enumerate}
  \item The invariants $J^1_{\E_2},\,J^2_{\E_2}$ are functionally independent in some neighborhood of
    $p_2$, $J^1_{\E_1}(p_1)=J^1_{\E_2}(p_2)$, and $J^2_{\E_1}(p_1)=J^2_{\E_2}(p_2)$.
  \item If $I^k_{\E_1}=F^k(J^1_{\E_1}, J^2_{\E_1})$ and $\bigl(\xi_j(I^k)\bigr)_{\E_1}=F^k_j(J^1_{\E_1}, J^2_{\E_1})$ in 
    some neighborhood of $p_1$, then $I^k_{\E_2}=F^k(J^1_{\E_2}, J^2_{\E_2})$ and
    $\bigl(\xi_j(I^k)\bigr)_{\E_2}=F^k_j(J^1_{\E_2}, J^2_{\E_2})$ in some
    neighborhood of $p_2$, $k=1,2,\ldots,6$, $j=1,2$.
  \end{enumerate}
\end{theorem}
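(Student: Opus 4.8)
The plan is to repeat, essentially verbatim, the scheme of the two preceding theorems: reduce the existence of the desired point transformation to the formal solvability of the overdetermined system $\Y(\E_1,\E_2)$ and then invoke Theorem \ref{FntPDE}. Necessity is immediate: a point transformation $f$ with $f(p_1)=p_2$ takes $S_{\E_1}$ to $S_{\E_2}$, hence carries the $\Gamma$-orbit of $J^k\pi$ through $j^k_{p_1}S_{\E_1}$ onto the orbit through $j^k_{p_2}S_{\E_2}$, so it transforms every scalar differential invariant into itself; in particular it forces the equalities $J^i_{\E_1}(p_1)=J^i_{\E_2}(p_2)$ of condition (1) and the coincidence of the functional relations in condition (2).

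For the sufficiency I would first pass to regular neighborhoods $U_{p_1}\ni p_1$ and $U_{p_2}\ni p_2$ on which $J^1_{\E_1},J^2_{\E_1}$ are functionally independent, on which the maps $p\mapsto(J^1_{\E_1}(p),J^2_{\E_1}(p))$ and its analogue for $\E_2$ have one and the same image in $\R^2$, with $p_1$ and $p_2$ mapped to the same point, and on which conditions (1)--(2) hold. Put $\Y=\Y(\E_1|_{U_{p_1}},\E_2|_{U_{p_2}})$. As was already observed, $\Smbl_y\Y^{(r)}=\{0\}$ for every $y$ and every $r\ge 1$, so the first hypothesis of Theorem \ref{FntPDE} is automatic for any prolongation.

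The crux is surjectivity of one prolongation projection $\tau_{k+1,k}\colon\Y^{(r+1)}\to\Y^{(r)}$, where $r$ is chosen no smaller than the orders of $J^1,J^2$ (so $r=4$ if both lie among $I^1,\ldots,I^6$, and $r=5$ if one of them is some $\xi_j(I^k)$). Given $y=j^{2+r}_pf\in\Y^{(r)}$, this jet transforms $j^r_pS_{\E_1}$ into $j^r_{f(p)}S_{\E_2}$; since $\g_{\theta_r}=\{0\}$ for every $\theta_r\in(\pi_{r,3})^{-1}(\Orb_3^0)$, it is the unique jet with this property, and $j^r_pS_{\E_1}$, $j^r_{f(p)}S_{\E_2}$ lie in one orbit of $J^r\pi$. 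Hence $J^1,J^2$ take equal values at $p$ and at $f(p)$, and by condition (2) --- together with the fact that $\{I^k,\xi_j(I^k)\}$ is a complete system of order-$5$ invariants and that each $A_m$, $m>5$, is generated by iterated $\xi_j$-derivatives of it --- all scalar differential invariants of order $r+1$ agree at $p$ and at $f(p)$. So $j^{r+1}_pS_{\E_1}$ and $j^{r+1}_{f(p)}S_{\E_2}$ lie in one orbit of $J^{r+1}\pi$, there is a jet $j^{3+r}_pf'\in\Y^{(r+1)}$ carrying one to the other, and $\tau_{k+1,k}(j^{3+r}_pf')=y$. Finally, conditions (1)--(2) put $j^r_{p_1}S_{\E_1}$ and $j^r_{p_2}S_{\E_2}$ in one orbit, so there is a (unique) point $y_0\in\Y^{(r)}$ over the pair $(p_1,p_2)$; Theorem \ref{FntPDE} applied to $\Y^{(r)}$ produces a solution $f$ of $\Y^{(r)}$, hence of $\Y$, with $j^{2+r}_{p_1}f=y_0$, and this $f$ is the required point transformation.

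The main obstacle is the middle step, namely the passage from equality of orbits at order $r$ to equality of orbits at order $r+1$. One must check that condition (2), stated only for the generators $I^k$ and their first $\xi_j$-derivatives, genuinely propagates to every invariant of every order: this relies on the universal recursions expressing the higher $\xi_j$-derivatives of the $I^k$ through lower-order invariants, and one must verify that these recursions together with (2) make the germs of the functional relations for $\E_1$ and for $\E_2$ literally coincide. Once this is secured, the remaining steps are identical to those of the previous theorem.
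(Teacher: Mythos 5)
Your proposal is correct and coincides with the paper's treatment: the paper gives no separate argument for this theorem, stating only that it is proved in the same way as the preceding one, and your write-up is exactly that adaptation (necessity via invariance, sufficiency via $\Smbl_y\Y^{(r)}=\{0\}$, surjectivity of one prolongation projection using triviality of the isotropy algebras and the orbit/invariant correspondence, then Theorem \ref{FntPDE}). You also correctly isolate the one point needing extra care beyond what the paper records --- when $J^1$ or $J^2$ has order $5$ one must know that $A_6$ is generated by the $I^k$ and their iterated $\xi_j$-derivatives, a fact the paper states only for $A_5$.
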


\section{Appendix}
\subsection{The proof of Proposition \ref{LieAlgIsm}}\label{SctnLieAlgIsm}
Suppose $X$ and $Y$ are vector fields on the base of $\pi$, $f_t$ and $g_s$ are their flows respectively. Then
  \begin{multline*}
    [\,X^{(k)},\,Y^{(k)}\,]=\lim_{t\to 0}\frac{1}{t}
    \Bigl(\,Y^{(k)}-(f^{(k)}_t)_*(Y^{(k)}\circ
    f^{(k)}_{-t})\,\Bigr)\\
    =\lim_{t\to 0}\frac{1}{t}
    \Bigl(\,\frac{d}{ds}\Bigl|_{s=0}g^{(k)}_s
    -(f^{(k)}_t)_*\bigl(\frac{d}{ds}\Bigl|_{s=0}g^{(k)}_s
    \circ f^{(k)}_{-t}\bigr)\,\Bigr)
    =\lim_{t\to 0}\frac{1}{t}
    \Bigl(\,\frac{d}{ds}\Bigl|_{s=0}g^{(k)}_s\\
    -\frac{d}{ds}\Bigl|_{s=0}f^{(k)}_t\circ\,g^{(k)}_s
    \circ f^{(k)}_{-t}\,\Bigr)
    =\lim_{t\to 0}\frac{1}{t}\frac{d}{ds}\Bigl|_{s=0}
    \Bigl(\,g^{(k)}_s\circ f^{(k)}_t\circ\,g^{(k)}_s
    \circ f^{(k)}_{-t}\,\Bigr)\\
    =\lim_{t\to 0}\frac{1}{t}\frac{d}{ds}\Bigl|_{s=0}
    \Bigl(\,g_s\circ f_t\circ\,g_s
    \circ f_{-t}\,\Bigr)^{(k)}
    =\lim_{t\to 0}\frac{1}{t}
    \Bigl(\,\frac{d}{ds}\Bigl|_{s=0}g_s\\
    -\frac{d}{ds}\Bigl|_{s=0}f_t\circ\,g_s
    \circ f_{-t}\,\Bigr)^{(k)}
    =\lim_{t\to 0}\frac{1}{t}
    \bigl(\,Y-(f_t)_*(Y\circ f_{-t})\,\bigr)^{(k)}
    =[\,X,\,Y\,]^{(k)}\,.
  \end{multline*}
  The $\R$ -- linearity of the map $ X\mapsto X^{(k)}$ is obvious. This completes the proof.

\subsection{The proof of Theorem \ref{ThOmgIndH}}\label{PrfThOmgIndH}
Let $0_1$ be the point of $J^1\pi$ such that its standard coordinates are zeros and let $\theta_2$ be an arbitrary point of the fiber $(\pi_{2,1})^{-1}(0_1)$. Taking into account that $J^1\pi$ is an orbit of the action of $\Gamma$, we obtain that it is enough to prove this theorem for the point $\theta_2$.

From propositions \ref{IstrpAlgkPr}, we get that the algebra $\g_{0_1}$ is defined by the system
\begin{align*}
  X^2_{11}&=0, & X^1_{11} - 2X^2_{12}&=0, & 2X^1_{12} - X^2_{22}&=0, & X^1_{22}&=0,\\
  X^2_{111}&=0,& X^1_{111} - 2X^2_{112}&=0,& 2X^1_{112} - X^2_{122}&=0,& X^1_{122}&=0,\\
  X^2_{112}&=0,& X^1_{112} - 2X^2_{122}&=0,& 2X^1_{122} - X^2_{222}&=0,& X^1_{222}&=0.
\end{align*}
Whence,  
$$
  \g_{0_1}=\bigl\{\;j^3_0X=(\,0,\;X^i_j,\;X^i_{j_1j_2},\;0\,)\;
  \bigr\}\,,
$$
where the components $X^i_j$ are arbitrary and the components $X^i_{j_1j_2}$ satisfy to \eqref{g2}. From propositions \ref{IstrpSpcDscrb}, we get that the space $\A_{\theta_2}$ is defined by the system
\begin{gather*}
X^2_{11}=0,\quad   X^1_{11}-2X^2_{12}=0,\quad   2X^1_{12} - X^2_{22}=0,\quad   X^1_{22}=0,\\
X^2_{111}=u^1_{1i}X^i,\quad -X^1_{111}+2X^2_{112}=u^2_{1i}X^i,\quad  -2X^1_{112}+X^2_{122}=u^3_{1i}X^i,\\ X^1_{122}=-u^4_{1i}X^i,\\
X^2_{112}=u^1_{i2}X^i,\quad -X^1_{112}+2X^2_{122}=u^2_{i2}X^i,\quad -2X^1_{122}+X^2_{222}=u^3_{i2}X^i,\\ X^1_{222}=-u^4_{i2}X^i.
\end{gather*}
Whence, 
$$
  \A_{\theta_2}=\bigl\{\;j^3_0X=(\,X^i,\;X^i_j,\;X^i_{j_1j_2},\;
  X^i_{j_1j_2j_3}\,)\;\bigr\}\,,
$$
where the components $X^i_j$ are arbitrary and the components $X^i_{j_1j_2}$ satisfy to \eqref{g2}. It follows that there exists a horizontal subspace $H\subset\A_{\theta_2}$ such that
$$
  H=\bigl\{\;j^3_0X=(\,X^i,\;0,\;0,\; h^i_{j_1j_2j_3,r}X^r\,)\;\bigr\}\,.
$$
Obviously, this subspace satisfies condition \eqref{HS1}. Hence 
$$
  \omega_{H}(X_p, Y_p)=\bigl(\,(h^i_{j_1j_2r,s}-h^i_{j_1j_2s,r})X^rY^s\,\bigr)\,,
$$  
where $X_p=(X^1,X^2)$ and $Y_p=(Y^1,Y^2)$.
An arbitrary horizontal subspace $\tilde H\subset\A_{\theta_2}$ has the form 
$$
  \tilde H=\bigl\{\;j^3_0X=(\,X^i,\;h^i_{j,r}X^r,\;h^i_{j_1j_2,r}X^r,\;h^i_{j_1j_2j_3,r}X^r\,)\;\bigr\}\,,
$$
where the components $h^i_{j_1j_2,r}X^r$ satisfy to \eqref{g2}. Let $\tilde H$ satisfies \eqref{HS1}. Then
\begin{multline*}
  \omega_{\tilde H}(X_p, Y_p)=\bigl(\,(\,h^k_{j_1j_2,r}h^i_{k,s}-h^k_{j_1j_2,s}h^i_{k,r}+h^k_{j_1,r}h^i_{kj_2,s}
  -h^k_{j_1,s}h^i_{kj_2,r}\\
  +h^k_{j_2,r}h^i_{kj_1,s}-h^k_{j_2,s}h^i_{kj_1,r}+h^i_{j_1j_2r,s}-h^i_{j_1j_2s,r}\,)X^rY^s\,\bigr)\,.
\end{multline*}
Consequently, 
\begin{multline*}
  (\,\omega_{\tilde H}-\omega_{H}\,)(X_p,Y_p)\\
  =(\,h^k_{j_1j_2,r}h^i_{k,s}-h^k_{j_1j_2,s}h^i_{k,r}
  +h^k_{j_1,r}h^i_{kj_2,s}
  -h^k_{j_1,s}h^i_{kj_2,r}\\
  +h^k_{j_2,r}h^i_{kj_1,s}-h^k_{j_2,s}h^i_{kj_1,r}\,)X^rY^s\,.
\end{multline*}
Prove that $\omega_{\tilde H}-\omega_{H}=0$. Condition \eqref{HS1} for $\tilde H$ means that
$$
  \begin{aligned}
    h^i_{s,r}&=h^i_{r,s}\;\;\forall\;i,r,s\quad\text{and}\\
    h^1_{11,2}-h^1_{12,1}&=h^2_{1,2}h^1_{1,2}-h^2_{1,1}h^1_{2,2}\,,\\
    h^1_{12,2}-h^1_{22,1}&=h^1_{2,2}h^1_{1,1}+h^2_{2,2}h^1_{1,2}
                         -h^1_{1,2}h^1_{1,2}-h^2_{1,2}h^1_{2,2}\,,\\
    h^2_{11,2}-h^2_{12,1}&=h^1_{1,2}h^2_{1,1}+h^2_{1,2}h^2_{1,2}
                         -h^1_{1,1}h^2_{1,2}-h^2_{1,1}h^2_{2,2}\,,\\
    h^2_{12,2}-h^2_{22,1}&=h^1_{2,2}h^2_{1,1}-h^1_{1,2}h^2_{1,2}\,.
  \end{aligned}
$$
Taking into account that the components $h^i_{j_1j_2,r}X^r$ satisfy
\eqref{g2}, we can rewrite the last system in the following way
$$
  \begin{aligned}
    2h^2_{12,2}-h^1_{12,1}&=h^2_{1,2}h^1_{1,2}-h^2_{1,1}h^1_{2,2}\,,\\
    h^1_{12,2}&=h^1_{2,2}h^1_{1,1}+h^2_{2,2}h^1_{1,2}
                         -h^1_{1,2}h^1_{1,2}-h^2_{1,2}h^1_{2,2}\,,\\
    -h^2_{12,1}&=h^1_{1,2}h^2_{1,1}+h^2_{1,2}h^2_{1,2}
                         -h^1_{1,1}h^2_{1,2}-h^2_{1,1}h^2_{2,2}\,,\\
    h^2_{12,2}-2h^1_{12,1}&=h^1_{2,2}h^2_{1,1}-h^1_{1,2}h^2_{1,2}\,.
  \end{aligned}
$$
From this system, the components $h^i_{j_1j_2,k}$ are expressed in the terms of the components $h^i_{j,k}$ in the following way: 
\begin{equation}\label{SH1spc}
  \begin{aligned}
    h^2_{12,2}&=h^2_{1,2}h^1_{1,2}-h^2_{1,1}h^1_{2,2}\,,\\
    h^1_{12,2}&=h^1_{2,2}h^1_{1,1}+h^2_{2,2}h^1_{1,2}
                         -h^1_{1,2}h^1_{1,2}-h^2_{1,2}h^1_{2,2}\,,\\
    h^2_{12,1}&=-h^1_{1,2}h^2_{1,1}-h^2_{1,2}h^2_{1,2}
                         +h^1_{1,1}h^2_{1,2}+h^2_{1,1}h^2_{2,2}\,,\\
    h^1_{12,1}&=h^2_{1,2}h^1_{1,2}-h^2_{1,1}h^1_{2,2}\,.
  \end{aligned}
\end{equation}
The values of the 2--form $\omega_{\tilde H}-\omega_{H}$ belong to $g^2$ and $g^2$ is defined by \eqref{g2}. Therefore to prove that $\omega_{\tilde H}-\omega_{H}$ is zero, it is enough to check that the components $(\omega_{\tilde H}-\omega_{H})^1_{12,12}$ and $(\omega_{\tilde H}-\omega_{H})^2_{12,12}$ of $\omega_{\tilde H}-\omega_{H}$ are zeros. It can be easily checked by direct calculations applying \eqref{SH1spc}, the equalities $h^i_{s,r}=h^i_{r,s}$, and taking into account that the components $h^i_{j_1j_2,r}X^r$ satisfy \eqref{g2}. This concludes the proof.

\subsection{The expression of ${\bf \omega^2}$ in the standard coordinates}\label{SctnOmg2Expr} 

Let $\theta_2$ be an arbitrary point of $J^2\pi$, $\theta_1=\pi_{2,1}(\theta_2)$, and $p=\pi_2(\theta_2)$. Then it follows from $g^1_{\theta_1}=L_p^0/L_p^1$ and $L_p^1/L_p^2\subset L_p^0/L_p^1\otimes T_p^*$ that there are horizontal subspaces of $H\subset\A_{\theta_2}$ so that 
\begin{equation}\label{HS2}
  H=\bigl\{\;j^3_pX=(\,X^i,\;0,\;h^i_{j_1j_2,r}X^r,\; h^i_{j_1j_2j_3,r}X^r\,)\,\bigr\}\,.
\end{equation}
Suppose $H$ is one of these horizontal subspaces and $j_p^3X, j_p^3Y\in H$. Then 
$$
 [\,j_p^3X, j_p^3Y\,]=\bigl(\,0\,,\;(h^i_{j_1r,s}-h^i_{j_1s,r})X^rY^s\,,\;
 (h^i_{j_1j_2r,s}-h^i_{j_1j_2s,r})X^rY^s\\,\bigr)\,. 
$$
It follows that $H$ satisfies \eqref{HS1} iff 
$$
  h^i_{jr,s}=h^i_{jr,s}\quad\forall\,i,j,r,s\,.
$$
Vectors $j^3_pX\in H$ satisfy the system, 
\begin{gather}
    \psi^i_{X}(\theta_1)=0\,,\quad D_1(\,\psi^i_{X}\,)(\theta_2)=0\,,\quad            
    D_2(\,\psi^i_{X}\,)(\theta_2)=0\,,\label{IstrpSpc2}\\
     i=1,2,3,4\notag
\end{gather}
defining $\A_{\theta_2}$, see Proposition \ref{IstrpSpcDscrb}. The eight last equations of this system express the components $h^i_{j_1j_2j_3,r}$ in the terms of $h^i_{j_1j_2,r}$. The four first equations of this system 
\begin{align*}
    -u^0_1X^1-u^0_2X^2 + h^2_{11,r}X^r&=0\,,\\
    -u^1_1X^1-u^1_2X^2 - h^1_{11,r}X^r + 2h^2_{12,r}X^r&=0\,,\\
    -u^2_1X^1-u^2_2X^2 - 2h^1_{12,r}X^r + h^2_{22,r}X^r&=0\,,\\
    -u^3_1X^1-u^3_2X^2 - h^1_{22,r}X^r&=0
\end{align*}
connect the components $h^i_{j_1j_2,r}$. It is easy to see that this system has a unique solution $h^i_{j_1j_2,r}$ satisfying the condition $h^i_{jr,s}=h^i_{jr,s}$ for all $i,j,r,s$. This solution is described by the formulas
$$
 \begin{aligned}
    h^1_{11,1}=2u^0_2-u^1_1\,,\;h^1_{11,2}=\frac{1}{3}(\,u^1_2-2u^2_1\,)\,,\;h^1_{12,2}=-u^3_1\,,\;h^1_{22,2}=-u^3_2\,,\\
    h^2_{11,1}=u^0_1\,,\;h^2_{11,2}=u^0_2\,,\;h^2_{12,2}=\frac{1}{3}(\,2u^1_2-u^0_1\,)\,,\;h^2_{22,2}=-2u^3_1+u^2_2\,.   
 \end{aligned}
$$
Thus in $\A_{\theta_2}$, there exists a unique horizontal subspace $H$ satisfying \eqref{HS2} and \eqref{HS1}. 
From above-mentioned formula for brackets of vectors of $H$, we get 
$$
  \omega_{\theta_2}=(h^i_{j_1j_2r,s}-h^i_{j_1j_2s,r})\bigl(\frac{\partial}{\partial x^i}
  \otimes (dx^{j_1}\odot dx^{j_2})\bigr)\otimes\,(dx^r\wedge\,dx^s)\,.
$$
Taking into account \eqref{g2} and \eqref{Gnrtrs_g2}, we get
$$
  \omega_{\theta_2}=\bigl((h^1_{111,2}-h^1_{112,1})\cdot e_1+(h^2_{221,2}-h^2_{222,1})\cdot e_2\bigr)
  \otimes\,(dx^1\wedge\,dx^2)\,.
$$
From the eight last equations of system \eqref{IstrpSpc2}, we get that 
$$
  h^1_{111,2}-h^1_{112,1}=F^1\,,\quad h^2_{221,2}-h^2_{222,1}=F^2\,,
$$
where $F^1$ and $F^2$ are defined by \eqref{F1F2}. Thus we obtain the following expression of $\omega^2$ in the
standard coordinates
\begin{multline*}
  \omega^2=\Bigl(F^1\bigl(2\frac{\partial}{\partial x^1}
  \otimes (dx^1\odot dx^1)
  +\frac{\partial}{\partial x^2}\otimes (dx^1\odot dx^2)\bigr)\\
  +F^2\bigl(2\frac{\partial}{\partial x^2}\otimes (dx^2\odot dx^2)
  +\frac{\partial}{\partial x^1}\otimes
  (dx^1\odot dx^2)\bigr)\Bigr)\\
  \otimes\,(dx^1\wedge\,dx^2)\,.
\end{multline*}

\subsection{The proof of Lemma \ref{Lmm}}\label{PrfLmm}
Let us calculate components of the tensor $t_H$ in the standard base of $T_p$ to check the required symmetry.

In the standard coordinates, the horizontal subspace $H$ is defined by the quantities $h^i_{j,k}$, $h^i_{j_1j_2,k}$, $\ldots$, $h^i_{j_1\ldots j_4,k}$, $i,j_1,\ldots,j_4,k=1,2$
$$
  H=\bigl\{\,j^4_pX=\bigl(\,X^i,\; X^k(\,h^i_{j,k},\,h^i_{j_1j_2,k},\,h^i_{j_1j_2j_3,k},\,
  h^i_{j_1j_2j_3j_4,k}\,)\,\bigr).
$$
Condition \eqref{HS03} means
$$
  h^i_{j,k}=h^i_{k,j}\quad\forall\,i,j,k.
$$
Let  
\begin{equation}\label{BrHr3}
  [j_p^4X, j_p^4Y]=(\,0,\;0,\;g^i_{j_1j_2},\; g^i_{j_1j_2,j_3}\,)
\end{equation}
in the standard coordinates.
Then
\begin{multline*}
 w=\bigl[ j_p^3Z,[j_p^4X, j_p^4Y]\bigr]\\
 =Z^k(\,g^i_{jk},\;-g^r_{j_1j_2}h^i_{r,k}+h^r_{j_1,k}g^i_{rj_2}+h^r_{j_2,k}g^i_{rj_1}
       +g^i_{j_2j_1k} \,),
\end{multline*}
\begin{multline*}
  [j_p^2U,w]=U^mZ^k(\,g^i_{mk},\;
   h^r_{j,m}g^i_{rk}-g^r_{jk}h^i_{r,m}\\
   -g^r_{jm}h^i_{r,k}+h^r_{j,k}g^i_{rm}+h^r_{m,k}g^i_{rj}+g^i_{jmk}\,)
\end{multline*}
and
\begin{multline}\label{omg03}
  [j_p^2U,w]-j_p^1\tilde Z=U^mZ^k(\,h^r_{j,m}g^i_{rk}-g^r_{jk}h^i_{r,m}-h^i_{j,r}g^r_{mk}\\
   -g^r_{jm}h^i_{r,k}+h^r_{j,k}g^i_{rm}+h^r_{m,k}g^i_{rj}+g^i_{jmk}\,).
\end{multline}
Taking into account that $h^i_{j,k}=h^i_{k,j}$ for all $i,j,k$, we get that the expression
$$
  h^r_{j,m}g^i_{rk}-g^r_{jk}h^i_{r,m}-g^r_{mk}h^i_{r,j}
   -g^r_{jm}h^i_{r,k}+h^r_{j,k}g^i_{rm}+h^r_{m,k}g^i_{rj}+g^i_{jmk}.
$$
is symmetric over the indexes $j$, $k$ and $m$. This completes the proof.

\subsection{The proof of Theorem \ref{ThrmOmg3H}}\label{SctnThrmOmg3H}
We calculate the tensor $\omega_H^3$ in the standard coordinates to prove the theorem. To this end, we carry out the following steps: 
\begin{enumerate}
 \item Calculate components $h^i_{j,k}$ of a horizontal subspace $H$ satisfying property \eqref{HS3}.
 \item Calculate the components $g^i_{j_1j_2}$ and $g^i_{j_1j_2,j_3}$ of $[j_p^4X, j_p^4Y]$, where\linebreak
  $j_p^4X, j_p^4Y\in H$.
 \item Calculate the components $t^i_{jmk}$ of the tensors $t_H$ and the components $\tilde t^i_{jmk}$ of the    
  tensors $\tilde\mu(t_H)$.
 \item Finally, calculate components of the tensor $\omega_H^3$.
\end{enumerate}

Recall that $\theta_3\in(\pi_{3,2})^{-1}(\Orb_2^0)$. Therefore $\bigl(F^1(\theta_2),F^2(\theta_2)\bigr)\neq 0$ for $\theta_2=\pi_{3,2}(\theta_3)$. 

In this proof, we suppose that $F^1(\theta_2)\neq 0$. For the case $F^2(\theta_2)\neq 0$, the proof is the same. We omit it. 

Step (1). Let $H$ be an arbitrary horizontal subspace of $\A_{\theta_3}$ satisfying \eqref{HS3}. Then it follows from \eqref{IstrSpc3} that the components $h^i_{j,k}$ of $H$ are defined by the system of equations
$$
  \begin{aligned}
     &h^i_{j,k}=h^i_{k,j}\quad\forall\, i,j,k,\\
     &2F^1h^1_{1,1}+F^2h^2_{1,1}+F^1h^2_{1,2}=-D_1F^1,\\
     &2F^1h^1_{1,2}+F^2h^2_{1,2}+F^1h^2_{2,2}=-D_2F^1,\\
     &F^2h^1_{1,1}+F^1h^1_{1,2}+2F^2h^2_{1,2}=-D_1F^2,\\
     &F^2h^1_{1,2}+F^1h^1_{2,2}+2F^2h^2_{2,2}=-D_2F^2.
  \end{aligned}
$$
From this system, we get
\begin{equation}\label{HS13}
 \begin{aligned}
  h^1_{1,2}&=h^1_{2,1}=\bigl(2h^2_{1,1}(F^2)^2 + 3h^1_{1,1}F^1F^2 - F^1F^2_x + 2F^1_xF^2\bigr)/(F^1)^2,\\
  h^1_{2,2}&=\bigl(4h^2_{1,1}(F^2)^3 + 5h^1_{1,1}F^1(F^2)^2\\ 
           &\quad - (F^1)^2F^2_y + 2F^1F^1_yF^2 - 3F^1F^2F^2_x + 4F^1_x(F^2)^2\bigr)/(F^1)^3,\\
  h^2_{1,2}&=h^2_{2,1}=\bigl( - h^2_{1,1}F^2 - 2h^1_{1,1}F^1 - F^1_x\bigr)/F^1,\\
  h^2_{2,2}&=\bigl( - 3h^2_{1,1}(F^2)^2 - 4h^1_{1,1}F^1F^2\\ 
           &\quad - F^1F^1_y + 2F^1F^2_x - 3F^1_xF^2\bigr)/(F^1)^2.
 \end{aligned}
\end{equation}
Here the components $h^1_{1,1}$ and $h^2_{1,1}$ are arbitrary. They define the arbitrariness in the choice of a horizontal subspace $H$ satisfying property \eqref{HS3}.

Step (2). It is clear from the construction of the invariant $\omega^2$ that the components $g^i_{jk}$ in \eqref{BrHr3}, which are symmetric over the indexes $j$ and $k$, are defined by
\begin{equation}\label{gijk}
 \begin{aligned} 
  g^1_{12}&=F^2(\theta_2)\lambda,\quad g^2_{12}=F^1(\theta_2)\lambda,\quad\lambda=X^1Y^2-X^2Y^1,\\
  g^1_{11}&=2g^2_{12},\quad g^2_{22}=2g^1_{12},\quad g^1_{22}=0,\quad g^2_{11}=0.
 \end{aligned}
\end{equation}
It follows from \eqref{FstPrlng1} and the system of equation defining $\g_{\theta_1}$, see Proposition \ref{IstrpAlgkPr}, that the components $g^i_{j_1j_2j_3}$ in \eqref{BrHr3}, which are symmetric over the indexes $j_1$, $j_2$, and $j_3$, are defined by the equations
\begin{equation}\label{gij1j2j3}
 \begin{aligned}
  &g^1_{111}=-3F^2u^0\lambda,\;g^1_{112}=-F^2u^1\lambda,\;g^1_{122}=-F^2u^2\lambda,\;g^1_{222}=-3F^2u^3\lambda,\\
  &g^2_{111}=3F^1u^0\lambda,\phantom{-}\;g^2_{112}=F^1u^1\lambda,\phantom{-}\;g^2_{122}=F^1u^2\lambda,\phantom{-}\;   
  g^2_{222}=3F^1u^3\lambda\,.
 \end{aligned}
\end{equation}

Step (3). Substituting \eqref{HS13}, \eqref{gijk} and \eqref{gij1j2j3} in \eqref{omg03}, we obtain the components $t^i_{jmk}$ of the tensor $t_H$:
\begin{align*}
t^1_{111}&=3\lambda F^2(h^2_{11} - u^0),\\ 
t^1_{112}&=t^1_{121}=t^1_{211}=\lambda F^2( - 2h^2_{11}F^2 - 5h^1_{11}F^1 - F^1u^1 - 2F^1_x)/F^1,\\ 
t^1_{122}&=t^1_{212}=t^1_{221}=\lambda F^2\bigl( - 7h^2_{11}(F^2)^2 - 10h^1_{11}F^1F^2 - (F^1)^2u^2 - F^1F^1_y\\ 
         &\phantom{=t^1_{212}=t^1_{221}=\lambda F^2\bigl( }+ 4F^1F^2_x - 7F^1_xF^2\bigr)/\bigl((F^1)^2\bigr),\\
t^1_{222}&=3\lambda F^2\bigl( - 4h^2_{11}(F^2)^3 - 5h^1_{11}F^1(F^2)^2 - (F^1)^3u^3 + (F^1)^2F^2_y\\ 
         &\phantom{=3\lambda F^2\bigl( }- 2F^1F^1_yF^2 + 3F^1F^2F^2_x - 4F^1_x(F^2)^2\bigr)/\bigl((F^1)^3\bigr),\\
t^2_{111}&=3\lambda F^1( - h^2_{11} + u^0),\\
t^2_{112}&=t^2_{121}=t^2_{211}=\lambda (2h^2_{11}F^2 + 5h^1_{11}F^1 + F^1u^1 + 2F^1_x),\\
t^2_{122}&=t^2_{212}=t^2_{221}=\lambda \bigl(7h^2_{11}(F^2)^2 + 10h^1_{11}F^1F^2 + (F^1)^2u^2 + F^1F^1_y\\ 
         &\phantom{t^2_{212}=t^2_{221}=\lambda \bigl(}- 4F^1F^2_x + 7F^1_xF^2\bigr)/F^1,\\
t^2_{222}&=3\lambda\bigl(4h^2_{11}(F^2)^3 + 5h^1_{11}F^1(F^2)^2 + (F^1)^3u^3 - (F^1)^2F^2_y\\ 
         &\phantom{=3\lambda\bigl(} + 2F^1F^1_yF^2 - 3F^1F^2F^2_x + 4F^1_x(F^2)^2\bigr)/\bigl((F^1)^2\bigr).
\end{align*}
Now we obtain the components $\tilde t^i_{jmk}$ of the tensors $\tilde\mu(t_H)$:
\begin{align*}
\tilde t^1_{121}&=\lambda\bigl(5h^2_{11}(F^2)^2 + 5h^1_{11}F^1F^2 + (F^1)^2u^2 + F^1F^1_y - F^1F^2u^1\\
                &\phantom{=\lambda\bigl(}- 4F^1F^2_x + 5F^1_xF^2\bigr)/(3F^1),\\
\tilde t^1_{122}&=\lambda\bigl(5h^2_{11}(F^2)^3 + 5h^1_{11}F^1(F^2)^2 + 3(F^1)^3u^3 - (F^1)^2F^2u^2 - 3(F^1)^2F^2_y\\ 
                &\phantom{=\lambda\bigl(}+ 5F^1F^1_yF^2 - 5F^1F^2F^2_x + 5F^1_x(F^2)^2\bigr)/\bigl(3(F^1)^2\bigr),\\
\tilde t^2_{121}&=\lambda(5h^2_{11}F^2 + 5h^1_{11}F^1 + F^1u^1 + 2F^1_x - 3F^2u^0)/3,\\
\tilde t^2_{122}&=\lambda\bigl(5h^2_{11}(F^2)^2 + 5h^1_{11}F^1F^2 + (F^1)^2u^2 + F^1F^1_y - F^1F^2u^1\\ 
                &\phantom{=\lambda\bigl(}- 4F^1F^2_x + 5F^1_xF^2\bigr)/(3F^1),\\
\tilde t^1_{121}&=\tilde t^1_{211},\quad\tilde t^1_{122}=\tilde t^1_{212},\quad
\tilde t^2_{121}=\tilde t^2_{211},\quad\tilde t^2_{122}=\tilde t^2_{212},\\
\tilde t^1_{111}&=2\tilde t^2_{121},\quad\tilde t^1_{112}=2\tilde t^2_{122},\quad
\tilde t^2_{221}=2\tilde t^1_{121},\quad\tilde t^2_{222}=2\tilde t^1_{122},\\
\tilde t^1_{221}&=\tilde t^1_{222}=\tilde t^2_{111}=\tilde t^2_{112}=0.
\end{align*}

Step (4). Taking into account expression \eqref{beta2} of $\beta_{\theta_2}$, we obtain the components $(\omega^3_H)^i_{jk}$ 
of the tensor $\omega^3_H=\beta_{\theta_2}\kch\tilde\mu(t_H)$:
\begin{align*}
(\omega^3_H)^1_{12}&=\lambda^3\bigl( - 3(F^1)^2u^3 + 2F^1F^2u^2 - (F^2)^2u^1\\ 
                   &\phantom{=\lambda^3\bigl( - 3(F^1)^2u^3 + 2F^1F^2u^2 }+ 3F^1F^2_y - 4F^1_yF^2 + F^2F^2_x\bigr)/3,\\
(\omega^3_H)^2_{12}&=\lambda^3\bigl( - (F^1)^2u^2 + 2F^1F^2u^1 - 3(F^2)^2u^0\\ 
                   &\phantom{=\lambda^3\bigl( - (F^1)^2u^2 + 2F^1F^2u^1 }- F^1F^1_y + 4F^1F^2_x - 3F^1_xF^2\bigr)/3,\\
(\omega^3_H)^1_{11}&=2(\omega^3_H)^2_{12},\quad(\omega^3_H)^2_{22}=2(\omega^3_H)^1_{12},\quad
(\omega^3_H)^1_{22}=(\omega^3_H)^2_{11}=0.
\end{align*}
We get that the components of $\omega^3_H$ are independent of the choice of a horizontal subspace $H$ satisfying \eqref{HS3}. This completes the proof.


%

\begin{thebibliography}{99}
\bibitem{ALV} D.V.Alekseevskiy, V.V.Lychagin, A.M.Vinogradov, {\it Fundamental ideas and conceptions of differential 
  geometry, Sovremennye problemy matematiki. Fundamental'nye napravleniy}, Vol. 28 (Itogi nauki i techniki, VINITI, AN  
  SSSR, Moscow, 1988 (Russian)) [English transl.: Encyclopedia of Math. Sciences, Vol.28 (Springer, Berlin, 1991)]
\bibitem{Arnld} V.I.Arnold, {\it Advanced chapters of the theory of ordinary differential equations}, Nauka, Moskow, 
  1978 (in Russian).
\bibitem{BbchBrdg} M.V. Babich and L.A. Bordag, {\it Projective differential geometrical structure of the Painleve 
  equations}, J. Dif. Equations, 1999, V.157, No.2, pp.452-485.
\bibitem{BrnshtnRznfld} I.N.Bernshteyn, B.I.Rozenfel'd, {\it Homogeneous spaces of infinite dimensional Lie algebras and
  characteristic classes of foliations}, Uspekhi Matematicheskikh Nauk, vol 28, No. 4, pp. 103-138, 1973. (in Russian)
\bibitem{Chrn} S.S. Chern, {\it Projective geometry, contact transformations, and $CR$-structures},
  1982, Arch. Math. Vol. 38, pp. 1 - 5.
\bibitem{Crtn} E. Cartan, {\it Sur les varietes a connexion projective}, Bull. Soc. Math. France 52 (1924), 205 -- 241.
\bibitem{Grdnr} R.B. Gardner, {\it Differential geometric methods interacting control theory}, 
  in "Differential geometry control theory" (R.W. Brockett et al., Eds.), pp.117-180, Birkhauser, Boston, 1983.
\bibitem{GrssmThmpsnWlkns} C. Grissom, G. Thompson, and G. Wilkens, {\it Linearization of second order ordinary differential    equations via Cartan's equivalence method}, J. Differential Equations, 1989, Vol. 77, pp.1--15.
\bibitem{GllmnStrnbrg1} V.Guillemin, S.Sternberg, {\it An algebraic model of transitive differential geometry}, Bull. 
  Amer. Math. Soc., vol. 70, (1964), pp. 16-47.
\bibitem{GllmnStrnbrg2} V.Guillemin, S.Sternberg, {\it Deformation theory of pseudogroup structures}, 
  AMS, No. 64, 1966, pp. 1-80.
\bibitem{GYum} V.N.Gusyatnikova, V.A.Yumaguzhin, {\it Point transformations and linearisability of 2-order ordinary
  differential equations}, Matemeticheskie Zametki Vol. 49, No. 1,
  pp. 146 - 148, 1991 (in Russian).
\bibitem{HsKmrn} L. Hsu and N. Kamran, {\it Classification of second-order ordinary 
  differential equations admitting Lie groups of fiber-preserving point 
  symmetries}, Proc. London Math. Soc., (3), 58(1989), 387-416
\bibitem{KmLmSh} N. Kamran, K.G. Lamb, and W.F. Shadwick, {\it The local equivalence problem for  
  $d^2y/dx^2=F(x,y,dy/dx)$ and the Painleve transcendents}, J. Dif. Geometry, 22, 1985, 139-150.
\bibitem{KLV} I.S.Krasil'shchik, V.V.Lychagin, A.M.Vinogradov, {\it Geometry of Jet Spaces and Nonlinear Partial   
  Differential Equations}, Gordon and Breach, New York, 1986.
\bibitem{KV} I.S. Krasil'shchik and A.M. Vinogradov, Editors, {\it Symmetries and conservation laws for differential    
  equations of mathematical Physics}, Translations of Mathematical Monographs. Vol.182, Providence RI: American   
  Mathematical Society, 1999.
\bibitem{Krnsh} M.Kuranishi, {\it Lectures on involutive systems of partial differential equations}, S\~ao Paulo, 1967.
\bibitem{Li1} S. Lie, {\it Vorlesungen \"uber continuierliche gruppen}, Teubner, Leipzig, 1893.
\bibitem{Li2} S. Lie, {\it Theorie der transformationsgruppen}, Vol. III, Teubner, Leipzig, 1930.
\bibitem{Lvll} R. Liouville, {\it Sur les invariantes de certaines equationes differentielles}, 
   Jour. de l'Ecole Politechnique, 59 (1889) 7--88.
\bibitem{SngrStrnbrg} I.M. Singer, S. Sternberg, {\it On the infinite groups of Lie and Cartan,I}, 
  J. Analyse Math. Vol. 15,  pp. 1-114, 1965.
\bibitem{Strnbrg} S. Sternberg, {\it Lectures on Differential Geometry}, New Jersy, Prentice-Hall, Inc., 1964.
\bibitem{Thmsn} G. Thomsen, {\it Uber die topologischen Invarianten der Dif\-fe\-ren\-ti\-alg\-lei\-chung
  $y''=f(x,y)y'\,^3+g(x,y)y'\,^2+h(x,y)y'+k(x,y)$}, Abh. Math. Sem. Hamburg. Univ. 7 (1930), 301-328.
\bibitem{Trss} A. Tresse, {\it Sur les invariants differentiels des groupes continus de transformations}, 
  Acta Math. 18 (1894), 1-88.
\bibitem{Thmpsn} G. Thompson, {\it Cartan's method of equivalence and second-order equation fields}, Letter to the  
  editor, J. Phys. A.: Math. Gen. 18 (1985), L1009-L1015.
\bibitem{Vngrdv} A.M. Vinogradov, {\it Scalar differential invariants, diffieties and characteristic classes}, in: 
  Mechanics, Analysis and Geometry: 200 Years after Lagrange, ed. M. Francaviglia (North-Holland), pp.379--414, 1991.
\bibitem{Yum} V.A.Yumaguzhin, {\it On the obstruction to linearizability of 2-order ordinary differential equations},
  Acta Applicandae Mathematicae, Vol. 83, No. 1-2, 2004. pp.133-148. 
  arXiv:0804.0306
\end{thebibliography}
\end{document}